\newcommand{\Cov}{\mathbb{C}\mathrm{ov}}
\newcommand{\e}{\mathrm{e}}
\newcommand{\dif}{\mathrm{d}}
\newcommand{\leb}{\mathrm{L}}
\theoremstyle{plain}
\newtheorem{theorem}{Theorem}[section] 
\newtheorem{lemma}[theorem]{Lemma} 
\newtheorem{remark}[theorem]{Remark}
\newtheorem{notation}[theorem]{Notation}
\newtheorem{example}[theorem]{Example}
\numberwithin{equation}{section} 
\title[\hfill\protect\parbox{0.95\linewidth}{~~On the approximation of the probability density function of the \\randomized non-autonomous complete linear differential equation}]{On the approximation of the probability density function of the randomized non-autonomous complete linear differential equation}
\author{J. Calatayud, J.-C. Cort\'{e}s, M. Jornet} 
\begin{document}

\maketitle

\begin{center}
\noindent
\address{Instituto Universitario de Matem\'{a}tica Multidisciplinar,\\
Universitat Polit\`{e}cnica de Val\`{e}ncia,\\
Camino de Vera s/n, 46022, Valencia, Spain\\
email: jucagre@alumni.uv.es; jcortes@imm.upv.es; marcjor@alumni.uv.es
}
\end{center}

\begin{abstract}
In this paper we study the randomized non-autonomous complete linear differential equation. The diffusion coefficient and the source term in the differential equation are assumed to be stochastic processes and the initial condition is treated as a random variable on an underlying complete probability space. The solution to this random differential equation is a stochastic process. Any stochastic process is determined by its finite-dimensional joint distributions. In this paper, the main goal is to obtain the probability density function of the solution process (the first finite-dimensional distribution) under mild conditions. The solution process is expressed by means of Lebesgue integrals of the data stochastic processes, which, in general, cannot be computed in an exact manner, therefore approximations for its probability density function are constructed. The key tools applied to construct the approximations are the Random Variable Transformation technique and Karhunen-Loève expansions. Our results can be applied to a large variety of examples. Finally, several numerical experiments illustrate the potentiality of our findings. \\
\\
\textit{Keywords:} Stochastic calculus, Random non-autonomous complete linear differential equation, Random Variable Transformation technique, Karhunen-Lo\`{e}ve expansion, Probability density function, Numerical simulations.

\end{abstract}

\section{Introduction and motivation}

The behaviour of physical phenomena is often governed by chance and it does not follow strict deterministic laws. Specific examples can be found in many areas, for example,  in Thermodynamics, the analysis of crystal lattice dynamics in which there is a small percentage of small atoms, randomness arises since atom masses may take two or more values according to probabilistic laws \cite{Ebeling_termodinamica}; in analysis of free vibrations in Mechanics, the heterogeneity in material may be very complicated, so it may be more suitable to  model it via appropriate random fields \cite{Wirsching_vibraciones}; in Epidemiology, the rate of transmission  of a disease depends upon very complex factors, such as genetic, weather, geography, etc., that can be better described using adequate probabilistic distributions rather than deterministic tools, etc.  \cite{Linda_Allen}. As a consequence, it is reasonable to deal with mathematical models considering randomness in their formulation. Motivated by this fact, i.e. the random nature involved in numerous physical phenomena together with ubiquity of deterministic  differential equations to formulate classical laws in Thermodynamics, Mechanics, Epidemiology, etc., it is natural to  randomize  classical differential equations  to describe  mathematical models. This approach leads to the areas of Stochastic Differential Equations (SDEs)  and  Random Differential Equations (RDEs). In the former case, differential equations are forced by stochastic processes having an irregular sample behaviour (e.g., nowhere differentiable) such as the Wiener process or Brownian motion. SDEs are written as Itô or Stratonovich integrals rather than in their differential form. This approach permits dealing with uncertainty via very irregular stochastic processes like white noise, but assuming specific probabilistic properties (Gaussianity, independent increments, stationarity, etc.). The rigorous treatment of SDEs requires a special stochastic calculus, usually referred to as Itô calculus, whose cornerstone result is Itô's Lemma \cite{Oksendal, Kloeden_Platen}. While RDEs are those in which random effects are directly manifested in their input data (initial/boundary conditions, source term and coefficients) \cite{Soong, llibre_smith}. An important advantage of RDEs is that a wider range of probabilistic patterns are allowed for input data like Beta, Gamma, Gaussian distributions (including Brownian motion), but not white noise. Furthermore, the analysis of RDEs takes advantage of classical calculus where powerful tools are available \cite{Soong}. When dealing with both SDEs and RDEs, the main goals are to compute, exact or numerically, the solution stochastic process, say $x(t)$, and its main statistical functions (mostly mean, $\mathbb{E}[x(t)]$, and variance, $\mathbb{V}[x(t)]$).  A more ambitious target is to compute its $n$-dimensional probability density distribution, $f_n(x_1,t_1;\ldots;x_n,t_n)$, whenever it exists, which gives the probability density distribution of the random vector $(x(t_1),\ldots,x(t_n))$, i.e., the joint distribution of the solution at $n$ arbitrary time instants $t_i$, $1\leq i \leq n$ \cite[p.34-36]{Soong}. This is generally a very difficult goal, and in practice significant efforts have been made to compute just the first probability density function, $f_1(x,t)$, since from it all one-dimensional statistical moments of the stochastic process $x(t)$ can be derived using the fact that
\[
\mathbb{E}[(x(t))^k]
=
\int_{-\infty}^{\infty}
x^k f_1(x,t)\, \dif x,\qquad k=1,2,\ldots.
\]
This permits computing the variance, skewness, kurtosis, etc., as well as determining the probability that the solution lies in a specific interval of interest
\[
\mathbb{P}[a\leq x(t) \leq b]
=
\int_{a}^{b}
 f_1(x,t)\, \dif x ,
\]
for every time instant $t$.

In the context of SDEs, it is known  that  $f_n(x_1,t_1;\ldots;x_n,t_n)$ satisfies a Fokker-Planck partial differential equation that needs to be solved, exact or numerically  \cite{Oksendal, Kloeden_Platen}, while Random Variable Transformation technique \cite[ch.~6]{Soong}, \cite[ch.~5 and ch.~6]{Papoulis_Pillai} stands out as a powerful strategy to address this problem in the framework of RDEs. Here, we restrict  our analysis to an important class of RDEs taking advantage of the Random Variable Transformation technique. This method  has been successfully applied to study significant random ordinary differential equations \cite{Dorini_CNSNS_2016,JC_CNSNS_2014}  and random partial differential equations \cite{Dorini_JCP,Selim_AMC_2011,Selim_EPJP_2015,Selim_2013_JQSRT,Xu_AMM_2016}, that appear in different areas such as Epidemiology, Physics, Engineering, etc.  In all these  interesting contributions, uncertainty is considered via random variables rather than stochastic processes. From a mathematical standpoint, this fact restricts the generality of previous contributions.   

This paper is devoted to computing approximations of the first probability density function to the random non-autonomous complete linear differential equation under very general hypotheses. As we shall see later, our approach takes advantage of the aforementioned Random Variable Transformation technique together with Karhunen-Loève expansion \cite[ch.~5]{llibre_powell}. It is important to point out that  this fundamental problem has not been addressed in its general formulation yet. Indeed, to the best of our knowledge, the computation of the first probability density function of the solution of random first and second-order linear differential equations  has been recently tackled in \cite{AAA_RODE_Lineal_1} and \cite{Mediterranean_Lineal_2}, respectively, but only in the case that coefficients are random variables rather than stochastic processes. Recently, the results \cite{AAA_RODE_Lineal_1} have been extended to random first-order linear systems in \cite{SRODE_Romanian} but assuming that coefficients  are random variables.   

For the sake of clarity, first we introduce some notations and results that will be required throughout the paper. We will also establish some results based upon the sample approach to the random non-autonomous non-homogeneous linear differential equation that are aimed to complete our analysis. 

Consider the non-autonomous complete linear differential equation 
\begin{equation}
 \begin{cases} x'(t)=a(t)x(t)+b(t),\;t\in [t_0,T], \\ x(t_0)=x_0, \end{cases} 
\label{edo_det}
\end{equation}
where $a(t)$ is the diffusion coefficient, $b(t)$ is the source term and $x_0$ is the initial condition. The formal solution to this Cauchy problem is given by
\begin{equation}
 x(t)=x_0\,\e^{\int_{t_0}^t a(s)\,\dif s}+\int_{t_0}^t b(s)\,\e^{\int_s^t a(r)\,\dif r}\,\dif s, 
\label{sol_det}
\end{equation}
where the integrals are understood in the Lebesgue sense.

Now we consider (\ref{edo_det}) in a random setting, meaning that we are going to work on an underlying complete probability space $(\Omega,\mathcal{F},\mathbb{P})$, where $\Omega$ is the set of outcomes, that will be generically denoted by $\omega$, $\mathcal{F}$ is a $\sigma$-algebra of events and $\mathbb{P}$ is a probability measure. We shall assume that the initial condition $x_0(\omega)$ is a random variable and 
\[ a=\{a(t,\omega):\,t_0\leq t\leq T,\,\omega\in\Omega\},\quad b=\{b(t,\omega):\,t_0\leq t\leq T,\,\omega\in\Omega\} \] are stochastic processes defined in $(\Omega,\mathcal{F},\mathbb{P})$. In this way, the formal solution to the randomized initial value problem (\ref{edo_det}) is given by the following stochastic process:
\begin{equation}
 x(t,\omega)=x_0(\omega)\,\e^{\int_{t_0}^t a(s,\omega)\,\dif s}+\int_{t_0}^t b(s,\omega)\,\e^{\int_s^t a(r,\omega)\,\dif r}\,\dif s, 
\label{sol_ale}
\end{equation}
where the integrals are understood in the Lebesgue sense. 

\begin{notation}
Throughout this paper we will work with Lebesgue spaces. Remember that, if $(S,\mathcal{A},\mu)$ is a measure space, we denote by $\leb^p(S)$ or $\leb^p(S,\dif\mu)$ ($1\leq p<\infty$) the set of measurable functions $f:S\rightarrow\mathbb{R}$ such that $\|f\|_{\leb^p(S)}=(\int_S |f|^p\,\dif \mu)^{1/p}<\infty$. We denote by $\leb^\infty(S)$ or $\leb^\infty(S,\dif\mu)$ the set of measurable functions such that $\|f\|_{\leb^\infty(S)}=\inf\{\sup\{|f(x)|:\,x\in S\backslash N\}:\,\mu(N)=0\}<\infty$. We write a.e. as a short notation for ``almost every'', which means that some property holds except for a set of measure zero.

Here, we will deal with several cases: $S=\mathcal{T}\subseteq\mathbb{R}$ and $\dif\mu=\dif x$ the Lebesgue measure, $S=\Omega$ and $\mu=\mathbb{P}$ the probability measure, and $S=\mathcal{T}\times\Omega$ and $\dif\mu=\dif x\times \dif\mathbb{P}$. Notice that $f\in \leb^p(\mathcal{T}\times\Omega)$ if and only if $\|f\|_{\leb^p(\mathcal{T}\times \Omega)}=(\mathbb{E}[\int_{\mathcal{T}} |f(x)|^p\,\dif x])^{1/p}<\infty$. In the particular case of $S=\Omega$ and $\mu=\mathbb{P}$, the short notation a.s. stands for ``almost surely''.

In this paper, an inequality related to Lebesgue spaces will be frequently used. This inequality is well-known as the generalized Hölder's inequality, which says that, for any measurable functions $f_1,\ldots,f_m$,
\begin{equation}
 \|f_1\cdots f_m\|_{\leb^1(S)}\leq \|f_1\|_{\leb^{r_1}(S)}\cdots \|f_m\|_{\leb^{r_m}(S)}, 
 \label{Holder}
\end{equation}
where
\begin{equation}
 \frac{1}{r_1}+\cdots+\frac{1}{r_m}=1,\quad 1\leq r_1,\ldots,r_m\leq\infty. 
 \label{coefHolder}
\end{equation}
When $m=2$, inequality (\ref{Holder})-(\ref{coefHolder}) is simply known as Hölder's inequality. When $m=2$, $r_1=2$ and $r_2=2$, inequality (\ref{Holder})-(\ref{coefHolder}) is termed Cauchy-Schwarz inequality.
\end{notation}

For the sake of completeness, we establish under which hypotheses on the data stochastic processes $a$ and $b$ and in what sense the stochastic process given in (\ref{sol_ale}) is a rigorous solution to the randomized problem (\ref{edo_det}).

\begin{remark}
To better understand the computations in the proof of Theorem \ref{absc}, let us recall some results that relate differentiation and Lebesgue integration. Recall that a function $f:[T_1,T_2]\rightarrow\mathbb{R}$ belongs to $\mathrm{A.C}([T_1,T_2])$ ($\mathrm{A.C}$ stands for absolutely continuous) if there exists its derivative $f'$ at a.e. $x\in [T_1,T_2]$, $f'\in \leb^1([T_1,T_2])$ and $f(x)=f(T_1)+\int_{T_1}^x f'(t)\,\dif t$ for all $x\in [T_1,T_2]$ (i.e., $f$ satisfies the Fundamental Theorem of Calculus for Lebesgue integration). Equivalently, $f\in\mathrm{A.C}([T_1,T_2])$ if for all $\epsilon>0$ there exists a $\delta>0$ such that, if $\{(x_k,y_k)\}_{k=1}^m$ is any finite collection of disjoint open intervals in $[T_1,T_2]$ with $\sum_{k=1}^m (y_k-x_k)<\delta$, then $\sum_{k=1}^m |f(y_k)-f(x_k)|<\epsilon$. Equivalently, $f\in\mathrm{A.C}([T_1,T_2])$ if there exists $g\in\leb^1([T_1,T_2])$ such that $f(x)=f(T_1)+\int_{T_1}^x g(t)\,\dif t$ for all $x\in [T_1,T_2]$. In such a case, $g=f'$ almost everywhere on $[T_1,T_2]$. For more on these statements, see \cite[p.129]{lahiri} \cite[Th.2, p.67]{weir}. 

Two results concerning absolute continuity will be used:
\begin{itemize}
\item[i)] If $f\in\mathrm{A.C}([T_1,T_2])$, then $\e^f\in \mathrm{A.C}([T_1,T_2])$ (use the Mean Value Theorem to $\e^f$ and the $\epsilon$-$\delta$ definition for absolute continuity).
\item[ii)] The product of absolutely continuous functions is absolutely continuous.
\end{itemize}
\end{remark}

\begin{theorem} \label{absc}
If the data processes $a$ and $b$ of the randomized initial value problem (\ref{edo_det}) satisfy $a(\cdot,\omega),b(\cdot,\omega)\in \leb^1([t_0,T])$ for a.e. $\omega\in\Omega$, then the solution process $x(t,\omega)$ given in (\ref{sol_ale}) has absolutely continuous sample paths on $[t_0,T]$, $x(t_0,\omega)=x_0(\omega)$ a.s. and, for a.e. $\omega\in\Omega$, $x'(t,\omega)=a(t,\omega)x(t,\omega)+b(t,\omega)$ for a.e. $t\in [t_0,T]$. Moreover, this is the unique absolutely continuous process being a solution.

On the other hand, if $a$ and $b$ have continuous sample paths on $[t_0,T]$, then $x(t,\omega)$ has $C^1([t_0,T])$ sample paths and $x'(t,\omega)=a(t,\omega)x(t,\omega)+b(t,\omega)$ for all $t\in [t_0,T]$ and a.e. $\omega\in\Omega$. Moreover, this is the unique differentiable process being a solution.
\end{theorem}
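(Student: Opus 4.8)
The plan is to argue pathwise. I would fix an outcome $\omega$ in the full-measure set on which $a(\cdot,\omega),b(\cdot,\omega)\in\leb^1([t_0,T])$, freeze it, and treat (\ref{sol_ale}) as a deterministic function of $t$, so that only the classical theory of absolutely continuous functions recalled in the remark above is needed. The decisive manipulation is to set $A(t,\omega):=\int_{t_0}^t a(r,\omega)\,\dif r$ and to observe that $\int_s^t a(r,\omega)\,\dif r=A(t,\omega)-A(s,\omega)$; this lets me factor the integral term in (\ref{sol_ale}) as
\[
\int_{t_0}^t b(s,\omega)\,\e^{\int_s^t a(r,\omega)\,\dif r}\,\dif s=\e^{A(t,\omega)}\,G(t,\omega),\qquad G(t,\omega):=\int_{t_0}^t b(s,\omega)\,\e^{-A(s,\omega)}\,\dif s,
\]
decoupling the outer variable $t$ from the integration variable $s$.

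For the first assertion I would proceed piece by piece. Since $a(\cdot,\omega)\in\leb^1$, the Fundamental Theorem of Calculus for the Lebesgue integral makes $A(\cdot,\omega)$ absolutely continuous with $A'(\cdot,\omega)=a(\cdot,\omega)$ a.e.; by item i) of the remark $\e^{\pm A(\cdot,\omega)}$ are then absolutely continuous. As $A(\cdot,\omega)$ is continuous on the compact interval $[t_0,T]$, the factor $\e^{-A(\cdot,\omega)}$ is bounded, so $b(\cdot,\omega)\,\e^{-A(\cdot,\omega)}\in\leb^1$ and $G(\cdot,\omega)$ is absolutely continuous with $G'(t,\omega)=b(t,\omega)\,\e^{-A(t,\omega)}$ a.e. The summand $x_0(\omega)\,\e^{A(t,\omega)}$ is absolutely continuous and $\e^{A(t,\omega)}G(t,\omega)$ is a product of absolutely continuous functions, hence absolutely continuous by item ii); thus $x(\cdot,\omega)$ is absolutely continuous. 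Differentiating a.e. by the product rule gives
\[
x'(t,\omega)=a(t,\omega)\,x_0(\omega)\,\e^{A(t,\omega)}+a(t,\omega)\,\e^{A(t,\omega)}G(t,\omega)+\e^{A(t,\omega)}\,b(t,\omega)\,\e^{-A(t,\omega)}=a(t,\omega)\,x(t,\omega)+b(t,\omega)
\]
for a.e. $t$, while $A(t_0,\omega)=G(t_0,\omega)=0$ yields $x(t_0,\omega)=x_0(\omega)$. For uniqueness within this class I would use the integrating factor: if $y(\cdot,\omega)$ is absolutely continuous with $y'=ay+b$ a.e. and $y(t_0,\omega)=x_0(\omega)$, then $z(\cdot,\omega):=y(\cdot,\omega)\,\e^{-A(\cdot,\omega)}$ is absolutely continuous with $z'(t,\omega)=b(t,\omega)\,\e^{-A(t,\omega)}$ a.e., so the Fundamental Theorem of Calculus gives $z(t,\omega)=x_0(\omega)+G(t,\omega)$, i.e. $y(\cdot,\omega)=x(\cdot,\omega)$.

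The second assertion is a regularity upgrade. When $a(\cdot,\omega),b(\cdot,\omega)$ are continuous they are a fortiori in $\leb^1([t_0,T])$, so the above applies; moreover the classical Fundamental Theorem of Calculus now puts $A(\cdot,\omega)$ in $C^1$ with $A'=a$ everywhere, whence $\e^{\pm A(\cdot,\omega)}\in C^1$, the continuous integrand makes $G(\cdot,\omega)\in C^1$, and sums and products of $C^1$ functions are $C^1$; thus $x(\cdot,\omega)\in C^1([t_0,T])$ and $x'=ax+b$ holds for every $t$. Any differentiable solution $y$ has $y'=ay+b$ pointwise with $a,b,y$ continuous, so $y'$ is continuous, $y\in C^1\subseteq\mathrm{A.C}$, and the uniqueness already proved forces $y=x$. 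I expect the only genuinely delicate step to be the absolute continuity of the integral term, and the factorization $\e^{A(t,\omega)}G(t,\omega)$ is precisely what tames the double appearance of $t$---as the upper limit and inside the exponent---reducing matters to the product rule and the Fundamental Theorem of Calculus. A minor separate point is the joint measurability of $(t,\omega)\mapsto x(t,\omega)$ needed for $x$ to be a genuine stochastic process, which should follow routinely from the measurability of $a$ and $b$ together with Fubini's theorem.
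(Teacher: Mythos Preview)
Your proof is correct and follows essentially the same route as the paper: the same factorization $x(t,\omega)=\e^{A(t,\omega)}\bigl(x_0(\omega)+G(t,\omega)\bigr)$, the same appeal to items i) and ii) of the remark, and the same upgrade via the classical Fundamental Theorem of Calculus in the continuous case. The only noteworthy difference is in the uniqueness arguments: the paper invokes Carath\'eodory's existence theorem for the $\leb^1$ case and Picard--Lindel\"of for the continuous case, whereas you give a direct integrating-factor computation (and reduce the $C^1$ case to the absolutely continuous one), which is a little more self-contained but amounts to the same thing.
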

\begin{proof}
Rewrite (\ref{sol_ale}) as
\[ x(t,\omega)=\e^{\int_{t_0}^t a(s,\omega)\,\dif s}\left\{x_0(\omega)+\int_{t_0}^t b(s,\omega)\e^{-\int_{t_0}^s a(r,\omega)\,\dif r}\,\dif s\right\}. \]

Since $a(\cdot,\omega)\in \leb^1([t_0,T])$, the function $\int_{t_0}^t a(s,\omega)\,\dif s$ belongs to $\mathrm{A.C}([t_0,T])$, for a.e. $\omega\in\Omega$. Therefore, by i), $\e^{\int_{t_0}^t a(s,\omega)\,\dif s}$ belongs to $\mathrm{A.C}([t_0,T])$, for a.e. $\omega\in\Omega$, with derivative $a(t,\omega)\e^{\int_{t_0}^t a(s,\omega)\,\dif s}$. 

On the other hand,
\begin{align*}
|b(s,\omega)|\e^{-\int_{t_0}^s a(r,\omega)\,\dif r}\leq {} & |b(s,\omega)|\e^{\left|\int_{t_0}^s a(r,\omega)\,\dif r\right|}\leq |b(s,\omega)|\e^{\int_{t_0}^s |a(r,\omega)|\,\dif r} \\
\leq {} & |b(s,\omega)|\e^{\int_{t_0}^T |a(r,\omega)|\,\dif r}=|b(s,\omega)|\e^{\|a(\cdot,\omega)\|_{\leb^1([t_0,T])}}\in \leb^1([t_0,T],\dif s),
\end{align*}
for a.e. $\omega\in\Omega$. Then the function $\int_{t_0}^t b(s,\omega)\e^{-\int_{t_0}^s a(r,\omega)\,\dif r}\,\dif s$ belongs to $\mathrm{A.C}([t_0,T])$, for a.e. $\omega\in\Omega$, with derivative $b(t,\omega)\e^{-\int_{t_0}^t a(r,\omega)\,\dif r}$. 

As the product of absolutely continuous functions is absolutely continuous (see ii)), we derive that, for a.e. $\omega\in\Omega$, $x(\cdot,\omega)\in \mathrm{A.C}([t_0,T])$. Moreover, the product rule for the derivative yields, for a.e. $\omega\in\Omega$, $x'(t,\omega)=a(t,\omega)x(t,\omega)+b(t,\omega)$ for a.e. $t\in [t_0,T]$.

For the uniqueness, we apply Carathéodory's Existence Theorem \cite[p.30]{carath}.

If $a$ and $b$ have continuous sample paths on $[t_0,T]$, one has to use the Fundamental Theorem of Calculus for the Riemann integral, instead. The uniqueness comes from the global version of the Picard-Lindelöf Theorem, or, if you prefer, by standard results on the deterministic linear differential equation.
\end{proof}

\section{Obtaining the probability density function of the solution}

The main goal of this paper is, under suitable hypotheses, to compute approximations of the probability density function, $f_1(x,t)$, of the solution stochastic process given in (\ref{sol_ale}), $x(t,\omega)$, for $t\in [t_0,T]$. To achieve this goal, we will use the Karhunen-Loève expansions for both data stochastic processes $a$ and $b$. 

Hereinafter, the operators $\mathbb{E}[\cdot]$, $\mathbb{V}[\cdot]$ and $\Cov[\cdot,\cdot]$ will denote the expectation, the variance and the covariance, respectively. 

We state two crucial results that will be applied throughout our subsequent analysis.

\begin{lemma}[Random Variable Transformation technique] \label{lema_abscont}
Let $X$ be an absolutely continuous random vector with density $f_X$ and with support $D_X$ contained in an open set $D\subseteq \mathbb{R}^n$. Let $g:D\rightarrow\mathbb{R}^n$ be a $C^1(D)$ function, injective on $D$ such that $Jg(x)\neq0$ for all $x\in D$ ($J$ stands for Jacobian). Let $h=g^{-1}:g(D)\rightarrow\mathbb{R}^n$. Let $Y=g(X)$ be a random vector. Then $Y$ is absolutely continuous with density
\begin{equation}
 f_Y(y)=\begin{cases} f_X(h(y))|Jh(y)|,&\;y\in g(D), \\ 0,&\; y\notin g(D). \end{cases}  
\label{dens_lema}
\end{equation}
\end{lemma}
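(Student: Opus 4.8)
The plan is to verify directly that the function $f_Y$ displayed in \eqref{dens_lema} is a density for $Y$, by computing $\mathbb{P}[Y\in B]$ for an arbitrary Borel set $B\subseteq\mathbb{R}^n$ and matching it against $\int_B f_Y(y)\,\dif y$. The single analytic ingredient required is the change-of-variables theorem for the Lebesgue integral; everything else is bookkeeping with the hypotheses.

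First I would record the geometric consequences of the assumptions. Since $g\in C^1(D)$ is injective with $Jg(x)\neq 0$ for every $x\in D$, the Inverse Function Theorem (in its global form, exploiting injectivity) guarantees that $g(D)$ is an open subset of $\mathbb{R}^n$ and that $h=g^{-1}:g(D)\rightarrow D$ is a $C^1$-diffeomorphism. In particular $Jh(y)=1/Jg(h(y))\neq 0$ for all $y\in g(D)$, so $|Jh(y)|$ is well defined and strictly positive, and the right-hand side of \eqref{dens_lema} is a measurable function of $y$.

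Next, fix a Borel set $B\subseteq\mathbb{R}^n$. Because the support $D_X$ of $X$ is contained in $D$, we have $\mathbb{P}[X\in D]=1$, and therefore $g(X)$ is defined almost surely. Using that $g$ maps $D$ bijectively onto $g(D)$, the identity $\{x\in D:\,g(x)\in B\}=h(B\cap g(D))$ holds, whence the event $\{Y\in B\}=\{g(X)\in B\}$ agrees, up to a $\mathbb{P}$-null set, with $\{X\in h(B\cap g(D))\}$. Since $X$ is absolutely continuous with density $f_X$, this yields
\[
\mathbb{P}[Y\in B]=\mathbb{P}[X\in h(B\cap g(D))]=\int_{h(B\cap g(D))} f_X(x)\,\dif x.
\]

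The decisive step is to transform this integral by the substitution $x=h(y)$. Applying the change-of-variables theorem for the Lebesgue integral to the $C^1$-diffeomorphism $h$ on the measurable set $B\cap g(D)$, I obtain
\[
\int_{h(B\cap g(D))} f_X(x)\,\dif x=\int_{B\cap g(D)} f_X(h(y))\,|Jh(y)|\,\dif y=\int_{B} f_Y(y)\,\dif y,
\]
the last equality holding because $f_Y$ vanishes off $g(D)$ by definition. As $B$ was an arbitrary Borel set, this identifies $f_Y$ as the density of $Y$ and shows in particular that $Y$ is absolutely continuous. The main obstacle is purely technical: ensuring that every set appearing is measurable and that the hypotheses of the change-of-variables theorem are genuinely satisfied; once the Inverse Function Theorem supplies the global diffeomorphism $h$, the remaining manipulations are routine.
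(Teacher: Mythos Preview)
Your proof is correct and is precisely the standard argument for the transformation formula: reduce to a $C^1$-diffeomorphism via the Inverse Function Theorem, then apply the change-of-variables theorem for the Lebesgue integral to compute $\mathbb{P}[Y\in B]$ for an arbitrary Borel set. The paper does not supply its own proof of this lemma but simply cites Lemma~4.12 of \cite{llibre_powell}; your argument is essentially what one would find in such a reference.
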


The proof appears in Lemma 4.12 of \cite{llibre_powell}.

\begin{lemma}[Karhunen-Loève Theorem] \label{KLlemma}
Consider a stochastic process $\{X(t):\,t\in\mathcal{T}\}$ in $\leb^2(\mathcal{T}\times\Omega)$. Then
\begin{equation}
 X(t,\omega)=\mu(t)+\sum_{j=1}^\infty \sqrt{\nu_j}\,\phi_j(t)\xi_j(\omega), 
 \label{karh}
\end{equation}
where the sum converges in $\leb^2(\mathcal{T}\times\Omega)$, $\mu(t)=\mathbb{E}[X(t)]$, $\{\phi_j\}_{j=1}^\infty$ is an orthonormal basis of $\leb^2(\mathcal{T})$, $\{(\nu_j,\phi_j)\}_{j=1}^\infty$ is the set of pairs of (nonnegative) eigenvalues and eigenvectors of the operator
\begin{equation}
 \mathcal{C}:\leb^2(\mathcal{T})\rightarrow \leb^2(\mathcal{T}),\quad \mathcal{C}f(t)=\int_{\mathcal{T}} \Cov[X(t),X(s)]f(s)\,\dif s, 
 \label{karhC}
\end{equation}
and $\{\xi_j\}_{j=1}^\infty$ is a sequence of random variables with zero expectation, unit variance and pairwise uncorrelated. Moreover, if $\{X(t):\,t\in\mathcal{T}\}$ is a Gaussian process, then $\{\xi_j\}_{j=1}^\infty$ are independent and Gaussian.
\end{lemma}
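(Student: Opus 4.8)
The plan is to reduce the statement to the spectral theory of the covariance operator $\mathcal{C}$. First I would center the process, setting $Y(t,\omega)=X(t,\omega)-\mu(t)$ with $\mu(t)=\mathbb{E}[X(t)]$, so that $Y$ has zero mean and covariance kernel $c(t,s)=\Cov[X(t),X(s)]=\mathbb{E}[Y(t)Y(s)]$. The first point to establish is that $\mathcal{C}$ is a well-behaved operator on $\leb^2(\mathcal{T})$. By the Cauchy--Schwarz inequality one has $|c(t,s)|^2\leq \mathbb{E}[Y(t)^2]\,\mathbb{E}[Y(s)^2]$, and integrating over $\mathcal{T}\times\mathcal{T}$ shows $c\in\leb^2(\mathcal{T}\times\mathcal{T})$, since $\int_{\mathcal{T}}\mathbb{E}[Y(t)^2]\,\dif t=\|Y\|_{\leb^2(\mathcal{T}\times\Omega)}^2<\infty$ by hypothesis. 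Hence $\mathcal{C}$ is a Hilbert--Schmidt integral operator, in particular compact; it is self-adjoint because $c(t,s)=c(s,t)$, and positive semidefinite because $\langle\mathcal{C}f,f\rangle=\mathbb{E}[(\int_{\mathcal{T}}Y(t)f(t)\,\dif t)^2]\geq 0$.

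By the spectral theorem for compact self-adjoint operators, $\mathcal{C}$ admits a sequence of nonnegative eigenvalues $\{\nu_j\}$ with corresponding eigenfunctions; completing those associated to the closure of the range with an orthonormal basis of $\ker\mathcal{C}$ (all of eigenvalue zero), I would obtain an orthonormal basis $\{\phi_j\}$ of $\leb^2(\mathcal{T})$. For each $j$ with $\nu_j>0$ I then define $\xi_j(\omega)=\nu_j^{-1/2}\int_{\mathcal{T}}Y(t,\omega)\phi_j(t)\,\dif t$. A direct computation, with the interchange of integrals justified by Fubini (legitimate since $Y\in\leb^2(\mathcal{T}\times\Omega)$), gives $\mathbb{E}[\xi_j]=0$ and
\[
\mathbb{E}[\xi_i\xi_j]=\frac{1}{\sqrt{\nu_i\nu_j}}\int_{\mathcal{T}}\phi_i(t)(\mathcal{C}\phi_j)(t)\,\dif t=\frac{\nu_j}{\sqrt{\nu_i\nu_j}}\,\delta_{ij}=\delta_{ij},
\]
so the $\xi_j$ have zero mean, unit variance, and are pairwise uncorrelated.

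For the convergence in $\leb^2(\mathcal{T}\times\Omega)$, I would form the partial sums $S_N(t,\omega)=\mu(t)+\sum_{j=1}^N\sqrt{\nu_j}\,\phi_j(t)\xi_j(\omega)$ and expand the square, using the orthonormality of $\{\phi_j\}$ in $\leb^2(\mathcal{T})$ together with $\mathbb{E}[\xi_i\xi_j]=\delta_{ij}$ and $\mathbb{E}[\xi_j\int_{\mathcal{T}}Y\phi_j\,\dif t]=\sqrt{\nu_j}$. All the cross terms collapse and the error reduces to
\[
\|X-S_N\|_{\leb^2(\mathcal{T}\times\Omega)}^2=\mathbb{E}\Big[\int_{\mathcal{T}}Y(t)^2\,\dif t\Big]-\sum_{j=1}^N\nu_j.
\]
To finish I would apply Parseval's identity in $\leb^2(\mathcal{T})$ for a.e.\ fixed $\omega$ (here one needs $Y(\cdot,\omega)\in\leb^2(\mathcal{T})$ almost surely, which again follows from Fubini) and then integrate over $\Omega$ by Tonelli's theorem, obtaining $\mathbb{E}[\int_{\mathcal{T}}Y^2\,\dif t]=\sum_{j=1}^\infty\mathbb{E}[\langle Y,\phi_j\rangle^2]=\sum_{j=1}^\infty\nu_j$, whence the error tends to zero. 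For the Gaussian case, each $\xi_j$ is an $\leb^2$-limit of linear combinations of the jointly Gaussian family $\{Y(t)\}$, hence the $\xi_j$ are jointly Gaussian; being uncorrelated, they are therefore independent.

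The step I expect to be the main obstacle is precisely this convergence argument, in particular the trace-type identity $\sum_j\nu_j=\mathbb{E}[\int_{\mathcal{T}}Y^2\,\dif t]$. For a merely $\leb^2$ kernel (with no continuity assumed on $\mathcal{T}$), a pointwise Mercer expansion of $c(t,s)$ is not available, so the diagonal $c(t,t)$ cannot be handled directly; the identity must instead be routed through Parseval applied $\omega$-wise and a careful nonnegative interchange of sum and expectation via Tonelli. The surrounding Fubini/Tonelli justifications and the completion of the eigenbasis to cover $\ker\mathcal{C}$ are routine by comparison, but should be stated explicitly.
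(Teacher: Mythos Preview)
The paper does not actually prove this lemma; it simply cites Theorem~5.28 of Lord, Powell, and Shardlow. Your sketch is the standard proof that one finds in such references---compactness and self-adjointness of $\mathcal{C}$ via the Hilbert--Schmidt condition, spectral theorem, definition of $\xi_j$ as normalized projections, and convergence via the trace identity $\sum_j\nu_j=\mathbb{E}\int_{\mathcal{T}}Y^2\,\dif t$---and it is correct. One small point worth adding: you define $\xi_j$ only for $\nu_j>0$; for indices with $\nu_j=0$ the corresponding term in the expansion vanishes regardless, but if you want the full sequence $\{\xi_j\}$ to literally satisfy zero mean, unit variance, and pairwise uncorrelatedness as stated, you should say explicitly that those $\xi_j$ may be chosen arbitrarily subject to these constraints (e.g., as further independent standardized variables).
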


The proof appears in Theorem 5.28 of \cite{llibre_powell}.

\begin{remark}
When the operator $\mathcal{C}$ defined in (\ref{karhC}) has only a finite number of nonzero eigenvalues, then the process $X$ of Lemma \ref{KLlemma} can be expressed as a finite sum:
\[ X(t,\omega)=\mu(t)+\sum_{j=1}^J \sqrt{\nu_j}\,\phi_j(t)\xi_j(\omega). \]
In the subsequent development, we will write the data stochastic processes $a$ and $b$ via their Karhunen-Loève expansions. The summation symbol in the expansion will be always written up to $\infty$ (the most difficult case), although it could be possible that their corresponding covariance integral operators $\mathcal{C}$ have only a finite number of nonzero eigenvalues. In such a case, when we write vectors of the form $(\xi_1,\ldots,\xi_N)$ for $N\geq1$ later on (for instance, see the hypotheses of the forthcoming theorems or the approximating densities (\ref{f1n}), (\ref{f1Nhomo}), etc.), we will interpret that we stop at $N=J$ if $J<\infty$. 
\end{remark}

\bigskip

Suppose that $a,b\in \leb^2([t_0,T]\times\Omega)$. Then, according to Lemma \ref{KLlemma}, we can write their Karhunen-Loève expansion as
\[ a(t,\omega)=\mu_a(t)+\sum_{j=1}^\infty \sqrt{\nu_j}\,\phi_j(t)\xi_j(\omega), \quad b(t,\omega)=\mu_b(t)+\sum_{i=1}^\infty \sqrt{\gamma_i}\,\psi_i(t)\eta_i(\omega), \]
where $\{(\nu_j,\phi_j)\}_{j=1}^\infty$ and $\{(\gamma_i,\psi_i)\}_{i=1}^\infty$ are the corresponding pairs of (nonnegative) eigenvalues and eigenfunctions, $\{\xi_j\}_{j=1}^\infty$ are random variables with zero expectation, unit variance and pairwise uncorrelated, and $\{\eta_i\}_{i=1}^\infty$ are also random variables with zero expectation, unit variance and pairwise uncorrelated. We will assume that both sequences of pairs $\{(\nu_j,\phi_j)\}_{j=1}^\infty$ and $\{(\gamma_i,\psi_i)\}_{i=1}^\infty$ do not have a particular ordering. In practice, the ordering will be chosen so that the hypotheses of the theorems stated later on are satisfied (for example, if we say in a theorem that $\xi_1$ has to satisfy a certain condition, then we can reorder the pairs of eigenvalues and eigenvectors and the random variables $\xi_1,\xi_2,\ldots$ so that $\xi_1$ satisfies the condition).

We truncate the Karhunen-Loève expansions up to an index $N$ and $M$, respectively:
\[ a_N(t,\omega)=\mu_a(t)+\sum_{j=1}^N \sqrt{\nu_j}\,\phi_j(t)\xi_j(\omega), \quad b_M(t,\omega)=\mu_b(t)+\sum_{i=1}^M \sqrt{\gamma_i}\,\psi_i(t)\eta_i(\omega). \]
This allows us to have a truncation for the solution given in (\ref{sol_ale}):
\begin{align*}
 x_{N,M}(t,\omega)= {} & x_0(\omega)\,\e^{\int_{t_0}^t a_N(s,\omega)\,\dif s}+\int_{t_0}^t b_M(s,\omega)\,\e^{\int_s^t a_N(r,\omega)\,\dif r}\,\dif s \\
= {} & x_0(\omega)\,\e^{\int_{t_0}^t \left(\mu_a(s)+\sum_{j=1}^N \sqrt{\nu_j}\,\phi_j(s)\xi_j(\omega)\right)\,\dif s} \\
+ {} & \int_{t_0}^t \left(\mu_b(s)+\sum_{i=1}^M \sqrt{\gamma_i}\,\psi_i(s)\eta_i(\omega)\right)\,\e^{\int_s^t \left(\mu_a(r)+\sum_{j=1}^N \sqrt{\nu_j}\,\phi_j(r)\xi_j(\omega)\right)\,\dif r}\,\dif s.
\end{align*}

For convenience of notation, we will denote (in bold letters) $\pmb{\xi}_N=(\xi_1,\ldots,\xi_N)$ and $\pmb{\eta}_M=(\eta_1,\ldots,\eta_M)$, understanding this as a random vector or as a deterministic real vector, depending on the context. We also denote
\[ K_a(t,\pmb{\xi}_N)=\int_{t_0}^t \left(\mu_a(s)+\sum_{j=1}^N \sqrt{\nu_j}\,\phi_j(s)\xi_j\right)\,\dif s, \]
\[ S_b(s,\pmb{\eta}_M)=\mu_b(s)+\sum_{i=1}^M \sqrt{\gamma_i}\,\psi_i(s)\eta_i. \]
In this way,
\begin{align}
 x_{N,M}(t,\omega)= {} & x_0(\omega)\e^{K_a(t,\pmb{\xi}_N(\omega))}+\int_{t_0}^t S_b(s,\pmb{\eta}_M(\omega))\e^{K_a(t,\pmb{\xi}_N(\omega))-K_a(s,\pmb{\xi}_N(\omega))}\,\dif s  \nonumber \\
= {} & \e^{K_a(t,\pmb{\xi}_N(\omega))}\left\{x_0(\omega)+\int_{t_0}^t S_b(s,\pmb{\eta}_M(\omega))\e^{-K_a(s,\pmb{\xi}_N(\omega))}\,\dif s\right\}. \label{xNM}
\end{align}

We assume that $x_0$ and $(\xi_1,\ldots,\xi_N,\eta_1,\ldots,\eta_M)$ are absolutely continuous and independent, for all $N,M\geq1$. The densities of $x_0$ and $(\xi_1,\ldots,\xi_N,\eta_1,\ldots,\eta_M)$ will be denoted by $f_0$ and $f_{(\xi_1,\ldots,\xi_N,\eta_1,\ldots,\eta_M)}$, respectively.

Under this scenario described, in the following subsections we will analyze how to approximate the probability density function of the solution stochastic process $x(t,\omega)$ given in (\ref{sol_ale}). The key idea is to compute the density function of the truncation $x_{N,M}(t,\omega)$ given in (\ref{xNM}) taking advantage of Lemma \ref{lema_abscont}, and then proving that it converges to a density of $x(t,\omega)$. The following subsections are divided taking into account the way Lemma \ref{lema_abscont} is applied. As it shall be seen later, our approach is based upon which variable is essentially isolated when computing the inverse of the specific transformation mapping that will be chosen to apply Lemma \ref{lema_abscont}. For instance, in Subsection \ref{su1} we will isolate the random variable $x_0$ and in Subsection \ref{su3} we will isolate the random variable $\eta_1$. This permits having different hypotheses under which a density function of $x(t,\omega)$ can be approximated. This approach allows us to achieve a lot of generality in our findings (see Theorem \ref{teor1}, Theorem \ref{teor3}, Theorem \ref{teornou1} and Theorem \ref{teornou3}). We will study the homogeneous and non-homogeneous cases, corresponding to $b=0$ and $b\neq0$, respectively. The particular case $b=0$ permits having interesting and particular results for the random non-autonomous homogeneous linear differential equation (see Subsection \ref{su2}, Subsection \ref{su4} and Subsection \ref{su5}, Theorem \ref{teor2}, Theorem \ref{teorb0L}, Theorem \ref{teornou2}, Theorem \ref{avo} and Theorem \ref{teornoub0L}).

\subsection{Obtaining the density function when \texorpdfstring{$f_0$}{f0} is Lipschitz on $\mathbb{R}$} \label{su1} \ \\
 
Using Lemma \ref{lema_abscont}, we are able to compute the density of $x_{N,M}(t,\omega)$. Indeed, with the notation of Lemma \ref{lema_abscont},
\[ g(x_0,\xi_1,\ldots,\xi_N,\eta_1,\ldots,\eta_M)=\left(\e^{K_a(t,\pmb{\xi}_N)}\left\{x_0+\int_{t_0}^t S_b(s,\pmb{\eta}_M)\e^{-K_a(s,\pmb{\xi}_N)}\,\dif s\right\},\pmb{\xi}_N,\pmb{\eta}_M\right), \]
$D=\mathbb{R}^{N+M+1}$, $g(D)=\mathbb{R}^{N+M+1}$,
\[ h(x_0,\xi_1,\ldots,\xi_N,\eta_1,\ldots,\eta_M)=\left(x_0\e^{-K_a(t,\pmb{\xi}_N)}-\int_{t_0}^t S_b(s,\pmb{\eta}_M)\e^{-K_a(s,\pmb{\xi}_N)}\,\dif s,\pmb{\xi}_N,\pmb{\eta}_M\right) \]
and 
\[ Jh(x_0,\xi_1,\ldots,\xi_N,\eta_1,\ldots,\eta_M)=\e^{-K_a(t,\pmb{\xi}_N)}>0. \]
Then, taking the marginal distributions with respect to $(\pmb{\xi}_N,\pmb{\eta}_M)$ and denoting by $f_1^{N,M}(x,t)$ the density of $x_{N,M}(t,\omega)$, we have
\footnotesize
\[ f_1^{N,M}(x,t)=\int_{\mathbb{R}^{N+M}} f_0\left(x\,\e^{-K_a(t,\pmb{\xi}_N)}-\int_{t_0}^t S_b(s,\pmb{\eta}_M)\e^{-K_a(s,\pmb{\xi}_N)}\,\dif s\right)f_{\pmb{\xi}_N,\pmb{\eta}_M}(\pmb{\xi}_N,\pmb{\eta}_M)\e^{-K_a(t,\pmb{\xi}_N)}\,\dif \pmb{\xi}_N\,\dif \pmb{\eta}_M. \]
\normalsize

In order to be able to compute the limit of $f_1^{N,M}(x,t)$ when $N,M\rightarrow\infty$ easily, without loss of generality we will take $N=M$, so that we work with the density $f_1^N(x,t)$ of $x_{N,N}(t,\omega)$:
\footnotesize
\begin{equation}
 f_1^N(x,t)=\int_{\mathbb{R}^{2N}} f_0\left(x\,\e^{-K_a(t,\pmb{\xi}_N)}-\int_{t_0}^t S_b(s,\pmb{\eta}_N)\e^{-K_a(s,\pmb{\xi}_N)}\,\dif s\right)f_{\pmb{\xi}_N,\pmb{\eta}_N}(\pmb{\xi}_N,\pmb{\eta}_N)\e^{-K_a(t,\pmb{\xi}_N)}\,\dif \pmb{\xi}_N\,\dif \pmb{\eta}_N. 
\label{f1n}
\end{equation}
\normalsize

In the following theorem we establish the hypotheses under which $\{f_1^N(x,t)\}_{N=1}^\infty$ converges to a density of the solution $x(t,\omega)$ given by (\ref{sol_ale}).

\begin{theorem} \label{teor1}
Assume the following four hypotheses:
\begin{align*}
 \text{H1}: {} & \;a,b\in \leb^2([t_0,T]\times\Omega); \\
 \text{H2}: {} & \;x_0 \text{ and }(\xi_1,\ldots,\xi_N,\eta_1,\ldots,\eta_N) \text{ are absolutely continuous and independent, }N\geq1; \\
 \text{H3}: {} & \;\text{the density function of }x_0\text{, }f_0 \text{, is Lipschitz on }\mathbb{R}; \\
 \text{H4}: {} & \;\text{there exist }2\leq p\leq\infty\text{ and }4\leq q\leq\infty\text{ such that }1/p+2/q=1/2, \\
{} & \;\|\mu_b\|_{\leb^p(t_0,T)}+\sum_{j=1}^\infty\sqrt{\gamma_j}\,\|\psi_j\|_{\leb^p(t_0,T)}\|\eta_j\|_{\leb^p(\Omega)}<\infty \text{ and } \\
{} & \;\|\e^{-K_a(t,\pmb{\xi}_N)}\|_{\leb^q(\Omega)}\leq C,\text{ for all }N\geq1\text{ and }t\in [t_0,T]. 
\end{align*}
Then the sequence $\{f_1^N(x,t)\}_{N=1}^\infty$ given in (\ref{f1n}) converges in $\leb^\infty(J\times [t_0,T])$ for every bounded set $J\subseteq\mathbb{R}$, to a density $f_1(x,t)$ of the solution $x(t,\omega)$ given in (\ref{sol_ale}).
\end{theorem}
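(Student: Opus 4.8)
The plan is to exhibit the limit explicitly, prove convergence to it in $\leb^\infty(J\times[t_0,T])$ by a generalized-H\"older estimate, and then verify that the limit is genuinely a density of $x(t,\omega)$. Writing $A_N(t,\omega)=K_a(t,\pmb{\xi}_N(\omega))$, $A(t,\omega)=\int_{t_0}^t a(s,\omega)\,\dif s$, $B_N(t,\omega)=\int_{t_0}^t S_b(s,\pmb{\eta}_N(\omega))\e^{-K_a(s,\pmb{\xi}_N(\omega))}\,\dif s$ and $B(t,\omega)=\int_{t_0}^t b(s,\omega)\e^{-A(s,\omega)}\,\dif s$, one has $x_{N,N}=\e^{A_N}(x_0+B_N)$ and (\ref{sol_ale}) reads $x=\e^{A}(x_0+B)$, while (\ref{f1n}) is the expectation $f_1^N(x,t)=\mathbb{E}[f_0(x\e^{-A_N}-B_N)\e^{-A_N}]$. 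The candidate limit is
\[ f_1(x,t)=\mathbb{E}\left[f_0\left(x\,\e^{-A(t,\omega)}-B(t,\omega)\right)\e^{-A(t,\omega)}\right]; \]
conditioning on the Karhunen-Lo\`{e}ve variables $\xi_j,\eta_i$, the map $x_0\mapsto x(t)$ is affine with constant Jacobian $\e^{A}$, so Lemma \ref{lema_abscont} yields $f_1$ as the $(a,b)$-average of the conditional density, which is why it is the expected answer. I would first note that H3 makes $f_0$ bounded: an $L$-Lipschitz density with a peak of height $M$ stays above $M/2$ on an interval of length $M/L$, contributing at least $M^2/(2L)$ to its unit mass, whence $\|f_0\|_{\leb^\infty(\mathbb{R})}\leq\sqrt{2L}=:C_0$; in particular $f_1\leq C_0\,\mathbb{E}[\e^{-A}]\leq C_0C<\infty$ by H4.

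For the $\leb^\infty$-convergence, using that $f_0$ is $L$-Lipschitz and bounded by $C_0$, I would split
\[ |f_1^N-f_1|\leq L\,\mathbb{E}\left[\left(|x|\,|\e^{-A_N}-\e^{-A}|+|B_N-B|\right)\e^{-A_N}\right]+C_0\,\mathbb{E}\left[|\e^{-A_N}-\e^{-A}|\right]. \]
The engine is $a_N\to a$ and $b_N\to b$ in $\leb^2([t_0,T]\times\Omega)$ (Lemma \ref{KLlemma}, H1). Cauchy-Schwarz in $s$ gives $\|A_N(t,\cdot)-A(t,\cdot)\|_{\leb^2(\Omega)}\leq\sqrt{T-t_0}\,\|a_N-a\|_{\leb^2([t_0,T]\times\Omega)}$ \emph{uniformly} in $t$, and the elementary bound $|\e^{-A_N}-\e^{-A}|\leq|A_N-A|(\e^{-A_N}+\e^{-A})$ turns exponential differences into the small factor $|A_N-A|$. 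Since $q\geq4$ gives $2/q\leq1/2$, the two- and three-factor products (for instance $|A_N-A|\cdot\e^{-A_N}\cdot\e^{-A_N}$, with exponents $2,q,q$) are controlled via (\ref{Holder})--(\ref{coefHolder}) by $\|A_N-A\|_{\leb^2(\Omega)}$ times the uniform bound $\|\e^{-A_N}\|_{\leb^q(\Omega)}\leq C$ from H4 (and $\|\e^{-A}\|_{\leb^q(\Omega)}\leq C$ by Fatou, as $A_N\to A$ in probability). As $x$ ranges over a bounded $J$ the factor $|x|$ is uniformly bounded, so these terms tend to $0$ uniformly in $(x,t)$.

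The crux is the source term $\mathbb{E}[|B_N-B|\e^{-A_N(t)}]$. I would write $B_N-B=\int_{t_0}^t(b_N-b)\e^{-A_N}\,\dif s+\int_{t_0}^t b\,(\e^{-A_N}-\e^{-A})\,\dif s$, apply Fubini, and bound the two pieces separately. The first reduces, exactly as above, to $\int_{t_0}^T\|b_N(s)-b(s)\|_{\leb^2(\Omega)}\,\dif s\to0$. The second, after the exponential inequality, becomes a \emph{four}-factor product $|b(s)|\cdot|A_N(s)-A(s)|\cdot\e^{-A_N(s)}\cdot\e^{-A_N(t)}$, and this is precisely where the balance $1/p+2/q=1/2$ of H4 is indispensable: the exponents $p,2,q,q$ satisfy $1/p+1/2+2/q=1$, so (\ref{Holder})--(\ref{coefHolder}) closes with no slack and gives the bound $\|b(s)\|_{\leb^p(\Omega)}\,\|A_N(s)-A(s)\|_{\leb^2(\Omega)}\,C^2$. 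The triangle inequality on $b(s)=\mu_b(s)+\sum_i\sqrt{\gamma_i}\psi_i(s)\eta_i$ bounds $\|b(s)\|_{\leb^p(\Omega)}$ by $|\mu_b(s)|+\sum_i\sqrt{\gamma_i}|\psi_i(s)|\|\eta_i\|_{\leb^p(\Omega)}$, whose $\leb^p(t_0,T)$-norm is the finite quantity in H4, so $\int_{t_0}^T\|b(s)\|_{\leb^p(\Omega)}\,\dif s<\infty$; integrating the uniform factor $\|A_N(s)-A(s)\|_{\leb^2(\Omega)}\leq\sqrt{T-t_0}\,\|a_N-a\|_{\leb^2([t_0,T]\times\Omega)}$ then yields a bound that tends to $0$ uniformly in $t$. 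Collecting all terms proves $\|f_1^N-f_1\|_{\leb^\infty(J\times[t_0,T])}\to0$.

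Finally I would identify $f_1$ as the density of $x(t,\omega)$. From $A_N(t)\to A(t)$ and $B_N(t)\to B(t)$ in probability (the latter from the same estimates) and the independence of $x_0$, the continuous mapping theorem gives $x_{N,N}(t)\to x(t)$ in distribution for each $t$. For $\varphi\in C_c(\mathbb{R})$ one has $\mathbb{E}[\varphi(x_{N,N}(t))]=\int_{\mathbb{R}}\varphi(x)f_1^N(x,t)\,\dif x$; the left-hand side converges to $\mathbb{E}[\varphi(x(t))]$, and the right-hand side to $\int_{\mathbb{R}}\varphi(x)f_1(x,t)\,\dif x$ because $f_1^N\to f_1$ uniformly on the compact support of $\varphi$. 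As $C_c(\mathbb{R})$ determines measures, $f_1(\cdot,t)$ is the density of $x(t)$. The main obstacle is exactly the four-factor estimate for the source term: the whole argument hinges on the exponent identity $1/p+2/q=1/2$ making generalized H\"older close precisely, together with the uniform moment bound $\|\e^{-A_N}\|_{\leb^q(\Omega)}\leq C$ preventing the exponential weights from blowing up along the truncation.
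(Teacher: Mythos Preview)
Your proof is correct and follows essentially the same route as the paper: the same boundedness of $f_0$ from Lipschitz plus integrability, the same Mean-Value estimate $|\e^{-A_N}-\e^{-A}|\leq|A_N-A|(\e^{-A_N}+\e^{-A})$, and the same closing of the four-factor product via generalized H\"older with the exponent balance $1/p+2/q=1/2$ from H4. The differences are packaging: the paper shows $\{f_1^N\}$ is Cauchy (working only with truncations $a_N,a_M,b_N,b_M$) and identifies the limit through distribution functions, whereas you exhibit the limit $f_1(x,t)=\mathbb{E}[f_0(x\e^{-A}-B)\e^{-A}]$ explicitly and identify it via $C_c$ test functions; this is a bit cleaner but requires the extra Fatou steps you note (bounding $\|\e^{-A}\|_{\leb^q(\Omega)}$ and, implicitly, $\|b(s)\|_{\leb^p(\Omega)}$ by $\liminf$ of the truncated versions). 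Neither variant buys anything the other does not.
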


\begin{proof}
We prove that $\{f_1^N(x,t)\}_{N=1}^\infty$ is Cauchy in $\leb^\infty(J\times [t_0,T])$ for every bounded set $J\subseteq\mathbb{R}$. Fix two indexes $N>M$. 

First of all, notice that, taking the marginal distribution,
\[ f_{\pmb{\xi}_M,\pmb{\eta}_M}(\pmb{\xi}_M,\pmb{\eta}_M)=\int_{\mathbb{R}^{2(N-M)}} f_{\pmb{\xi}_N,\pmb{\eta}_N}(\pmb{\xi}_N,\pmb{\eta}_N)\,\dif \xi_{M+1}\cdots \dif\xi_N\,\dif \eta_{M+1}\cdots \dif\eta_N, \]
so, according to (\ref{f1n}) with index $M$,
\small
\begin{align}
{} & f_1^M(x,t) \nonumber \\
= {} & \int_{\mathbb{R}^{2M}} f_0\left(x\,\e^{-K_a(t,\pmb{\xi}_M)}-\int_{t_0}^t S_b(s,\pmb{\eta}_M)\e^{-K_a(s,\pmb{\xi}_M)}\,\dif s\right)f_{\pmb{\xi}_M,\pmb{\eta}_M}(\pmb{\xi}_M,\pmb{\eta}_M)\e^{-K_a(t,\pmb{\xi}_M)}\,\dif \pmb{\xi}_M\,\dif \pmb{\eta}_M \nonumber \\
= {} & \int_{\mathbb{R}^{2N}} f_0\left(x\,\e^{-K_a(t,\pmb{\xi}_M)}-\int_{t_0}^t S_b(s,\pmb{\eta}_M)\e^{-K_a(s,\pmb{\xi}_M)}\,\dif s\right)f_{\pmb{\xi}_N,\pmb{\eta}_N}(\pmb{\xi}_N,\pmb{\eta}_N)\e^{-K_a(t,\pmb{\xi}_M)}\,\dif \pmb{\xi}_N\,\dif \pmb{\eta}_N. \label{f1m_marginal}
\end{align}
\normalsize

Using (\ref{f1n}) and (\ref{f1m_marginal}), we start to estimate
\small
\begin{align*}
& |f_1^N(x,t)-f_1^M(x,t)|\leq \int_{\mathbb{R}^{2N}}\bigg\{\,\bigg|f_0\left(x\,\e^{-K_a(t,\pmb{\xi}_N)}-\int_{t_0}^t S_b(s,\pmb{\eta}_N)\e^{-K_a(s,\pmb{\xi}_N)}\,\dif s\right)\e^{-K_a(t,\pmb{\xi}_N)} \\
- {} & f_0\left(x\,\e^{-K_a(t,\pmb{\xi}_M)}-\int_{t_0}^t S_b(s,\pmb{\eta}_M)\e^{-K_a(s,\pmb{\xi}_M)}\,\dif s\right)\e^{-K_a(t,\pmb{\xi}_M)}\bigg| f_{\pmb{\xi}_N,\pmb{\eta}_N}(\pmb{\xi}_N,\pmb{\eta}_N)\bigg\}\,\dif \pmb{\xi}_N\,\dif \pmb{\eta}_N  \\
\leq {} & \int_{\mathbb{R}^{2N}} \bigg\{f_0\left(x\,\e^{-K_a(t,\pmb{\xi}_N)}-\int_{t_0}^t S_b(s,\pmb{\eta}_N)\e^{-K_a(s,\pmb{\xi}_N)}\,\dif s\right) \\
\cdot {} & |\e^{-K_a(t,\pmb{\xi}_N)}-\e^{-K_a(t,\pmb{\xi}_M)}|f_{\pmb{\xi}_N,\pmb{\eta}_N}(\pmb{\xi}_N,\pmb{\eta}_N)\bigg\}\,\dif \pmb{\xi}_N\,\dif \pmb{\eta}_N \\
+ {} & \int_{\mathbb{R}^{2N}}\bigg\{\,\bigg|f_0\left(x\,\e^{-K_a(t,\pmb{\xi}_N)}-\int_{t_0}^t S_b(s,\pmb{\eta}_N)\e^{-K_a(s,\pmb{\xi}_N)}\,\dif s\right)\\
- {} & f_0\left(x\,\e^{-K_a(t,\pmb{\xi}_M)}-\int_{t_0}^t S_b(s,\pmb{\eta}_M)\e^{-K_a(s,\pmb{\xi}_M)}\,\dif s\right)\bigg| \e^{-K_a(t,\pmb{\xi}_M)}f_{\pmb{\xi}_N,\pmb{\eta}_N}(\pmb{\xi}_N,\pmb{\eta}_N)\bigg\}\,\dif \pmb{\xi}_N\,\dif \pmb{\eta}_N \\
\stackrel{\Delta}{=} {} & \mathrm{(I1)}+\mathrm{(I2)}.
\end{align*}
\normalsize

Henceforth, concerning notation, we will denote by $C$ any constant independent of $N$, $t$ and $x$, so that the notation will not become cumbersome. Call $L$ the Lipschitz constant of $f_0$.

Now we introduce two inequalities that are direct consequence of Cauchy-Schwarz inequality and that will play a crucial role later on:
\begin{align}
\| K_a(t,\pmb{\xi}_N)-K_a(t,\pmb{\xi}_M)\|_{\leb^2(\Omega)}= {} & \mathbb{E}\left[\left(\int_{t_0}^t (a_N(s)-a_M(s))\,\dif s\right)^2\right]^{\frac12}  \nonumber \\
\leq {} & \sqrt{t-t_0}\,\mathbb{E}\left[\int_{t_0}^t (a_N(s)-a_M(s))^2\,\dif s\right]^{\frac12} \nonumber \\
\leq {} & C\,\|a_N-a_M\|_{\leb^2([t_0,T]\times\Omega)}, \label{ka}
\end{align}
\begin{equation}
 \int_{t_0}^T \mathbb{E}[ |S_b(s,\pmb{\eta}_N)-S_b(s,\pmb{\eta}_M)|^2]^{\frac12}\,\dif s\leq C\,\|b_N-b_M\|_{\leb^2([t_0,T]\times\Omega)}. 
 \label{sb}
\end{equation}

We bound $\mathrm{(I1)}$. Since $f_0$ is Lipschitz continuous (therefore uniformly continuous) and integrable on $\mathbb{R}$, it is bounded on the real line\footnote{If a function $f$ belongs to $\leb^1(\mathbb{R})$ and is uniformly continuous on $\mathbb{R}$, then $\lim_{x\rightarrow\infty} f(x)=0$, and as a consequence $f$ is bounded on $\mathbb{R}$. Indeed, suppose that $\lim_{x\rightarrow\infty} f(x)\neq0$. By definition, there is an $\epsilon_0>0$ and a sequence $\{x_n\}_{n=1}^\infty$ increasing to $\infty$ such that $|f(x_n)|>\epsilon_0$. We may assume $x_{n+1}-x_n>1$, for $n\geq1$. By uniform continuity, there exists a $\delta=\delta(\epsilon_0)>0$ such that $|f(x)-f(y)|<\epsilon_0/2$, if $|x-y|<\delta$. We may assume $0<\delta<1/2$, so that $\{(x_n-\delta,x_n+\delta)\}_{n=1}^\infty$ are pairwise disjoint intervals. We have $|f(x)|>\epsilon_0/2$ for all $x\in (x_n-\delta,x_n+\delta)$ and every $n\in\mathbb{N}$. Thereby, $\int_{\mathbb{R}}|f(x)|\,\dif x\geq \sum_{n=1}^\infty \int_{x_n-\delta}^{x_n+\delta}|f(x)|\,\dif x\geq\sum_{n=1}^\infty \delta\epsilon_0=\infty$, which is a contradiction.}, therefore 
\[ f_0\left(x\,\e^{-K_a(t,\pmb{\xi}_N)}-\int_{t_0}^t S_b(s,\pmb{\eta}_N)\e^{-K_a(s,\pmb{\xi}_N)}\,\dif s\right)\leq C. \]
To bound $|\e^{-K_a(t,\pmb{\xi}_N)}-\e^{-K_a(t,\pmb{\xi}_M)}|$, we use the Mean Value Theorem to the real function $\e^{-x}$, for each fixed $t$ and $\pmb{\xi}_N$. We have
\[ \e^{-K_a(t,\pmb{\xi}_N)}-\e^{-K_a(t,\pmb{\xi}_M)}=-\e^{-\delta_{t,\pmb{\xi}_N}}\{K_a(t,\pmb{\xi}_N)-K_a(t,\pmb{\xi}_M)\}, \]
where $K_a(t,\pmb{\xi}_N)\leq \delta_{t,\pmb{\xi}_N}\leq K_a(t,\pmb{\xi}_M)$ or $K_a(t,\pmb{\xi}_M)\leq \delta_{t,\pmb{\xi}_N}\leq K_a(t,\pmb{\xi}_N)$, which implies 
\[ \e^{-\delta_{t,\pmb{\xi}_N}}\leq \e^{-K_a(t,\pmb{\xi}_N)}+\e^{-K_a(t,\pmb{\xi}_M)}. \]
Thus,
\begin{equation}
 |\e^{-K_a(t,\pmb{\xi}_N)}-\e^{-K_a(t,\pmb{\xi}_M)}|\leq (\e^{-K_a(t,\pmb{\xi}_N)}+\e^{-K_a(t,\pmb{\xi}_M)})|K_a(t,\pmb{\xi}_N)-K_a(t,\pmb{\xi}_M)|. 
\label{tvm}
\end{equation}
We bound
\begin{align*}
{} & f_0\left(x\,\e^{-K_a(t,\pmb{\xi}_N)}-\int_{t_0}^t S_b(s,\pmb{\eta}_N)\e^{-K_a(s,\pmb{\xi}_N)}\,\dif s\right)|\e^{-K_a(t,\pmb{\xi}_N)}-\e^{-K_a(t,\pmb{\xi}_M)}| \\
\leq {} & C(\e^{-K_a(t,\pmb{\xi}_N)}+\e^{-K_a(t,\pmb{\xi}_M)})|K_a(t,\pmb{\xi}_N)-K_a(t,\pmb{\xi}_M)|. 
\end{align*}
Then $\mathrm{(I1)}$ is bounded by the expectation of this last expression:
\begin{align*}
\mathrm{(I1)}= {} & \int_{\mathbb{R}^{2N}} \bigg\{f_0\left(x\,\e^{-K_a(t,\pmb{\xi}_N)}-\int_{t_0}^t S_b(s,\pmb{\eta}_N)\e^{-K_a(s,\pmb{\xi}_N)}\,\dif s\right) \\
\cdot {} & |\e^{-K_a(t,\pmb{\xi}_N)}-\e^{-K_a(t,\pmb{\xi}_M)}|f_{\pmb{\xi}_N,\pmb{\eta}_N}(\pmb{\xi}_N,\pmb{\eta}_N)\bigg\}\,\dif \pmb{\xi}_N\,\dif \pmb{\eta}_N \\
\leq {} & C\,\mathbb{E}\left[(\e^{-K_a(t,\pmb{\xi}_N)}+\e^{-K_a(t,\pmb{\xi}_M)})|K_a(t,\pmb{\xi}_N)-K_a(t,\pmb{\xi}_M)|\right]. 
\end{align*}
By Hölder's Inequality, hypothesis H4 and bound (\ref{ka}),
\begin{align*}
 \mathrm{(I1)}\leq {} & C(\|\e^{-K_a(t,\pmb{\xi}_N)}\|_{\leb^2(\Omega)}+\|\e^{-K_a(t,\pmb{\xi}_M)}\|_{\leb^2(\Omega)})\|K_a(t,\pmb{\xi}_N)-K_a(t,\pmb{\xi}_M)\|_{\leb^2(\Omega)} \\
\leq {} & C\,\|K_a(t,\pmb{\xi}_N)-K_a(t,\pmb{\xi}_M)\|_{\leb^2(\Omega)}\leq C\,\|a_N-a_M\|_{\leb^2([t_0,T]\times \Omega)}.
\end{align*}

We bound $\mathrm{(I2)}$. Using the Lipschitz condition and the triangular inequality, 
\begin{align*}
{} & \bigg|f_0\left(x\,\e^{-K_a(t,\pmb{\xi}_N)}-\int_{t_0}^t S_b(s,\pmb{\eta}_N)\e^{-K_a(s,\pmb{\xi}_N)}\,\dif s\right)\\
- {} & f_0\left(x\,\e^{-K_a(t,\pmb{\xi}_M)}-\int_{t_0}^t S_b(s,\pmb{\eta}_M)\e^{-K_a(s,\pmb{\xi}_M)}\,\dif s\right)\bigg| \\
\leq {} & L|x|\,|\e^{-K_a(t,\pmb{\xi}_N)}-\e^{-K_a(t,\pmb{\xi}_M)}|+L\int_{t_0}^t |S_b(s,\pmb{\eta}_N)\e^{-K_a(s,\pmb{\xi}_N)}-S_b(s,\pmb{\eta}_M)\e^{-K_a(s,\pmb{\xi}_M)}|\,\dif s\\
\leq {} & L|x|\,|\e^{-K_a(t,\pmb{\xi}_N)}-\e^{-K_a(t,\pmb{\xi}_M)}|+L\int_{t_0}^t |S_b(s,\pmb{\eta}_N)|\,|\e^{-K_a(s,\pmb{\xi}_N)}-\e^{-K_a(s,\pmb{\xi}_M)}|\,\dif s \\
+ {} & L\int_{t_0}^t \e^{-K_a(s,\pmb{\xi}_M)} |S_b(s,\pmb{\eta}_N)-S_b(s,\pmb{\eta}_M)|\,\dif s\\
\stackrel{\Delta}{=} {} & \mathrm{(B1)}+\mathrm{(B2)}+\mathrm{(B3)}.
\end{align*}
Using the bound (\ref{tvm}) from the Mean Value Theorem, 
\[\mathrm{(B1)}\leq L|x|(\e^{-K_a(t,\pmb{\xi}_N)}+\e^{-K_a(t,\pmb{\xi}_M)})|K_a(t,\pmb{\xi}_N)-K_a(t,\pmb{\xi}_M)|\]
and
\[\mathrm{(B2)}\leq L\int_{t_0}^T |S_b(s,\pmb{\eta}_N)|(\e^{-K_a(s,\pmb{\xi}_N)}+\e^{-K_a(s,\pmb{\xi}_M)})|K_a(s,\pmb{\xi}_N)-K_a(s,\pmb{\xi}_M)|\,\dif s. \]
Since 
\small
\begin{align*}
\mathrm{(I2)}= {} & \int_{\mathbb{R}^{2N}}\bigg\{\,\bigg|f_0\left(x\,\e^{-K_a(t,\pmb{\xi}_N)}-\int_{t_0}^t S_b(s,\pmb{\eta}_N)\e^{-K_a(s,\pmb{\xi}_N)}\,\dif s\right)\\
- {} & f_0\left(x\,\e^{-K_a(t,\pmb{\xi}_M)}-\int_{t_0}^t S_b(s,\pmb{\eta}_M)\e^{-K_a(s,\pmb{\xi}_M)}\,\dif s\right)\bigg| \e^{-K_a(t,\pmb{\xi}_M)}f_{\pmb{\xi}_N,\pmb{\eta}_N}(\pmb{\xi}_N,\pmb{\eta}_N)\bigg\}\,\dif \pmb{\xi}_N\,\dif \pmb{\eta}_N \\
\leq {} & \mathbb{E}[\mathrm{(B1)}\cdot \e^{-K_a(t,\pmb{\xi}_M)}]+\mathbb{E}[\mathrm{(B2)}\cdot \e^{-K_a(t,\pmb{\xi}_M)}]+\mathbb{E}[\mathrm{(B3)}\cdot \e^{-K_a(t,\pmb{\xi}_M)}] \\
\stackrel{\Delta}{=} {} & \mathrm{(E1)}+\mathrm{(E2)}+\mathrm{(E3)}, 
\end{align*}
\normalsize
we need to bound these three expectations $\mathrm{(E1)}$, $\mathrm{(E2)}$ and $\mathrm{(E3)}$. First, for $\mathrm{(E1)}$, using Hölder's Inequality, hypothesis H4 and (\ref{ka}), one gets 
\small
\begin{align*}
{} & \mathrm{(E1)} \\
\leq {} & L|x|\,\| \e^{-K_a(t,\pmb{\xi}_M)}\|_{\leb^4(\Omega)}(\|\e^{-K_a(t,\pmb{\xi}_N)}\|_{\leb^4(\Omega)}+\|\e^{-K_a(t,\pmb{\xi}_M)}\|_{\leb^4(\Omega)})\|K_a(t,\pmb{\xi}_N)-K_a(t,\pmb{\xi}_M)\|_{\leb^2(\Omega)} \\
\leq {} & C|x|\,\|K_a(t,\pmb{\xi}_N)-K_a(t,\pmb{\xi}_M)\|_{\leb^2(\Omega)} \leq C|x|\,\|a_N-a_M\|_{\leb^2([t_0,T]\times \Omega)}.
\end{align*}
\normalsize
By an analogous reasoning, but also using (\ref{sb}), one deduces the following bounds:
\begin{align*}
{} & \mathrm{(E2)}  \\
\leq {} & L\int_{t_0}^T \mathbb{E}[|S_b(s,\pmb{\eta}_N)|^p]^{\frac{1}{p}}\,\mathbb{E}[\e^{-q\,K_a(t,\pmb{\xi}_M)}]^{\frac{1}{q}}(\mathbb{E}[\e^{-q\,K_a(s,\pmb{\xi}_N)}]^{\frac{1}{q}}+\mathbb{E}[\e^{-q\,K_a(s,\pmb{\xi}_M)}]^{\frac{1}{q}}) \\
\cdot {} & \mathbb{E}[|K_a(s,\pmb{\xi}_N)-K_a(s,\pmb{\xi}_M)|^2]^{\frac12}\,\dif s \\
\leq {} & C\,\int_{t_0}^T \mathbb{E}[|S_b(s,\pmb{\eta}_N)|^p]^{\frac{1}{p}}\|K_a(s,\pmb{\xi}_N)-K_a(s,\pmb{\xi}_M)\|_{\leb^2(\Omega)}\,\dif s \\
\leq {} & C\,\|a_N-a_M\|_{\leb^2([t_0,T]\times \Omega)} \int_{t_0}^T \mathbb{E}[|S_b(s,\pmb{\eta}_N)|^p]^{\frac{1}{p}}\,\dif s \\
\leq {} & C\, \|a_N-a_M\|_{\leb^2([t_0,T]\times \Omega)} \| S_b(t,\pmb{\eta}_N(\omega))\|_{\leb^p([t_0,T]\times\Omega)} \\
\leq {} & C\,\|a_N-a_M\|_{\leb^2([t_0,T]\times \Omega)}
\end{align*}
and
\begin{align*}
\mathrm{(E3)}= {} & L\,\int_{t_0}^T\mathbb{E}[\e^{-K_a(t,\pmb{\xi}_M)}\e^{-K_a(s,\pmb{\xi}_M)}|S_b(s,\pmb{\eta}_N)-S_b(s,\pmb{\eta}_M)|]\,\dif s\\
\leq {} & L\,\int_{t_0}^T \|\e^{-K_a(t,\pmb{\xi}_M)}\|_{\leb^4(\Omega)}\|\e^{-K_a(s,\pmb{\xi}_M)}\|_{\leb^4(\Omega)} \|S_b(s,\pmb{\eta}_N)-S_b(s,\pmb{\eta}_M)\|_{\leb^2(\Omega)}\,\dif s\\
\leq {} & C\,\int_{t_0}^T\|S_b(s,\pmb{\eta}_N)-S_b(s,\pmb{\eta}_M)\|_{\leb^2(\Omega)}\,\dif s\\
\leq {} & C\,\|b_N-b_M\|_{\leb^2([t_0,T]\times \Omega)}.
\end{align*}
Thus,
\begin{align*}
 \mathrm{(I2)}\leq {} & \mathrm{(E1)}+\mathrm{(E2)}+\mathrm{(E3)} \\
\leq {} & C(|x|+1)\|a_N-a_M\|_{\leb^2([t_0,T]\times \Omega)}+C\,\|b_N-b_M\|_{\leb^2([t_0,T]\times \Omega)}. 
\end{align*}

Since $\|a_N-a_M\|_{\leb^2([t_0,T]\times \Omega)}\rightarrow0$ and $\|b_N-b_M\|_{\leb^2([t_0,T]\times \Omega)}\rightarrow0$ when $N,M\rightarrow\infty$, the sequence $\{f_1^N(x,t)\}_{N=1}^\infty$ is Cauchy in $\leb^\infty(J\times [t_0,T])$ for every bounded set $J\subseteq\mathbb{R}$.

Let 
\[ g(x,t)=\lim_{N\rightarrow\infty} f_1^N(x,t), \]
$x\in\mathbb{R}$ and $t\in [t_0,T]$. Let us see that $x(t,\cdot)$ is absolutely continuous and $g(\cdot,t)$ is a density of $x(t,\cdot)$.

First, notice that $g(\cdot,t)\in \leb^1(\mathbb{R})$, since by Fatou's Lemma \cite[Lemma 1.7, p.61]{stein},
\[ \int_{\mathbb{R}} g(x,t)\,\dif x=\int_{\mathbb{R}} \lim_{N\rightarrow\infty}f_1^N(x,t)\,\dif x\leq \liminf_{N\rightarrow\infty} \underbrace{\int_{\mathbb{R}} f_1^N(x,t)\,\dif x}_{=1}=1<\infty. \]

Recall that 
\[ x_{N,N}(t,\omega)=x_0(\omega)\,\e^{\int_{t_0}^t a_N(s,\omega)\,\dif s}+\int_{t_0}^t b_N(s,\omega)\,\e^{\int_s^t a_N(r,\omega)\,\dif r}\,\dif s. \]
We check that $x_{N,N}(t,\omega)\stackrel{N\rightarrow\infty}{\longrightarrow} x(t,\omega)$ for every $t\in [t_0,T]$ and a.e. $\omega \in \Omega$. 

We know that $a_N(\cdot,\omega)\rightarrow a(\cdot,\omega)$ in $\leb^2([t_0,T])$ and $b_N(\cdot,\omega)\rightarrow b(\cdot,\omega)$ in $\leb^2([t_0,T])$ as $N\rightarrow\infty$, for a.e. $\omega \in \Omega$, because the Fourier series converges in $\leb^2$.

On the one hand, $\int_{t_0}^t a_N(s,\omega)\,\dif s \stackrel{N\rightarrow\infty}{\longrightarrow} \int_{t_0}^t a(s,\omega)\,\dif s$ for all $t\in [t_0,T]$ and for a.e. $\omega \in \Omega$, whence 
\begin{equation}
x_0(\omega)\e^{\int_{t_0}^t a_N(s,\omega)\,\dif s}\stackrel{N\rightarrow\infty}{\longrightarrow} x_0(\omega)\e^{\int_{t_0}^t a(s,\omega)\,\dif s},
 \label{lima}
\end{equation}
for all $t\in [t_0,T]$ and for a.e. $\omega \in \Omega$.

On the other hand,
\begin{align*}
 \left|b_N(s,\omega)\,\e^{\int_s^t a_N(r,\omega)\,\dif r}-b(s,\omega)\,\e^{\int_s^t a(r,\omega)\,\dif r}\right|\leq {} & |b_N(s,\omega)-b(s,\omega)|\e^{\int_s^t a_N(r,\omega)\,\dif r}\\
+ {} & |b(s,\omega)|\left|\e^{\int_s^t a_N(r,\omega)\,\dif r}-\e^{\int_s^t a(r,\omega)\,\dif r}\right|.
\end{align*}
We bound the expressions involving exponentials. First, using the deterministic Cauchy-Schwarz inequality for integrals, one gets
\[ \e^{\int_s^t a_N(r,\omega)\,\dif r}\leq \e^{\sqrt{T-t_0}\,\|a_N(\cdot,\omega)\|_{\leb^2([t_0,T])}}\leq \e^{C_\omega}=C_\omega,  \]
where $C_\omega$ represents a constant depending on $\omega$, and independent of $N$, $t$ and $x$. By the Mean Value Theorem applied to the real function $\e^x$,
\[ \e^{\int_s^t a_N(r,\omega)\,\dif r}-\e^{\int_s^t a(r,\omega)\,\dif r}=\e^{\delta_{N,s,t,\omega}}\left(\int_s^t a_N(r,\omega)\,\dif r-\int_s^t a(r,\omega)\,\dif r\right), \]
where
\begin{align*}
|\delta_{N,s,t,\omega}|\leq {} & \max\left\{\left|\int_s^t a_N(r,\omega)\,\dif r\right|,\left|\int_s^t a(r,\omega)\,\dif r\right|\right\}\\
\leq {} & \sqrt{T-t_0}\, \max\left\{ \|a(\cdot,\omega)\|_{\leb^2([t_0,T])},\|a_N(\cdot,\omega)\|_{\leb^2([t_0,T])}\right\}\leq C_\omega.
\end{align*}
Thus,
\begin{align*}
 \left|\e^{\int_s^t a_N(r,\omega)\,\dif r}-\e^{\int_s^t a(r,\omega)\,\dif r}\right| \leq {} & C_\omega \left|\int_s^t a_N(r,\omega)\,\dif r-\int_s^t a(r,\omega)\,\dif r\right|\\
\leq {} & C_\omega \|a_N(\cdot,\omega)-a(\cdot,\omega)\|_{\leb^2([t_0,T])}.
\end{align*}

Therefore,
\begin{align}
{} & \int_{t_0}^t \left|b_N(s,\omega)\,\e^{\int_s^t a_N(r,\omega)\,\dif r}-b(s,\omega)\,\e^{\int_s^t a(r,\omega)\,\dif r}\right|\,\dif s  \nonumber \\
\leq {} & C_\omega\left\{ \|b_N(\cdot,\omega)-b(\cdot,\omega)\|_{\leb^1([t_0,T])}+\|b(\cdot,\omega)\|_{\leb^1([t_0,T])}\|a_N(\cdot,\omega)-a(\cdot,\omega)\|_{\leb^2([t_0,T])}\right\} \nonumber \\
\stackrel{N\rightarrow\infty}{\longrightarrow} {} & 0. \label{limb}
\end{align}

This shows that $x_{N,N}(t,\omega)\rightarrow x(t,\omega)$ as $N\rightarrow\infty$ for every $t\in [t_0,T]$ and a.e. $\omega \in \Omega$. This says that $x_{N,N}(t,\cdot)\rightarrow x(t,\cdot)$ converges a.s. as $N\rightarrow\infty$, therefore there is convergence in law:
\[\lim_{N\rightarrow\infty} F_N(x,t)=F(x,t),\]
for every $x \in \mathbb{R}$ which is a point of continuity of $F(\cdot,t)$, where $F_N(\cdot,t)$ and $F(\cdot,t)$ are the distribution functions of $x_{N,N}(t,\cdot)$ and $x(t,\cdot)$, respectively. Since $f_1^N(x,t)$ is the density of $x_{N,N}(t,\omega)$,
\begin{equation}
 F_N(x,t)=F_N(x_0,t)+\int_{x_0}^x f_1^N(y,t)\,\dif y. 
 \label{FTCD}
\end{equation}
If $x$ and $x_0$ are points of continuity of $F(\cdot,t)$, taking limits when $N\rightarrow\infty$ we get
\begin{equation}
 F(x,t)=F(x_0,t)+\int_{x_0}^x g(y,t)\,\dif y 
 \label{FTCD2}
\end{equation}
(recall that $\{f_1^N(x,t)\}_{N=1}^\infty$ converges to $g(x,t)$ in $\leb^{\infty}(J\times \mathbb{R})$ for every bounded set $J\subseteq \mathbb{R}$, so we can interchange the limit and the integral). As the points of discontinuity of $F(\cdot,t)$ are countable and $F(\cdot,t)$ is right continuous, we obtain 
\[ F(x,t)=F(x_0,t)+\int_{x_0}^x g(y,t)\,\dif y \]
for all $x_0$ and $x$ in $\mathbb{R}$.

Thus, $g(x,t)=f_1(x,t)$ is a density for $x(t,\omega)$, as wanted.
\end{proof}

\subsection{Obtaining the density function when $b=0$ and \texorpdfstring{$f_0$}{f0} is Lipchitz on $\mathbb{R}$} \label{su2} \ \\

If $b=0$, all our previous exposition can be adapted to approximate the density function of the solution of the randomized non-autonomous homogeneous linear differential equation associated to the initial value problem (\ref{edo_det}). In this case, the solution stochastic process is 
\begin{equation}
 x(t,\omega)=x_0(\omega)\,\e^{\int_{t_0}^t a(s,\omega)\,\dif s}. 
\label{xhomo}
\end{equation}
We only need the Karhunen-Loève expansion of the stochastic process $a$,
\[ a(t,\omega)=\mu_a(t)+\sum_{j=1}^\infty \sqrt{\nu_j}\,\phi_j(t)\xi_j(\omega), \]
where $\{(\nu_j,\phi_j)\}_{j=1}^\infty$ are the corresponding pairs of eigenvalues and eigenfunctions and $\{\xi_j\}_{j=1}^\infty$ are random variables with zero expectation, unit variance and pairwise uncorrelated. Truncating $a$ via $a_N(t,\omega)=\mu_a(t)+\sum_{j=1}^N \sqrt{\nu_j}\,\phi_j(t)\xi_j(\omega)$, we obtain a truncation for the solution,
\begin{equation}
 x_{N}(t,\omega)= x_0(\omega)\,\e^{\int_{t_0}^t a_N(s,\omega)\,\dif s}. 
\label{truncamm}
\end{equation}
The density function of $x_N(t,\omega)$ is given by
\begin{equation}
 f_1^N(x,t)=\int_{\mathbb{R}^{N}} f_0\left(x\,\e^{-K_a(t,\pmb{\xi}_N)}\right)f_{\pmb{\xi}_N}(\pmb{\xi}_N)\e^{-K_a(t,\pmb{\xi}_N)}\,\dif \pmb{\xi}_N 
\label{f1Nhomo}
\end{equation}
(see (\ref{f1n}) with $b=0$). 

Notice that the evaluation inside $f_0$ in (\ref{f1Nhomo}), $x\,\e^{-K_a(t,\pmb{\xi}_N)}$, has the same sign as $x$. This is different to (\ref{f1n}). We define
\begin{equation} 
D(x_0)=\begin{cases} (0,\infty),\,& \text{if }x_0(\omega)>0 \text{ a.s.} \\ (-\infty,0),\,& \text{if }x_0(\omega)<0 \text{ a.s.} \\ \mathbb{R},\,&\text{otherwise}. \end{cases}
 \label{Dx0}
\end{equation}
Theorem \ref{teor1} becomes: 
\begin{theorem} \label{teor2}
Assume that
\begin{align*}
 \text{H1}: {} & \;a\in \leb^2([t_0,T]\times\Omega); \\
 \text{H2}: {} & \;x_0 \text{ and }(\xi_1,\ldots,\xi_N) \text{ are absolutely continuous and independent, }N\geq1; \\
 \text{H3}: {} & \;\text{the density function of }x_0\text{, }f_0 \text{, is Lipschitz on }D(x_0); \\
 \text{H4}: {} & \;\|\e^{-K_a(t,\pmb{\xi}_N)}\|_{\leb^4(\Omega)}\leq C,\text{ for all }N\geq1\text{ and }t\in [t_0,T] \\ {} & \;(p=\infty \text{ and }q=4\text{ in Theorem \ref{teor1}}).
\end{align*}
Then the sequence $\{f_1^N(x,t)\}_{N=1}^\infty$ given in (\ref{f1Nhomo}) converges in $\leb^\infty(J\times [t_0,T])$ for every bounded set $J\subseteq\mathbb{R}$, to a density $f_1(x,t)$ of the solution (\ref{xhomo}).
\end{theorem}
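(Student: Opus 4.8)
The plan is to mirror the proof of Theorem \ref{teor1}, specializing to $b=0$ and exploiting the sign remark made just before the statement. First I would show that $\{f_1^N(x,t)\}_{N=1}^\infty$ given in (\ref{f1Nhomo}) is Cauchy in $\leb^\infty(J\times[t_0,T])$ for every bounded $J\subseteq\mathbb{R}$. Fix $N>M$. Exactly as in (\ref{f1m_marginal}), I would use the marginal identity $f_{\pmb{\xi}_M}(\pmb{\xi}_M)=\int_{\mathbb{R}^{N-M}} f_{\pmb{\xi}_N}(\pmb{\xi}_N)\,\dif\xi_{M+1}\cdots\dif\xi_N$ to rewrite $f_1^M$ as an integral over $\mathbb{R}^N$ against the common density $f_{\pmb{\xi}_N}$, and then split $|f_1^N(x,t)-f_1^M(x,t)|\leq \mathrm{(I1)}+\mathrm{(I2)}$, where $\mathrm{(I1)}$ collects the term in which $f_0$ is evaluated at the same argument but the weights $\e^{-K_a(t,\pmb{\xi}_N)}$ and $\e^{-K_a(t,\pmb{\xi}_M)}$ differ, and $\mathrm{(I2)}$ collects the difference of the two $f_0$-evaluations times $\e^{-K_a(t,\pmb{\xi}_M)}$. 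Because $b=0$, the source-dependent part of H4 is vacuous and there is no $S_b$-term, so the chain of estimates is far shorter than in Theorem \ref{teor1}: only the analogues of $\mathrm{(I1)}$ and of $\mathrm{(E1)}$ (coming from $\mathrm{(B1)}$) survive, while $\mathrm{(B2)}$, $\mathrm{(B3)}$, $\mathrm{(E2)}$, $\mathrm{(E3)}$ disappear entirely.

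For $\mathrm{(I1)}$ I would first note that $f_0$ is bounded on $D(x_0)$ (it is Lipschitz on $D(x_0)$, hence uniformly continuous there, and integrable, so the footnote argument from the proof of Theorem \ref{teor1} applied on $D(x_0)$ gives boundedness); then I would use (\ref{tvm}) together with H4, which supplies the $\leb^4(\Omega)$ bound and a fortiori the $\leb^2(\Omega)$ bound, and (\ref{ka}) to obtain $\mathrm{(I1)}\leq C\,\|a_N-a_M\|_{\leb^2([t_0,T]\times\Omega)}$. For $\mathrm{(I2)}$ the crucial observation is the sign remark: since $x\,\e^{-K_a(t,\pmb{\xi}_N)}$ and $x\,\e^{-K_a(t,\pmb{\xi}_M)}$ always carry the same sign as $x$, both arguments of $f_0$ lie in $D(x_0)$ exactly when $x\in D(x_0)$, and when $x\notin D(x_0)$ both $f_1^N(x,t)$ and $f_1^M(x,t)$ vanish. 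Hence the Lipschitz bound $|f_0(x\e^{-K_a(t,\pmb{\xi}_N)})-f_0(x\e^{-K_a(t,\pmb{\xi}_M)})|\leq L|x|\,|\e^{-K_a(t,\pmb{\xi}_N)}-\e^{-K_a(t,\pmb{\xi}_M)}|$ is legitimate on the whole range where $f_1^N-f_1^M$ fails to vanish. Applying (\ref{tvm}), then H\"older's inequality with exponents $4,4,2$ (this is precisely where the $\leb^4(\Omega)$ bound, i.e.\ $q=4$, in H4 is needed), and finally (\ref{ka}), I would reach $\mathrm{(I2)}\leq C|x|\,\|a_N-a_M\|_{\leb^2([t_0,T]\times\Omega)}$. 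Combining gives $|f_1^N(x,t)-f_1^M(x,t)|\leq C(1+|x|)\,\|a_N-a_M\|_{\leb^2([t_0,T]\times\Omega)}$, which tends to $0$ uniformly on $J\times[t_0,T]$ since $\|a_N-a_M\|_{\leb^2([t_0,T]\times\Omega)}\to0$ and $|x|$ is bounded on $J$.

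Writing $g(x,t)=\lim_{N\to\infty}f_1^N(x,t)$, I would identify it as a density of (\ref{xhomo}) exactly as in Theorem \ref{teor1}: Fatou's Lemma gives $g(\cdot,t)\in\leb^1(\mathbb{R})$; the a.s.\ convergence $x_N(t,\omega)\to x(t,\omega)$ of (\ref{truncamm}) to (\ref{xhomo}) follows at once from (\ref{lima}) (with $b=0$ the source term is absent, so the estimate (\ref{limb}) is not needed), which yields convergence in law of the truncations; finally, passing to the limit in the identity (\ref{FTCD}) with the help of the $\leb^\infty$ convergence of the densities gives $F(x,t)=F(x_0,t)+\int_{x_0}^x g(y,t)\,\dif y$ for all $x_0,x\in\mathbb{R}$, whence $g(\cdot,t)=f_1(\cdot,t)$ is a density of $x(t,\omega)$. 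I expect the main obstacle to be the bookkeeping around $D(x_0)$: one must check that the Lipschitz estimate is only ever invoked at arguments lying inside $D(x_0)$, which is guaranteed precisely by the sign-preservation of $x\mapsto x\,\e^{-K_a(t,\pmb{\xi}_N)}$, while everywhere else the approximating densities vanish identically, so that no control of $f_0$ outside $D(x_0)$ is required.
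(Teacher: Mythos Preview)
Your proposal is correct and takes essentially the same approach as the paper: the paper gives no separate proof of Theorem~\ref{teor2}, presenting it as the specialization ``Theorem~\ref{teor1} becomes'' in the homogeneous case, and your sketch is precisely the detailed adaptation of the proof of Theorem~\ref{teor1} with $b=0$, correctly identifying that only $\mathrm{(I1)}$ and the $\mathrm{(E1)}$-part of $\mathrm{(I2)}$ survive and that the sign-preservation of $x\mapsto x\,\e^{-K_a(t,\pmb{\xi}_N)}$ is what allows H3 to be weakened from Lipschitz on $\mathbb{R}$ to Lipschitz on $D(x_0)$.
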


\subsection{Obtaining the density function when \texorpdfstring{$b(t,\cdot)$}{b(t,·)} is not a constant random variable and \texorpdfstring{$f_{\eta_1}$}{f_eta1} is Lipchitz on $\mathbb{R}$} \label{su3} \ \\

Take truncation (\ref{xNM}) of the solution $x(t,\omega)$ given in (\ref{sol_ale}), with $N=M$:
\[ x_{N,N}(t,\omega)=x_0(\omega)\e^{K_a(t,\pmb{\xi}_N(\omega))}+\int_{t_0}^t S_b(s,\pmb{\eta}_N(\omega))\e^{K_a(t,\pmb{\xi}_N(\omega))-K_a(s,\pmb{\xi}_N(\omega))}\,\dif s. \]
The idea is to compute (\ref{f1n}) again, but instead of isolating $x_0$ when using Lemma \ref{lema_abscont}, we will isolate $\eta_1$. This can be done whenever $b(t,\omega)\neq \mu_b(t)$ for a.e. $t\in [t_0,T]$ and a.e. $\omega\in\Omega$, since in this case the Karhunen-Loève expansion of $b$ will have more terms than the mean, in particular the first term where it appears the random variable $\eta_1$. 

To apply Lemma \ref{lema_abscont} we need to set some assumptions, as it will become clearer when writing the map of the transformation $g$ and its inverse $h$. We will assume that the random variables $\xi_1,\xi_2,\ldots$ have compact support in $[-A,A]$ ($A>0$), $\sum_{j=1}^\infty \sqrt{\nu_j}\,\left|\int_{t_0}^t\phi_j(s)\,\dif s\right|<\infty$ for all $t\in [t_0,T]$ (see Remark \ref{rmk}), and $\psi_1>0$ on $(t_0,T)$. In such a case,
\begin{align*}
\e^{-K_a(t,\pmb{\xi}_M)}= {} & \e^{-\int_{t_0}^t \mu_a(s)\,\dif s-\sum_{k=1}^M \sqrt{\nu_k}\,\left(\int_{t_0}^t \phi_k(s)\,\dif s\right)\xi_k}\\
  \geq  {} & \e^{-\int_{t_0}^t \mu_a(s)\,\dif s-\sum_{k=1}^M \sqrt{\nu_k}\,\left|\int_{t_0}^t \phi_k(s)\,\dif s\right||\xi_k|} \\
\geq {} & \e^{-\int_{t_0}^t \mu_a(s)\,\dif s-\sum_{k=1}^\infty \sqrt{\nu_k}\,\left|\int_{t_0}^t \phi_k(s)\,\dif s\right|A},
\end{align*}
whence
\begin{equation}
\int_{t_0}^t \psi_1(s)\e^{-K_a(s,\pmb{\xi}_M)}\,\dif s\geq\int_{t_0}^t \psi_1(s)\e^{-\int_{t_0}^s \mu_a(r)\,\dif r-\sum_{k=1}^\infty \sqrt{\nu_k}\,\left|\int_{t_0}^s \phi_k(r)\,\dif r\right|A}\,\dif s=:C(t)>0, 
 \label{assum}
\end{equation}
for $t\in(t_0,T]$.

Using the notation from Lemma \ref{lema_abscont}, we have
\begin{align*}
 & g(x_0,\xi_1,\ldots,\xi_N,\eta_1,\ldots,\eta_N) \\
= {} & \left(x_0,\xi_1,\ldots,\xi_N,x_0\e^{K_a(t,\pmb{\xi}_N)}+\int_{t_0}^t S_b(s,\pmb{\eta}_N)\e^{K_a(t,\pmb{\xi}_N)-K_a(s,\pmb{\xi}_N)}\,\dif s,\eta_2,\ldots,\eta_N\right), 
\end{align*}
$D=\mathbb{R}\times [-A,A]^N\times \mathbb{R}^N$ (so that $g$ is injective on $D$ by (\ref{assum})), 
\[ g(D)=\mathbb{R}\times [-A,A]^N\times \mathbb{R}^N=:\mathcal{D}_N, \]
\small
\begin{align*}
 &h(x_0,\xi_1,\ldots,\xi_N,\eta_1,\ldots,\eta_N) \\
= {} & \left(x_0,\xi_1,\ldots,\xi_N,\frac{\eta_1\e^{-K_a(t,\pmb{\xi}_N)}-x_0-\int_{t_0}^t\left(\mu_b(s)+\sum_{i=2}^N\sqrt{\gamma_i}\,\psi_i(s)\eta_i\right)\e^{-K_a(s,\pmb{\xi}_N)}\,\dif s}{\sqrt{\gamma_1}\int_{t_0}^t \psi_1(s)\e^{-K_a(s,\pmb{\xi}_N)}\,\dif s},\eta_2,\ldots,\eta_N\right) 
\end{align*}
\normalsize
and 
\[ Jh(x_0,\xi_1,\ldots,\xi_N,\eta_1,\ldots,\eta_N)=\frac{\e^{-K_a(t,\pmb{\xi}_N)}}{\sqrt{\gamma_1}\int_{t_0}^t \psi_1(s)\e^{-K_a(s,\pmb{\xi}_N)}\,\dif s}>0. \]
Note that all denominators are distinct from $0$ due to (\ref{assum}). Suppose that $x_0$, $\eta_1$ and $(\xi_1,\ldots,\xi_N,\eta_2,\ldots,\eta_N)$ are independent, for $N\geq1$. Then, taking the marginal distributions we obtain another expression for the density $f_1^N(x,t)$ of $x_N(t,\omega)$ given in (\ref{f1n}):
\small
\begin{align}
 f_1^{N}(x,t)= {} & \int_{\mathcal{D}_N} f_{\eta_1}\left(\frac{x\e^{-K_a(t,\pmb{\xi}_N)}-x_0-\int_{t_0}^t\left(\mu_b(s)+\sum_{i=2}^N\sqrt{\gamma_i}\,\psi_i(s)\eta_i\right)\e^{-K_a(s,\pmb{\xi}_N)}\,\dif s}{\sqrt{\gamma_1}\int_{t_0}^t \psi_1(s)\e^{-K_a(s,\pmb{\xi}_N)}\,\dif s}\right) \nonumber \\
\cdot {} & f_0(x_0)f_{(\xi_1,\ldots,\xi_N,\eta_2,\ldots,\eta_N)}(\xi_1,\ldots,\xi_N,\eta_2,\ldots,\eta_N) \nonumber \\
\cdot {} & \frac{\e^{-K_a(t,\pmb{\xi}_N)}}{\sqrt{\gamma_1}\int_{t_0}^t \psi_1(s)\e^{-K_a(s,\pmb{\xi}_N)}\,\dif s} \,\dif x_0\,\dif\xi_1\cdots \dif\xi_N\,\dif\eta_2\cdots \dif\eta_N. \label{f1nou}
\end{align}
\normalsize

\begin{remark} \label{rmk}
We show that the condition 
\[ \sup_{t\in[t_0,T]}\sum_{j=1}^\infty \sqrt{\nu_j}\,\left|\int_{t_0}^t\phi_j(s)\,\dif s\right|<\infty \] 
fulfills, so it is not a requirement in our development. Let $X\in \leb^2([t_0,T]\times\Omega)$ be a stochastic process. Write its Karhunen-Loève expansion as $X(t,\omega)=\mu(t)+\sum_{j=1}^\infty \sqrt{\nu_j}\,\phi_j(t)\xi_j(\omega)$.
Then 
\[ \sup_{t\in[t_0,T]}\sum_{j=1}^\infty \sqrt{\nu_j}\,\left|\int_{t_0}^t \phi_j(s)\,\dif s\right|\mathbb{E}[|\xi_j|]\leq \sup_{t\in[t_0,T]}\sum_{j=1}^\infty \sqrt{\nu_j}\,\left|\int_{t_0}^t \phi_j(s)\,\dif s\right|<\infty. \]
Indeed, as $\mathbb{E}[|\xi_j|]^2\leq \mathbb{E}[\xi_j^2]=1$, the first inequality holds. For the second inequality, first use Pythagoras's Theorem in $\leb^2([t_0,T]\times\Omega)$:
\begin{align*}
\sum_{j=M+1}^N\nu_j=\left\|\sum_{j=M+1}^N \sqrt{\nu_j}\,\phi_j\xi_j\right\|_{\leb^2([t_0,T]\times\Omega)}^2\stackrel{N,M\rightarrow\infty}{\longrightarrow}0, 
\end{align*}
therefore $\sum_{j=1}^\infty \nu_j<\infty$. By Parseval's identity for deterministic Fourier series, since $\{\phi_j\}_{j=1}^\infty$ is an orthonormal basis of $\leb^2([t_0,T])$, one gets
\[ \sum_{j=1}^\infty \left(\int_{t_0}^t \phi_j(s)\,\dif s\right)^2=\sum_{j=1}^\infty \langle \mathbbm{1}_{[t_0,t]},\phi_j \rangle_{\leb^2([t_0,T])}^2=\|\mathbbm{1}_{[t_0,t]}\|_{\leb^2([t_0,T])}^2= t-t_0\leq T-t_0<\infty. \]
By Cauchy-Schwarz inequality for series,
\begin{align*}
 \sup_{t\in[t_0,T]}\sum_{j=1}^\infty \sqrt{\nu_j}\,\left|\int_{t_0}^t \phi_j(s)\,\dif s\right|\leq {} & \sup_{t\in[t_0,T]}\left(\sum_{j=1}^\infty \nu_j\right)^{\frac12}\left(\sum_{j=1}^\infty \left(\int_{t_0}^t \phi_j(s)\,\dif s\right)^2\right)^{\frac12} \\
\leq {} & \left(\sum_{j=1}^\infty \nu_j\right)^{\frac12}\sqrt{T-t_0}<\infty. 
\end{align*}
This finishes the proof of the remark.
\end{remark}

\begin{theorem} \label{teor3}
Assume that
\begin{align*}
 \text{H1}: {} & \;a,b\in \leb^2([t_0,T]\times\Omega),\;x_0\in\leb^2(\Omega); \\
 \text{H2}: {} & \;x_0,\,\eta_1,\,(\xi_1,\ldots,\xi_N,\eta_2,\ldots,\eta_N) \text{ are absolutely continuous and independent, }N\geq1; \\
 \text{H3}: {} & \;\text{the density function of }\eta_1\text{, }f_{\eta_1} \text{, is Lipschitz on }\mathbb{R}; \\
 \text{H4}: {} & \;\xi_1,\xi_2,\ldots \text{ have compact support in }[-A,A]\, (A>0) \text{ and }\psi_1>0\text{ on }(t_0,T). 
\end{align*}
Then, for each fixed $t\in (t_0,T]$, the sequence $\{f_1^N(x,t)\}_{N=1}^\infty$ given in (\ref{f1nou}) (which is the same as (\ref{f1n})) converges in $\leb^\infty(J)$ for every bounded set $J\subseteq\mathbb{R}$, to a density $f_1(x,t)$ of the solution (\ref{sol_ale}).
\end{theorem}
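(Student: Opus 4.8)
The plan is to mirror the two-part structure of the proof of Theorem \ref{teor1}: first establish that $\{f_1^N(x,t)\}_{N=1}^\infty$ is Cauchy in $\leb^\infty(J)$ for each fixed $t\in(t_0,T]$ and every bounded $J\subseteq\mathbb{R}$, and then identify the limit $g(x,t)=\lim_{N\to\infty} f_1^N(x,t)$ as a density of $x(t,\omega)$. The decisive simplification relative to Theorem \ref{teor1} is that, under H4, the variables $\xi_k$ are supported in $[-A,A]$, so that $K_a(t,\pmb{\xi}_N)$ is bounded, $|K_a(t,\pmb{\xi}_N)|\leq K^*$, uniformly in $N$, $t$ and a.e.\ $\omega$ (this is exactly the estimate carried out just before (\ref{assum}), combined with Remark \ref{rmk}). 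Hence $\e^{-K^*}\leq \e^{-K_a(t,\pmb{\xi}_N)}\leq \e^{K^*}$, and by (\ref{assum}) the denominator $Q_N(t):=\int_{t_0}^t\psi_1(s)\e^{-K_a(s,\pmb{\xi}_N)}\,\dif s$ satisfies $Q_N(t)\geq C(t)>0$ for $t\in(t_0,T]$. These two-sided pointwise bounds replace the $\leb^q(\Omega)$ estimates of Theorem \ref{teor1}; note also that $f_{\eta_1}$, being Lipschitz and integrable, is bounded on $\mathbb{R}$ by the same argument as in the footnote of Theorem \ref{teor1}.

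For the Cauchy estimate I would fix $t$ and $N>M$, rewrite $f_1^M$ by marginal integration so that both integrals run over $\mathcal{D}_N$ against the common density $f_0(x_0)\,f_{(\xi_1,\ldots,\xi_N,\eta_2,\ldots,\eta_N)}$, and split the integrand difference into a term in which only the argument of $f_{\eta_1}$ changes (controlled by the Lipschitz constant $L_{\eta_1}$) and a term in which only the Jacobian factor $\e^{-K_a(t,\pmb{\xi}_N)}/(\sqrt{\gamma_1}Q_N(t))$ changes (controlled by $\|f_{\eta_1}\|_{\leb^\infty}$). Writing the argument of $f_{\eta_1}$ as $U_N/(\sqrt{\gamma_1}Q_N(t))$ with $U_N=x\,\e^{-K_a(t,\pmb{\xi}_N)}-x_0-\int_{t_0}^t(\mu_b(s)+\sum_{i=2}^N\sqrt{\gamma_i}\,\psi_i(s)\eta_i)\e^{-K_a(s,\pmb{\xi}_N)}\,\dif s$, I would use the identity
\[ \frac{U_N}{Q_N(t)}-\frac{U_M}{Q_M(t)}=\frac{U_N\,(Q_M(t)-Q_N(t))+Q_N(t)\,(U_N-U_M)}{Q_N(t)\,Q_M(t)}, \]
so that the denominators are absorbed using the lower bound $C(t)$. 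The differences $|\e^{-K_a(t,\pmb{\xi}_N)}-\e^{-K_a(t,\pmb{\xi}_M)}|$, $|Q_N(t)-Q_M(t)|$ and the $b$-part of $U_N-U_M$ are then handled exactly as in Theorem \ref{teor1}, by the Mean Value Theorem (bound (\ref{tvm})) together with (\ref{ka}) and the analogue of (\ref{sb}) for the tail $\sum_{i=M+1}^N\sqrt{\gamma_i}\,\psi_i\eta_i$, producing factors $\|a_N-a_M\|_{\leb^2([t_0,T]\times\Omega)}$ and $\|b_N-b_M\|_{\leb^2([t_0,T]\times\Omega)}$, both tending to $0$. The remaining factors $|U_N|$, $\mathbb{E}[|x_0|]$ and $\int_{t_0}^T\mathbb{E}[|S_b(s,\pmb{\eta}_N)|]\,\dif s$ are finite thanks to H1 ($x_0\in\leb^2(\Omega)$, $b\in\leb^2([t_0,T]\times\Omega)$), and taking the supremum over $x\in J$ keeps the $|x|$-terms bounded because $J$ is bounded. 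The one essential qualitative difference is that every constant here carries a factor $1/C(t)$ coming from (\ref{assum}); since $C(t)\to0$ as $t\to t_0^+$, the estimate is not uniform in $t$, which is exactly why convergence is asserted in $\leb^\infty(J)$ for each fixed $t$ rather than in $\leb^\infty(J\times[t_0,T])$ as in Theorem \ref{teor1}.

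Once the sequence is shown to be Cauchy, the identification of the limit is verbatim the second half of the proof of Theorem \ref{teor1} and requires no new ideas: the limit $g(\cdot,t)$ lies in $\leb^1(\mathbb{R})$ by Fatou's Lemma; the a.s.\ convergence $x_{N,N}(t,\omega)\to x(t,\omega)$ for each $t$, established there using only H1 via (\ref{lima}) and (\ref{limb}), yields convergence in law of $x_{N,N}(t,\cdot)$; and passing to the limit in the distributional Fundamental Theorem of Calculus (\ref{FTCD}), using the uniform convergence of $f_1^N$ on bounded $x$-sets, gives (\ref{FTCD2}) and hence that $g(\cdot,t)=f_1(\cdot,t)$ is a density of $x(t,\omega)$.

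I expect the main obstacle to be the bookkeeping in the Cauchy estimate: the argument of $f_{\eta_1}$ is now a quotient, so controlling $U_N/Q_N(t)-U_M/Q_M(t)$ hinges on the lower bound (\ref{assum}) for $Q_N(t)$ and on the uniform two-sided control of the exponentials furnished by H4, and one must track the $t$-dependence of all constants through $C(t)$ in order to state convergence correctly only for fixed $t$.
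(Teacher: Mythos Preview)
Your proposal is correct and follows essentially the same approach as the paper: the Cauchy estimate via the same split (Lipschitz part on the argument of $f_{\eta_1}$, boundedness part on the Jacobian factor), exploiting the two-sided exponential bounds and the lower bound (\ref{assum}) furnished by H4, then Cauchy--Schwarz plus (\ref{ka}) and the $b$-tail estimate to produce the $\|a_N-a_M\|_{\leb^2}$ and $\|b_N-b_M\|_{\leb^2}$ factors; the identification of the limit is, as you say, verbatim the end of Theorem~\ref{teor1}. Your observation that the $1/C(t)$ factor is what prevents uniformity in $t$ is exactly the point.
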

\begin{proof}
The idea is to prove that, for each fixed $t\in (t_0,T]$, the sequence $\{f_1^N(x,t)\}_{N=1}^\infty$ given in (\ref{f1nou}) is Cauchy in $\leb^\infty(J)$, for every bounded set $J\subseteq\mathbb{R}$.

First, we deal with some inequalities that will facilitate things later on. Fix two indexes $N>M$ and $t\in (t_0,T]$. Fix real numbers $\xi_1,\ldots,\xi_N$ that belong to $[-A,A]$ and real numbers $\eta_1,\ldots,\eta_N$. To make the notation easier, hereinafter we will denote by $C$ any constant independent of $N$ and $x$.

We have
\begin{align*}
& \left|\frac{\e^{-K_a(t,\pmb{\xi}_N)}}{\sqrt{\gamma_1}\int_{t_0}^t \psi_1(s)\e^{-K_a(s,\pmb{\xi}_N)}\,\dif s}-\frac{\e^{-K_a(t,\pmb{\xi}_M)}}{\sqrt{\gamma_1}\int_{t_0}^t \psi_1(s)\e^{-K_a(s,\pmb{\xi}_M)}\,\dif s}\right| \\
= {} & \left|\frac{\e^{-K_a(t,\pmb{\xi}_N)}\int_{t_0}^t \psi_1(s)\e^{-K_a(s,\pmb{\xi}_M)}\,\dif s-\e^{-K_a(t,\pmb{\xi}_M)}\int_{t_0}^t \psi_1(s)\e^{-K_a(s,\pmb{\xi}_N)}\,\dif s}{\sqrt{\gamma_1} \left(\int_{t_0}^t \psi_1(s)\e^{-K_a(s,\pmb{\xi}_N)}\,\dif s\right)\left(\int_{t_0}^t \psi_1(s)\e^{-K_a(s,\pmb{\xi}_M)}\,\dif s\right)}\right|.
\end{align*}
Recall that Remark \ref{rmk} and the boundedness of $\xi_1,\ldots,\xi_N$ imply (\ref{assum}), even more:
\begin{align}
 \int_{t_0}^t \psi_1(s)\e^{-K_a(s,\pmb{\xi}_M)}\,\dif s\geq {} & \int_{t_0}^t \psi_1(s)\e^{-\int_{t_0}^s \mu_a(r)\,\dif r-\sum_{k=1}^\infty \sqrt{\nu_k}\,\left|\int_{t_0}^t \phi_k(s)\,\dif s\right|A}\,\dif s \nonumber \\
\geq {} & \int_{t_0}^t \psi_1(s)\e^{-\int_{t_0}^s |\mu_a(r)|\,\dif r-C\,A}\,\dif s \\
\geq {} & \int_{t_0}^t \psi_1(s)\,\dif s\,\e^{-\int_{t_0}^T |\mu_a(r)|\,\dif r-C\,A} \\
\geq {} & \e^{-\|\mu_a\|_{\leb^2([t_0,T])}-C\,A}\|\psi_1\|_{\leb^1([t_0,t])}>0. \label{infe}
\end{align}
Then
\begin{align*}
& \left|\frac{\e^{-K_a(t,\pmb{\xi}_N)}}{\sqrt{\gamma_1}\int_{t_0}^t \psi_1(s)\e^{-K_a(s,\pmb{\xi}_N)}\,\dif s}-\frac{\e^{-K_a(t,\pmb{\xi}_M)}}{\sqrt{\gamma_1}\int_{t_0}^t \psi_1(s)\e^{-K_a(s,\pmb{\xi}_M)}\,\dif s}\right| \\
\leq {} & C\,\left|\e^{-K_a(t,\pmb{\xi}_N)}\int_{t_0}^t \psi_1(s)\e^{-K_a(s,\pmb{\xi}_M)}\,\dif s-\e^{-K_a(t,\pmb{\xi}_M)}\int_{t_0}^t \psi_1(s)\e^{-K_a(s,\pmb{\xi}_N)}\,\dif s\right| \\
\leq {} & C\bigg\{ \left(\int_{t_0}^t \psi_1(s)\e^{-K_a(s,\pmb{\xi}_M)}\,\dif s\right)\left|\e^{-K_a(t,\pmb{\xi}_N)}-\e^{-K_a(t,\pmb{\xi}_M)}\right| \\
+ {} & \e^{-K_a(t,\pmb{\xi}_M)}\left| \int_{t_0}^t \psi_1(s)\e^{-K_a(s,\pmb{\xi}_N)}\,\dif s-\int_{t_0}^t \psi_1(s)\e^{-K_a(s,\pmb{\xi}_M)}\,\dif s\right|\bigg\}.
\end{align*}
We have 
\begin{equation}
 e^{\pm K_a(t,\pmb{\xi}_N)}\leq \e^{\int_{t_0}^t|\mu_a(s)|\,\dif s+A\sum_{j=1}^\infty \sqrt{\nu_j}\,\left|\int_{t_0}^t \phi_j(s)\,\dif s\right|}\leq \e^{\|\mu_a\|_{\leb^2([t_0,T])}+C\,A},
\label{eka}
\end{equation}
whence
\begin{equation}
 \int_{t_0}^t \psi_1(s)\e^{-K_a(s,\pmb{\xi}_M)}\,\dif s\leq \e^{\|\mu_a\|_{\leb^2([t_0,T])}+C\,A}\|\psi_1\|_{\leb^1([t_0,T])}<\infty. 
 \label{ipsi}
\end{equation}
By (\ref{tvm}) (recall it was a consequence of the Mean Value Theorem) and (\ref{eka}),
\begin{align}
 \left|\e^{-K_a(t,\pmb{\xi}_N)}-\e^{-K_a(t,\pmb{\xi}_M)}\right|\leq {} & (\e^{-K_a(t,\pmb{\xi}_N)}+\e^{K_a(t,\pmb{\xi}_M)})|K_a(t,\pmb{\xi}_N)-K_a(t,\pmb{\xi}_M)| \nonumber \\
\leq {} & C|K_a(t,\pmb{\xi}_N)-K_a(t,\pmb{\xi}_M)|. \label{anm}
\end{align}
Finally, by (\ref{anm}) and Cauchy-Schwarz inequality for integrals,
\begin{align}
& \left| \int_{t_0}^t \psi_1(s) \e^{-K_a(s,\pmb{\xi}_N)}\,\dif s-\int_{t_0}^t \psi_1(s) \e^{-K_a(s,\pmb{\xi}_M)}\,\dif s\right| \nonumber \\
= {} & \left| \int_{t_0}^t \psi_1(s)\left( \e^{-K_a(s,\pmb{\xi}_N)}- \e^{-K_a(s,\pmb{\xi}_M)}\right)\,\dif s\right| \nonumber \\
\leq {} & \int_{t_0}^t \psi_1(s)\left|\e^{-K_a(s,\pmb{\xi}_N)}-\e^{-K_a(s,\pmb{\xi}_M)}\right|\,\dif s \nonumber \\
\leq {} & \int_{t_0}^T \psi_1(s)\left|\e^{-K_a(s,\pmb{\xi}_N)}-\e^{-K_a(s,\pmb{\xi}_M)}\right|\,\dif s \nonumber \\
\leq {} & \|\psi_1\|_{\leb^2([t_0,T])}\|\e^{-K_a(\cdot,\pmb{\xi}_N)}-\e^{-K_a(\cdot,\pmb{\xi}_M)}\|_{\leb^2([t_0,T])} \\
\leq {} & C \|K_a(\cdot,\pmb{\xi}_N)-K_a(\cdot,\pmb{\xi}_M)\|_{\leb^2([t_0,T])}. \label{vaig}
\end{align}
Inequalities (\ref{ipsi}), (\ref{anm}), (\ref{eka}) and (\ref{vaig}) yield
\begin{align}
 &\left|\frac{\e^{-K_a(t,\pmb{\xi}_N)}}{\sqrt{\gamma_1}\int_{t_0}^t \psi_1(s)\e^{-K_a(s,\pmb{\xi}_N)}\,\dif s}-\frac{\e^{-K_a(t,\pmb{\xi}_M)}}{\sqrt{\gamma_1}\int_{t_0}^t \psi_1(s)\e^{-K_a(s,\pmb{\xi}_M)}\,\dif s}\right| \nonumber \\
\leq {} & C\left\{ |K_a(t,\pmb{\xi}_N)-K_a(t,\pmb{\xi}_M)|+\| K_a(\cdot,\pmb{\xi}_N)-K_a(\cdot,\pmb{\xi}_M)\|_{\leb^2([t_0,T])}\right\} . \label{terme1}
\end{align}

Another bound that will be used later on, and which is a consequence of (\ref{infe}) and (\ref{eka}), is
\begin{equation}
 \frac{\e^{-K_a(t,\pmb{\xi}_M)}}{\sqrt{\gamma_1}\int_{t_0}^t \psi_1(s)\e^{-K_a(s,\pmb{\xi}_M)}\,\dif s}\leq C. 
\label{terme2}
\end{equation}

Let $L$ be the Lipschitz constant of $f_{\eta_1}$ on $\mathbb{R}$. Then
\footnotesize
\begin{align}
& \bigg|f_{\eta_1}\left(\frac{x\e^{-K_a(t,\pmb{\xi}_N)}-x_0-\int_{t_0}^t\left(\mu_b(s)+\sum_{i=2}^N\sqrt{\gamma_i}\,\psi_i(s)\eta_i\right)\e^{-K_a(s,\pmb{\xi}_N)}\,\dif s}{\sqrt{\gamma_1}\int_{t_0}^t \psi_1(s)\e^{-K_a(s,\pmb{\xi}_N)}\,\dif s}\right) \nonumber \\
- {} & f_{\eta_1}\left(\frac{x\e^{-K_a(t,\pmb{\xi}_M)}-x_0-\int_{t_0}^t\left(\mu_b(s)+\sum_{i=2}^M\sqrt{\gamma_i}\,\psi_i(s)\eta_i\right)\e^{-K_a(s,\pmb{\xi}_M)}\,\dif s}{\sqrt{\gamma_1}\int_{t_0}^t \psi_1(s)\e^{-K_a(s,\pmb{\xi}_M)}\,\dif s}\right)\bigg| \nonumber \\
\leq {} & L\,\bigg| \frac{x\e^{-K_a(t,\pmb{\xi}_N)}-x_0-\int_{t_0}^t\left(\mu_b(s)+\sum_{i=2}^N\sqrt{\gamma_i}\,\psi_i(s)\eta_i\right)\e^{-K_a(s,\pmb{\xi}_N)}\,\dif s}{\sqrt{\gamma_1}\int_{t_0}^t \psi_1(s)\e^{-K_a(s,\pmb{\xi}_N)}\,\dif s} \nonumber \\
- {} & \frac{x\e^{-K_a(t,\pmb{\xi}_M)}-x_0-\int_{t_0}^t\left(\mu_b(s)+\sum_{i=2}^M\sqrt{\gamma_i}\,\psi_i(s)\eta_i\right)\e^{-K_a(s,\pmb{\xi}_M)}\,\dif s}{\sqrt{\gamma_1}\int_{t_0}^t \psi_1(s)\e^{-K_a(s,\pmb{\xi}_M)}\,\dif s}\bigg| \nonumber \\
{} & \text{(common denominator to subtract both fractions and inequality (\ref{infe}))} \nonumber \\
\leq {} & C\,\bigg|\left(\int_{t_0}^t \psi_1(s)\e^{-K_a(s,\pmb{\xi}_M)}\,\dif s\right)\left(x\e^{-K_a(t,\pmb{\xi}_N)}-x_0-\int_{t_0}^t\left(\mu_b(s)+\sum_{i=2}^N\sqrt{\gamma_i}\,\psi_i(s)\eta_i\right)\e^{-K_a(s,\pmb{\xi}_N)}\,\dif s\right) \nonumber \\
- {} & \left(\int_{t_0}^t \psi_1(s)\e^{-K_a(s,\pmb{\xi}_N)}\,\dif s\right)\left(x\e^{-K_a(t,\pmb{\xi}_M)}-x_0-\int_{t_0}^t\left(\mu_b(s)+\sum_{i=2}^M\sqrt{\gamma_i}\,\psi_i(s)\eta_i\right)\e^{-K_a(s,\pmb{\xi}_M)}\,\dif s\right)\bigg| \nonumber \\
{} & \text{(add and subtract, use bounds (\ref{ipsi}) and (\ref{vaig}) and triangular inequality)} \nonumber \\
\leq {} & C\bigg\{ \| K_a(\cdot,\pmb{\xi}_N)-K_a(\cdot,\pmb{\xi}_M)\|_{\leb^2([t_0,T])}\left(|x|\e^{-K_a(t,\pmb{\xi}_N)}+|x_0|+\int_{t_0}^t\left|\mu_b(s)+\sum_{i=2}^M\sqrt{\gamma_i}\,\psi_i(s)\eta_i\right|\e^{-K_a(s,\pmb{\xi}_M)}\,\dif s\right) \nonumber \\
+ {} & |x|\left|\e^{-K_a(t,\pmb{\xi}_N)}-e^{-K_a(t,\pmb{\xi}_M)}\right| \nonumber \\
+ {} & \left| \int_{t_0}^t \left(\mu_b(s)+\sum_{i=2}^M\sqrt{\gamma_i}\,\psi_i(s)\eta_i\right)\e^{-K_a(s,\pmb{\xi}_M)}\,\dif s - \int_{t_0}^t \left(\mu_b(s)+\sum_{i=2}^N\sqrt{\gamma_i}\,\psi_i(s)\eta_i\right)\e^{-K_a(s,\pmb{\xi}_N)}\,\dif s\right|\bigg\} \nonumber \\
{} & \text{(bounds (\ref{eka}) and (\ref{anm}))} \nonumber \\
\leq {} & C\left\{ |x|+|x_0|+\int_{t_0}^t\left|\mu_b(s)+\sum_{i=2}^M\sqrt{\gamma_i}\,\psi_i(s)\eta_i\right|\,\dif s\right\}\| K_a(\cdot,\pmb{\xi}_N)-K_a(\cdot,\pmb{\xi}_M)\|_{\leb^2([t_0,T])} \nonumber \\
+ {} & C\,|x||K_a(t,\pmb{\xi}_N)-K_a(t,\pmb{\xi}_M)| \nonumber \\
+ {} & C\int_{t_0}^t \left|\e^{-K_a(s,\pmb{\xi}_N)}-\e^{-K_a(s,\pmb{\xi}_M)}\right|\left|\mu_b(s)+\sum_{i=2}^N\sqrt{\gamma_i}\,\psi_i(s)\eta_i\right|\,\dif s+ \int_{t_0}^t \e^{-K_a(s,\pmb{\xi}_N)}\left|\sum_{i=M+1}^N\sqrt{\gamma_i}\,\psi_i(s)\eta_i\right|\,\dif s \nonumber \\
{} & \text{(Cauchy-Schwarz, (\ref{eka}) and (\ref{anm}))} \nonumber \\
\leq {} & C\left\{ |x|+|x_0|+\left\|\mu_b(\cdot)+\sum_{i=2}^M\sqrt{\gamma_i}\,\psi_i(\cdot)\eta_i\right\|_{\leb^2([t_0,T])}\right\}\| K_a(\cdot,\pmb{\xi}_N)-K_a(\cdot,\pmb{\xi}_M)\|_{\leb^2([t_0,T])} \nonumber \\
+ {} & C\,|x||K_a(t,\pmb{\xi}_N)-K_a(t,\pmb{\xi}_M)| \nonumber \\
+ {} & C\|K_a(\cdot,\pmb{\xi}_N)-K_a(\cdot,\pmb{\xi}_M)\|_{\leb^2([t_0,T])}\left\|\mu_b(\cdot)+\sum_{i=2}^N\sqrt{\gamma_i}\,\psi_i(\cdot)\eta_i\right\|_{\leb^2([t_0,T])}+ C\left\|\sum_{i=M+1}^N\sqrt{\gamma_i}\,\psi_i(\cdot)\eta_i\right\|_{\leb^2([t_0,T])}. \label{terme3} 
\end{align}
\normalsize

Since $f_{\eta_1}$ is Lipschitz and integrable on $\mathbb{R}$, it is bounded, therefore
\footnotesize
\begin{equation}
 f_{\eta_1}\left(\frac{x\e^{-K_a(t,\pmb{\xi}_N)}-x_0-\int_{t_0}^t\left(\mu_b(s)+\sum_{i=2}^N\sqrt{\gamma_i}\,\psi_i(s)\eta_i\right)\e^{-K_a(s,\pmb{\xi}_N)}\,\dif s}{\sqrt{\gamma_1}\int_{t_0}^t \psi_1(s)\e^{-K_a(s,\pmb{\xi}_N)}\,\dif s}\right) \leq C. 
\label{terme4}
\end{equation}
\normalsize

We estimate $|f_1^N(x,t)-f_1^M(x,t)|$. Using expression (\ref{f1nou}) and taking marginal distributions with respect to $\xi_{M+1},\ldots,\xi_N,\eta_{M+1},\ldots,\eta_N$, we obtain the following expression for $f_1^M(x,t)$:
\footnotesize
\begin{align*}
 f_1^{M}(x,t)= {} & \int_{\mathcal{D}_M} f_{\eta_1}\left(\frac{x\e^{-K_a(t,\pmb{\xi}_M)}-x_0-\int_{t_0}^t\left(\mu_b(s)+\sum_{i=2}^M\sqrt{\gamma_i}\,\psi_i(s)\eta_i\right)\e^{-K_a(s,\pmb{\xi}_M)}\,\dif s}{\sqrt{\gamma_1}\int_{t_0}^t \psi_1(s)\e^{-K_a(s,\pmb{\xi}_M)}\,\dif s}\right) \nonumber \\
\cdot {} & f_0(x_0)f_{(\xi_1,\ldots,\xi_M,\eta_2,\ldots,\eta_M)}(\xi_1,\ldots,\xi_M,\eta_2,\ldots,\eta_M) \nonumber \\
\cdot {} & \frac{\e^{-K_a(t,\pmb{\xi}_M)}}{\sqrt{\gamma_1}\int_{t_0}^t \psi_1(s)\e^{-K_a(s,\pmb{\xi}_M)}\,\dif s} \,\dif x_0\,\dif\xi_1\cdots \dif\xi_M\,\dif\eta_2\cdots \dif\eta_M \\
= {} & \int_{\mathcal{D}_N} f_{\eta_1}\left(\frac{x\e^{-K_a(t,\pmb{\xi}_M)}-x_0-\int_{t_0}^t\left(\mu_b(s)+\sum_{i=2}^M\sqrt{\gamma_i}\,\psi_i(s)\eta_i\right)\e^{-K_a(s,\pmb{\xi}_M)}\,\dif s}{\sqrt{\gamma_1}\int_{t_0}^t \psi_1(s)\e^{-K_a(s,\pmb{\xi}_M)}\,\dif s}\right) \nonumber \\
\cdot {} & f_0(x_0)f_{(\xi_1,\ldots,\xi_N,\eta_2,\ldots,\eta_N)}(\xi_1,\ldots,\xi_N,\eta_2,\ldots,\eta_N) \nonumber \\
\cdot {} & \frac{\e^{-K_a(t,\pmb{\xi}_M)}}{\sqrt{\gamma_1}\int_{t_0}^t \psi_1(s)\e^{-K_a(s,\pmb{\xi}_M)}\,\dif s} \,\dif x_0\,\dif\xi_1\cdots \dif\xi_N\,\dif\eta_2\cdots \dif\eta_N.
\end{align*}
\normalsize
Then, using the triangular inequality,
\footnotesize
\begin{align*}
{} & |f_1^N(x,t)-f_1^M(x,t)| \\
\leq {} & \int_{\mathcal{D}_N} \bigg|f_{\eta_1}\left(\frac{x\e^{-K_a(t,\pmb{\xi}_N)}-x_0-\int_{t_0}^t\left(\mu_b(s)+\sum_{i=2}^N\sqrt{\gamma_i}\,\psi_i(s)\eta_i\right)\e^{-K_a(s,\pmb{\xi}_N)}\,\dif s}{\sqrt{\gamma_1}\int_{t_0}^t \psi_1(s)\e^{-K_a(s,\pmb{\xi}_N)}\,\dif s}\right)\frac{\e^{-K_a(t,\pmb{\xi}_N)}}{\sqrt{\gamma_1}\int_{t_0}^t \psi_1(s)\e^{-K_a(s,\pmb{\xi}_N)}\,\dif s}  \\
- {} & f_{\eta_1}\left(\frac{x\e^{-K_a(t,\pmb{\xi}_M)}-x_0-\int_{t_0}^t\left(\mu_b(s)+\sum_{i=2}^M\sqrt{\gamma_i}\,\psi_i(s)\eta_i\right)\e^{-K_a(s,\pmb{\xi}_M)}\,\dif s}{\sqrt{\gamma_1}\int_{t_0}^t \psi_1(s)\e^{-K_a(s,\pmb{\xi}_M)}\,\dif s}\right)\frac{\e^{-K_a(t,\pmb{\xi}_M)}}{\sqrt{\gamma_1}\int_{t_0}^t \psi_1(s)\e^{-K_a(s,\pmb{\xi}_M)}\,\dif s}\bigg| \\
\cdot {} & f_0(x_0)f_{(\xi_1,\ldots,\xi_N,\eta_2,\ldots,\eta_N)}(\xi_1,\ldots,\xi_N,\eta_2,\ldots,\eta_N)\,\dif x_0\,\dif\xi_1\cdots \dif\xi_N\,\dif\eta_2\cdots \dif\eta_N \\
\leq {} & \int_{\mathcal{D}_N} \bigg|f_{\eta_1}\left(\frac{x\e^{-K_a(t,\pmb{\xi}_N)}-x_0-\int_{t_0}^t\left(\mu_b(s)+\sum_{i=2}^N\sqrt{\gamma_i}\,\psi_i(s)\eta_i\right)\e^{-K_a(s,\pmb{\xi}_N)}\,\dif s}{\sqrt{\gamma_1}\int_{t_0}^t \psi_1(s)\e^{-K_a(s,\pmb{\xi}_N)}\,\dif s}\right) \\
- {} & f_{\eta_1}\left(\frac{x\e^{-K_a(t,\pmb{\xi}_M)}-x_0-\int_{t_0}^t\left(\mu_b(s)+\sum_{i=2}^M\sqrt{\gamma_i}\,\psi_i(s)\eta_i\right)\e^{-K_a(s,\pmb{\xi}_M)}\,\dif s}{\sqrt{\gamma_1}\int_{t_0}^t \psi_1(s)\e^{-K_a(s,\pmb{\xi}_M)}\,\dif s}\right)\bigg| \\
\cdot {} & \frac{\e^{-K_a(t,\pmb{\xi}_M)}}{\sqrt{\gamma_1}\int_{t_0}^t \psi_1(s)\e^{-K_a(s,\pmb{\xi}_M)}\,\dif s} f_0(x_0)f_{(\xi_1,\ldots,\xi_N,\eta_2,\ldots,\eta_N)}(\xi_1,\ldots,\xi_N,\eta_2,\ldots,\eta_N)\,\dif x_0\,\dif\xi_1\cdots \dif\xi_N\,\dif\eta_2\cdots \dif\eta_N \\
+ {} & \int_{\mathcal{D}_N} f_{\eta_1}\left(\frac{x\e^{-K_a(t,\pmb{\xi}_N)}-x_0-\int_{t_0}^t\left(\mu_b(s)+\sum_{i=2}^N\sqrt{\gamma_i}\,\psi_i(s)\eta_i\right)\e^{-K_a(s,\pmb{\xi}_N)}\,\dif s}{\sqrt{\gamma_1}\int_{t_0}^t \psi_1(s)\e^{-K_a(s,\pmb{\xi}_N)}\,\dif s}\right) \\
\cdot {} & \left| \frac{\e^{-K_a(t,\pmb{\xi}_N)}}{\sqrt{\gamma_1}\int_{t_0}^t \psi_1(s)\e^{-K_a(s,\pmb{\xi}_N)}\,\dif s} - \frac{\e^{-K_a(t,\pmb{\xi}_M)}}{\sqrt{\gamma_1}\int_{t_0}^t \psi_1(s)\e^{-K_a(s,\pmb{\xi}_M)}\,\dif s} \right| \\
\cdot {} & f_0(x_0)f_{(\xi_1,\ldots,\xi_N,\eta_2,\ldots,\eta_N)}(\xi_1,\ldots,\xi_N,\eta_2,\ldots,\eta_N)\,\dif x_0\,\dif\xi_1\cdots \dif\xi_N\,\dif\eta_2\cdots \dif\eta_N.
\end{align*}
\normalsize
Using the definition of expectation as an integral with respect to the corresponding density function, bounds (\ref{terme1}), (\ref{terme2}), (\ref{terme3}) and (\ref{terme4}), Cauchy-Schwarz inequality and (\ref{ka}), we have:
\footnotesize
\begin{align*}
& |f_1^N(x,t)-f_1^M(x,t)| \\
\leq {} & C\bigg\{ \mathbb{E}\bigg[|K_a(t,\pmb{\xi}_N)-K_a(t,\pmb{\xi}_M)|+\|K_a(\cdot,\pmb{\xi}_N)-K_a(\cdot,\pmb{\xi}_M)\|_{\leb^2([t_0,T])}\bigg] \\
+ {} & \mathbb{E}\left[\left(|x|+|x_0|+\left\|\mu_b(\cdot)+\sum_{i=2}^M\sqrt{\gamma_i}\,\psi_i(\cdot)\eta_i\right\|_{\leb^2([t_0,T])}\right)\|K_a(\cdot,\pmb{\xi}_N)-K_a(\cdot,\pmb{\xi}_M)\|_{\leb^2([t_0,T])}\right] \\
+ {} & |x|\,\mathbb{E}\left[|K_a(t,\pmb{\xi}_N)-K_a(t,\pmb{\xi}_M)|\right] \\
+ {} & \mathbb{E}\left[\|K_a(\cdot,\pmb{\xi}_N)-K_a(\cdot,\pmb{\xi}_M)\|_{\leb^2([t_0,T])}\left\|\mu_b(\cdot)+\sum_{i=2}^N\sqrt{\gamma_i}\,\psi_i(\cdot)\eta_i\right\|_{\leb^2([t_0,T])} \right] \\
+ {} & \mathbb{E}\left[\left\|\sum_{i=M+1}^N\sqrt{\gamma_i}\,\psi_i(\cdot)\eta_i\right\|_{\leb^2([t_0,T])}\right] \bigg\} \\
\leq {} & C\bigg\{(|x|+1)\mathbb{E}[|K_a(t,\pmb{\xi}_N)-K_a(t,\pmb{\xi}_M)|]+(|x|+\|x_0\|_{\leb^2(\Omega)}+1)\mathbb{E}\left[\|K_a(\cdot,\pmb{\xi}_N)-K_a(\cdot,\pmb{\xi}_M)\|_{\leb^2([t_0,T])}^2\right]^\frac12 \\
+ {} & \mathbb{E}\left[\|K_a(\cdot,\pmb{\xi}_N)-K_a(\cdot,\pmb{\xi}_M)\|_{\leb^2([t_0,T])}\left\|\mu_b(\cdot)+\sum_{i=2}^M\sqrt{\gamma_i}\,\psi_i(\cdot)\eta_i\right\|_{\leb^2([t_0,T])}\right] \\
+ {} & \mathbb{E}\left[\|K_a(\cdot,\pmb{\xi}_N)-K_a(\cdot,\pmb{\xi}_M)\|_{\leb^2([t_0,T])}\left\|\mu_b(\cdot)+\sum_{i=2}^N\sqrt{\gamma_i}\,\psi_i(\cdot)\eta_i\right\|_{\leb^2([t_0,T])}\right] \\
+ {} & \mathbb{E}\left[\left\|\sum_{i=M+1}^N\sqrt{\gamma_i}\,\psi_i(\cdot)\eta_i\right\|_{\leb^2([t_0,T])}\right]\bigg\} \\
\leq {} & C\bigg\{(|x|+1)\|K_a(t,\pmb{\xi}_N)-K_a(t,\pmb{\xi}_M)\|_{\leb^2(\Omega)}+(|x|+\|x_0\|_{\leb^2(\Omega)}+1)\mathbb{E}\left[\|K_a(\cdot,\pmb{\xi}_N)-K_a(\cdot,\pmb{\xi}_M)\|_{\leb^2([t_0,T])}^2\right]^{\frac12} \\
+ {} & \mathbb{E}\left[\|K_a(\cdot,\pmb{\xi}_N)-K_a(\cdot,\pmb{\xi}_M)\|_{\leb^2([t_0,T])}^2\right]^{\frac12} \mathbb{E}\left[\left\|\mu_b(\cdot)+\sum_{i=2}^M\sqrt{\gamma_i}\,\psi_i(\cdot)\eta_i\right\|_{\leb^2([t_0,T])}^2\right]^{\frac12} \\
+ {} & \mathbb{E}\left[\|K_a(\cdot,\pmb{\xi}_N)-K_a(\cdot,\pmb{\xi}_M)\|_{\leb^2([t_0,T])}^2\right]^{\frac12} \mathbb{E}\left[\left\|\mu_b(\cdot)+\sum_{i=2}^N\sqrt{\gamma_i}\,\psi_i(\cdot)\eta_i\right\|_{\leb^2([t_0,T])}^2\right]^{\frac12} \\
+ {} & \mathbb{E}\left[\left\|\sum_{i=M+1}^N\sqrt{\gamma_i}\,\psi_i(\cdot)\eta_i\right\|_{\leb^2([t_0,T])}^2\right]^{\frac12} \bigg\} \\
\leq {} & C\bigg\{\left(|x|+\|x_0\|_{\leb^2(\Omega)}+1+\left\|\mu_b(\cdot)+\sum_{i=2}^M\sqrt{\gamma_i}\,\psi_i(\cdot)\eta_i\right\|_{\leb^2([t_0,T]\times\Omega)}+\left\|\mu_b(\cdot)+\sum_{i=2}^N\sqrt{\gamma_i}\,\psi_i(\cdot)\eta_i\right\|_{\leb^2([t_0,T]\times\Omega)}\right) \\
\cdot  {} & \|a_N-a_M\|_{\leb^2([t_0,T]\times\Omega)}+\left\|\sum_{i=M+1}^N\sqrt{\gamma_i}\,\psi_i(\cdot)\eta_i\right\|_{\leb^2([t_0,T]\times\Omega)} \bigg\}.
\end{align*}
\normalsize
Since $a_N\rightarrow a$ and $b_N\rightarrow b$ as $N\rightarrow\infty$ in $\leb^2([t_0,T]\times\Omega)$, we conclude that the sequence $\{f_1^N(x,t)\}_{N=1}^\infty$ given in (\ref{f1nou}) is Cauchy in $\leb^\infty(J)$, for every bounded set $J\subseteq\mathbb{R}$. 

As we saw in the proof of Theorem \ref{teor1}, $x_{N,N}(t,\omega)\rightarrow x(t,\omega)$ as $N\rightarrow\infty$ for all $t\in [t_0,T]$ and a.e. $\omega\in\Omega$. As we showed there, this fact is enough to ensure that the limit of the sequence $\{f_1^N(x,t)\}_{N=1}^\infty$ is a density of the process $x(t,\omega)$ given in (\ref{sol_ale}).

\end{proof}

\subsection{Obtaining the density function when $b=0$ and \texorpdfstring{$f_{\xi_1}$}{fxi1} is Lipchitz on $\mathbb{R}$} \label{su4} \ \\

If $b=0$, then the truncation (\ref{xNM}) becomes (\ref{truncamm}),
\[ x_N(t,\omega)=x_0(\omega)\e^{K_a(t,\pmb{\xi}_N(\omega))}. \]
We use Lemma \ref{lema_abscont} to compute (\ref{f1Nhomo}) in a different way. The idea is that, instead of isolating $x_0$, we isolate $\xi_1$. Indeed, in the notation of Lemma \ref{lema_abscont},
\[ g(\xi_1,\ldots,\xi_N,x_0)=\left(x_0\e^{K_a(t,\pmb{\xi}_N)},\xi_2,\ldots,\xi_N,x_0\right), \]
\[ D=\mathbb{R}^N\times\{x_0\in\mathbb{R}:\,x_0\neq0\}, \] 
\begin{align*}
 g(D)= {} & \{(\xi_1,\ldots,\xi_N,x_0)\in\mathbb{R}^{N+1}:\,\xi_1/x_0>0,\,x_0\neq0\} \\
= {} & ((0,\infty)\times\mathbb{R}^{N-2}\times (0,\infty))\cup ((-\infty,0)\times \mathbb{R}^{N-2}\times (-\infty,0)), 
\end{align*}
\footnotesize
\begin{align*}
{} & h(\xi_1,\xi_2,\ldots,\xi_N,x_0) \\
= {} & \left(\frac{1}{\sqrt{\nu_1}\,\int_{t_0}^t\phi_1(s)\,\dif s} \left\{\log\left(\frac{\xi_1}{x_0}\right)-\int_{t_0}^t\mu_a(s)\,\dif s-\sum_{j=2}^N\sqrt{\nu_j}\,\left(\int_{t_0}^t\phi_j(s)\,\dif s\right)\xi_j\right\},\xi_2,\ldots,\xi_N,x_0\right) 
\end{align*}
\normalsize
and, assuming that $\int_{t_0}^t\phi_1(s)\,\dif s\neq0$ for all $t\in (t_0,T]$, 
\[ |Jh(\xi_1,\ldots,\xi_N,x_0)|=\frac{1}{|\xi_1|\sqrt{\nu_1}\,\left|\int_{t_0}^t \phi_1(s)\,\dif s\right|}\neq0. \]
Assume independence of $x_0$, $\xi_1$ and $(\xi_2,\ldots,\xi_N)$, $N\geq2$. Then, taking the marginal distributions we arrive at the following form of (\ref{f1Nhomo}),
\footnotesize
\begin{align}
 {} & f_1^N(x,t) \nonumber \\
= {} & \int_{\mathbb{R}^{N-1}\times I_{\mathrm{sign}(x)}} f_{\xi_1}\left(\frac{1}{\sqrt{\nu_1}\,\int_{t_0}^t\phi_1(s)\,\dif s} \left\{\log\left(\frac{x}{x_0}\right)-\int_{t_0}^t\mu_a(s)\,\dif s-\sum_{j=2}^N\sqrt{\nu_j}\,\left(\int_{t_0}^t\phi_j(s)\,\dif s\right)\xi_j\right\}\right) \nonumber \\
\cdot {} & f_{(\xi_2,\ldots,\xi_N)}(\xi_2,\ldots,\xi_N)f_0(x_0)\frac{1}{|x|\sqrt{\nu_1}\,\left|\int_{t_0}^t \phi_1(s)\,\dif s\right|}\,\dif \xi_2\cdots\dif \xi_N\,\dif x_0, \label{f1ncall}
\end{align}
\normalsize
where $\mathrm{sign}(x)=+$ if $x>0$ and $\mathrm{sign}(x)=-$ if $x<0$, and $I_+=(0,\infty)$ and $I_-=(-\infty,0)$. This density function (\ref{f1ncall}) is not defined at $x=0$, but it does not matter since density functions may only be defined almost everywhere on $\mathbb{R}$.

\begin{theorem} \label{teorb0L}
Assume that:
\begin{align*}
 \text{H1}: {} & \;a\in \leb^2([t_0,T]\times\Omega); \\
 \text{H2}: {} & \;x_0,\;\xi_1\text{ and }(\xi_2,\ldots,\xi_N) \text{ are absolutely continuous and independent, }N\geq2; \\
 \text{H3}: {} & \;\text{the density function of }\xi_1\text{, }f_{\xi_1} \text{, is Lipschitz on }\mathbb{R}; \\
 \text{H4}: {} & \; \int_{t_0}^t\phi_1(s)\,\dif s\neq0\text{ for all }t\in (t_0,T].
\end{align*}
Then, for each fixed $t\in (t_0,T]$, the sequence $\{f_1^N(x,t)\}_{N=1}^\infty$ given in (\ref{f1ncall}) (which is the same as (\ref{f1Nhomo})) converges in $\leb^\infty(J)$ for every bounded set $J\subseteq\mathbb{R}\backslash [-\delta,\delta]$, for every $\delta>0$, to a density $f_1(x,t)$ of the solution stochastic process (\ref{xhomo}).
\end{theorem}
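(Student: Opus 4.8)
The plan is to mirror the proof of Theorem~\ref{teor1}: first show that, for fixed $t\in(t_0,T]$ and $\delta>0$, the sequence $\{f_1^N(x,t)\}_{N=1}^\infty$ is Cauchy in $\leb^\infty(J)$ for every bounded $J\subseteq\mathbb{R}\backslash[-\delta,\delta]$, and then identify its limit as a density of the solution~(\ref{xhomo}). Fix $N>M\geq2$. As before, I would first rewrite $f_1^M(x,t)$ over the same domain $\mathbb{R}^{N-1}\times I_{\mathrm{sign}(x)}$ used for $f_1^N(x,t)$ by marginalizing the joint density $f_{(\xi_2,\ldots,\xi_N)}$ with respect to $\xi_{M+1},\ldots,\xi_N$. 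The decisive structural feature of this sign-splitting approach is that, since $\xi_1$ has been isolated, both the Jacobian factor $1/(|x|\sqrt{\nu_1}|\int_{t_0}^t\phi_1(s)\,\dif s|)$ and the $N$--independent part $\log(x/x_0)-\int_{t_0}^t\mu_a(s)\,\dif s$ of the argument of $f_{\xi_1}$ coincide for the indices $N$ and $M$; consequently the whole $x$-- and $x_0$--dependence drops out of the $N$--difference, leaving only a tail sum in the eigenfunctions of $a$.

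Writing $\Theta_N$ for the argument of $f_{\xi_1}$ appearing in~(\ref{f1ncall}), this observation gives
\[ \Theta_N-\Theta_M=\frac{-1}{\sqrt{\nu_1}\int_{t_0}^t\phi_1(s)\,\dif s}\sum_{j=M+1}^N\sqrt{\nu_j}\left(\int_{t_0}^t\phi_j(s)\,\dif s\right)\xi_j, \]
which is free of $x$ and $x_0$. Using that $f_{\xi_1}$ is Lipschitz with constant $L$ (hypothesis H3), that the Jacobian factor is bounded by $1/(\delta\sqrt{\nu_1}|\int_{t_0}^t\phi_1(s)\,\dif s|)$ on $\{|x|\geq\delta\}$ (here hypothesis H4 guarantees the denominator is nonzero), and integrating out $f_0(x_0)$ (which contributes a factor at most $1$) and $f_{(\xi_2,\ldots,\xi_N)}$, I would obtain
\[ |f_1^N(x,t)-f_1^M(x,t)|\leq \frac{L}{\delta\,\nu_1\left(\int_{t_0}^t\phi_1(s)\,\dif s\right)^2}\,\mathbb{E}\left[\left|\sum_{j=M+1}^N\sqrt{\nu_j}\left(\int_{t_0}^t\phi_j(s)\,\dif s\right)\xi_j\right|\right]. \]
The inner sum equals $K_a(t,\pmb{\xi}_N)-K_a(t,\pmb{\xi}_M)$, so using $\mathbb{E}[|\cdot|]\leq\|\cdot\|_{\leb^2(\Omega)}$ and then~(\ref{ka}) bounds the expectation by $C\,\|a_N-a_M\|_{\leb^2([t_0,T]\times\Omega)}$, which tends to $0$. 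Since this bound is uniform for $|x|\geq\delta$, the Cauchy property in $\leb^\infty(J)$ follows.

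To identify the limit $g(x,t)=\lim_{N\to\infty}f_1^N(x,t)$ as a density, I would reuse the argument given in the proof of Theorem~\ref{teor1}. Since $a_N\to a$ in $\leb^2([t_0,T])$ for a.e.\ $\omega$, the exponents $\int_{t_0}^t a_N(s,\omega)\,\dif s$ converge and hence $x_N(t,\omega)\to x(t,\omega)$ a.s., whence $F_N(\cdot,t)\to F(\cdot,t)$ in law. Writing $F_N(x,t)=F_N(x_1,t)+\int_{x_1}^x f_1^N(y,t)\,\dif y$ and letting $N\to\infty$ along continuity points $x_1,x$ lying in the \emph{same} half-line---so that $[x_1,x]$ stays away from the origin and the uniform convergence just established permits interchanging limit and integral---yields $F(x,t)=F(x_1,t)+\int_{x_1}^x g(y,t)\,\dif y$ separately on $(0,\infty)$ and on $(-\infty,0)$.

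The main obstacle---and the reason for excluding a neighbourhood $[-\delta,\delta]$ of the origin---is the factor $1/|x|$ in the Jacobian, which blows up as $x\to0$ and prevents any uniform control there. To glue the two half-line identities into a genuine density on all of $\mathbb{R}$, I would use that $x_0$ is absolutely continuous, so that $\mathbb{P}[x(t)=0]=\mathbb{P}[x_0=0]=0$ and $F(\cdot,t)$ is continuous at $0$; letting $x_1\to0^{\pm}$ in the two relations then gives $\int_{\mathbb{R}}g(y,t)\,\dif y=1$ and $F(x,t)=\int_{-\infty}^x g(y,t)\,\dif y$ for every $x$, so that $g(\cdot,t)=f_1(\cdot,t)$ is indeed a density of the solution~(\ref{xhomo}).
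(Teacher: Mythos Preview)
Your proof is correct and follows essentially the same strategy as the paper's: establish the Cauchy property via the Lipschitz bound on $f_{\xi_1}$ applied to the tail $\sum_{j=M+1}^N \sqrt{\nu_j}\bigl(\int_{t_0}^t\phi_j\bigr)\xi_j$, and then identify the limit as a density through the almost-sure convergence $x_N(t,\omega)\to x(t,\omega)$. The only minor differences are that you control the tail via~(\ref{ka}) and $\|a_N-a_M\|_{\leb^2([t_0,T]\times\Omega)}$ where the paper instead invokes Remark~\ref{rmk}, and that you spell out the gluing of the two half-line identities across $x=0$ more carefully than the paper, which simply refers back to the end of the proof of Theorem~\ref{teor1}.
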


\begin{proof}
Let us see that, for each fixed $t$, $\{f_1^N(x,t)\}_{N=1}^\infty$ is Cauchy in $\leb^\infty(J)$, for every bounded set $J\subseteq\mathbb{R}\backslash [-\delta,\delta]$, $\delta>0$. Fix two indexes $N>M\geq2$. If we denote by $L$ the Lipschitz constant of $f_{\xi_1}$, we have:
\begin{align*}
{} & \bigg|f_{\xi_1}\left(\frac{1}{\sqrt{\nu_1}\,\int_{t_0}^t\phi_1(s)\,\dif s} \left\{\log\left(\frac{x}{x_0}\right)-\int_{t_0}^t\mu_a(s)\,\dif s-\sum_{j=2}^N\sqrt{\nu_j}\,\left(\int_{t_0}^t\phi_j(s)\,\dif s\right)\xi_j\right\}\right) \\
- {} & f_{\xi_1}\left(\frac{1}{\sqrt{\nu_1}\,\int_{t_0}^t\phi_1(s)\,\dif s} \left\{\log\left(\frac{x}{x_0}\right)-\int_{t_0}^t\mu_a(s)\,\dif s-\sum_{j=2}^M\sqrt{\nu_j}\,\left(\int_{t_0}^t\phi_j(s)\,\dif s\right)\xi_j\right\}\right)\bigg| \\
\leq {} & L\,\frac{1}{\sqrt{\nu_1}\,\left|\int_{t_0}^t\phi_1(s)\,\dif s\right|}\sum_{j=M+1}^N \sqrt{\nu_j}\,\left|\int_{t_0}^t\phi_j(s)\,\dif s\right||\xi_j|.
\end{align*}
Taking marginal distributions in expression (\ref{f1ncall}), one gets
\footnotesize
\begin{align*}
 {} & f_1^M(x,t) \nonumber \\
= {} & \int_{\mathbb{R}^{M-1}\times I_{\mathrm{sign}(x)}} f_{\xi_1}\left(\frac{1}{\sqrt{\nu_1}\,\int_{t_0}^t\phi_1(s)\,\dif s} \left\{\log\left(\frac{x}{x_0}\right)-\int_{t_0}^t\mu_a(s)\,\dif s-\sum_{j=2}^M\sqrt{\nu_j}\,\left(\int_{t_0}^t\phi_j(s)\,\dif s\right)\xi_j\right\}\right) \nonumber \\
\cdot {} & f_{(\xi_2,\ldots,\xi_M)}(\xi_2,\ldots,\xi_M)f_0(x_0)\frac{1}{|x|\sqrt{\nu_1}\,\left|\int_{t_0}^t \phi_1(s)\,\dif s\right|}\,\dif \xi_2\cdots\dif \xi_M\,\dif x_0 \\
= {} & \int_{\mathbb{R}^{N-1}\times I_{\mathrm{sign}(x)}} f_{\xi_1}\left(\frac{1}{\sqrt{\nu_1}\,\int_{t_0}^t\phi_1(s)\,\dif s} \left\{\log\left(\frac{x}{x_0}\right)-\int_{t_0}^t\mu_a(s)\,\dif s-\sum_{j=2}^M\sqrt{\nu_j}\,\left(\int_{t_0}^t\phi_j(s)\,\dif s\right)\xi_j\right\}\right) \nonumber \\
\cdot {} & f_{(\xi_2,\ldots,\xi_N)}(\xi_2,\ldots,\xi_N)f_0(x_0)\frac{1}{|x|\sqrt{\nu_1}\,\left|\int_{t_0}^t \phi_1(s)\,\dif s\right|}\,\dif \xi_2\cdots\dif \xi_N\,\dif x_0.
\end{align*}
\normalsize
We can estimate
\small
\begin{align*}
{} & |f_1^N(x,t)-f_1^M(x,t)| \\
\leq {} & L\int_{\mathbb{R}^{N-1}\times I_{\mathrm{sign}(x)}} \frac{1}{\sqrt{\nu_1}\,\left|\int_{t_0}^t\phi_1(s)\,\dif s\right|}\sum_{j=M+1}^N \left\{\sqrt{\nu_j}\,\left|\int_{t_0}^t\phi_j(s)\,\dif s\right||\xi_j|\right\} \\
\cdot {} & f_{(\xi_2,\ldots,\xi_N)}(\xi_2,\ldots,\xi_N)f_0(x_0)\frac{1}{|x|\sqrt{\nu_1}\,\left|\int_{t_0}^t \phi_1(s)\,\dif s\right|}\,\dif \xi_2\cdots\dif \xi_N\,\dif x_0 \\
= {} & L\int_{\mathbb{R}^{N-1}\times I_{\mathrm{sign}(x)}} \frac{1}{|x|\nu_1\,\left(\int_{t_0}^t\phi_1(s)\,\dif s\right)^2}\sum_{j=M+1}^N \left\{\sqrt{\nu_j}\,\left|\int_{t_0}^t\phi_j(s)\,\dif s\right||\xi_j|\right\} \\
\cdot {} & f_{(\xi_2,\ldots,\xi_N)}(\xi_2,\ldots,\xi_N)f_0(x_0)\,\dif \xi_2\cdots\dif \xi_N\,\dif x_0 \\
\leq {} & L\,\frac{1}{|x|\nu_1\,\left(\int_{t_0}^t\phi_1(s)\,\dif s\right)^2}\sum_{j=M+1}^N \sqrt{\nu_j}\,\left|\int_{t_0}^t\phi_j(s)\,\dif s\right|\mathbb{E}[|\xi_j|].
\end{align*}
\normalsize
Since $\sum_{j=1}^\infty \sqrt{\nu_j}\,\left|\int_{t_0}^t\phi_j(s)\,\dif s\right|\,\mathbb{E}[|\xi_j|]<\infty$ by Remark \ref{rmk}, we obtain that $\{f_1^N(x,t)\}_{N=1}^\infty$ is Cauchy in $\leb^\infty(J)$, for every bounded set $J\subseteq\mathbb{R}\backslash [-\delta,\delta]$, for every $\delta>0$.

As we saw in the end of Theorem \ref{teor1}, the truncation $x_N(t,\omega)$ given in (\ref{truncamm}) converges to the process $x(t,\omega)$ given in (\ref{xhomo}) for all $t$ and a.e. $\omega$ as $N\rightarrow\infty$. This allows us to conclude that the limit of $\{f_1^N(x,t)\}_{N=1}^\infty$ is the density function of the solution $x(t,\omega)$ given in (\ref{xhomo}).

\end{proof}

\subsection{Obtaining the density function under the weaker assumption of continuity} \label{su5} \ \\

We present some results that substitute the Lipschitz hypothesis by a continuity assumption. Notice that we ``only'' prove a pointwise convergence to the density of $x(t,\omega)$, not a uniform convergence on compact sets, as we did in the previous theorems.

\begin{theorem} \label{teornou1}
Assume the following four hypotheses:
\begin{align*}
 \text{H1}: {} & \;a,b\in \leb^2([t_0,T]\times\Omega); \\
 \text{H2}: {} & \;x_0 \text{ and }(\xi_1,\ldots,\xi_N,\eta_1,\ldots,\eta_N) \text{ are absolutely continuous and independent, }N\geq1; \\
 \text{H3}: {} & \;\text{the density function of }x_0\text{, }f_0 \text{, is continuous and bounded on }\mathbb{R}; \\ 
\text{H4}: {} & \;\|\e^{-K_a(t,\pmb{\xi}_N)}\|_{\leb^2(\Omega)}\leq C,\text{ for all }N\geq1\text{ and }t\in [t_0,T].
\end{align*}
Then, for all $x\in\mathbb{R}$ and $t\in [t_0,T]$, the sequence $\{f_1^N(x,t)\}_{N=1}^\infty$ given in (\ref{f1n}) converges to a density $f_1(x,t)$ of the solution $x(t,\omega)$ given in (\ref{sol_ale}).
\end{theorem}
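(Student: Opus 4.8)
\emph{The plan} is to read $f_1^N(x,t)$ as an expectation and to pass to the limit inside it, using only the continuity and boundedness of $f_0$ (H3) together with the uniform $\leb^2(\Omega)$-bound of H4. Writing (\ref{f1n}) as an integral against the joint density of $(\pmb{\xi}_N,\pmb{\eta}_N)$, I would record that for each fixed $x\in\mathbb{R}$ and $t\in[t_0,T]$,
\[ f_1^N(x,t)=\mathbb{E}\left[f_0\!\left(x\,\e^{-K_a(t,\pmb{\xi}_N)}-\int_{t_0}^t S_b(s,\pmb{\eta}_N)\e^{-K_a(s,\pmb{\xi}_N)}\,\dif s\right)\e^{-K_a(t,\pmb{\xi}_N)}\right]=:\mathbb{E}[g_N]. \]
First I would invoke the sample-path analysis already carried out in the proof of Theorem \ref{teor1} (the computations yielding (\ref{lima}) and (\ref{limb})): for a.e.\ $\omega\in\Omega$ and every $t$ one has $K_a(t,\pmb{\xi}_N(\omega))\to\int_{t_0}^t a(s,\omega)\,\dif s$ and $\int_{t_0}^t S_b(s,\pmb{\eta}_N(\omega))\e^{-K_a(s,\pmb{\xi}_N(\omega))}\,\dif s\to\int_{t_0}^t b(s,\omega)\e^{-\int_{t_0}^s a(r,\omega)\,\dif r}\,\dif s$. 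Hence the argument of $f_0$ converges a.e., and since $f_0$ is continuous $g_N\to g$ a.e., with
\[ g=f_0\!\left(x\,\e^{-\int_{t_0}^t a(s,\omega)\,\dif s}-\int_{t_0}^t b(s,\omega)\e^{-\int_{t_0}^s a(r,\omega)\,\dif r}\,\dif s\right)\e^{-\int_{t_0}^t a(s,\omega)\,\dif s}. \]

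The hard part will be to justify $\mathbb{E}[g_N]\to\mathbb{E}[g]$: because $f_0$ is only continuous, no Lipschitz modulus is available and the Cauchy estimates of Theorems \ref{teor1}--\ref{teorb0L} are out of reach, so convergence must be extracted directly from the a.e.\ convergence of the integrand. This is exactly where H4 enters. Since $f_0$ is bounded, $|g_N|\leq\|f_0\|_{\leb^\infty(\mathbb{R})}\,\e^{-K_a(t,\pmb{\xi}_N)}$, and H4 gives $\sup_{N}\|\e^{-K_a(t,\pmb{\xi}_N)}\|_{\leb^2(\Omega)}\leq C$; thus $\{g_N\}_N$ is bounded in $\leb^2(\Omega)$ and hence uniformly integrable. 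Combining uniform integrability with the a.e.\ convergence $g_N\to g$, Vitali's convergence theorem yields $g_N\to g$ in $\leb^1(\Omega)$, whence $f_1^N(x,t)=\mathbb{E}[g_N]\to\mathbb{E}[g]=:f_1(x,t)$ for every $x$ and $t$. Note that this argument produces only pointwise convergence, precisely as the statement asserts, because without a modulus of continuity there is nothing available to make the convergence uniform in $x$.

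It remains to check that $f_1(\cdot,t)$ is a density of $x(t,\cdot)$. By Fatou's Lemma, $\int_{\mathbb{R}}f_1(x,t)\,\dif x\leq\liminf_N\int_{\mathbb{R}}f_1^N(x,t)\,\dif x=1$, so $f_1(\cdot,t)\in\leb^1(\mathbb{R})$. As established in Theorem \ref{teor1}, $x_{N,N}(t,\cdot)\to x(t,\cdot)$ a.s.\ and therefore in law. I would then test against an arbitrary continuous $\varphi$ with compact support: on the one hand $\varphi\,f_1^N\to\varphi\,f_1$ pointwise and is dominated by $\|\varphi\|_{\leb^\infty}\|f_0\|_{\leb^\infty(\mathbb{R})}\,C\,\mathbbm{1}_{\mathrm{supp}\,\varphi}\in\leb^1(\mathbb{R})$, so dominated convergence gives $\int_{\mathbb{R}}\varphi\,f_1^N\to\int_{\mathbb{R}}\varphi\,f_1$; on the other hand $\int_{\mathbb{R}}\varphi(x)f_1^N(x,t)\,\dif x=\mathbb{E}[\varphi(x_{N,N}(t))]\to\mathbb{E}[\varphi(x(t))]$ by convergence in law. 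Equating the two limits gives $\int_{\mathbb{R}}\varphi(x)f_1(x,t)\,\dif x=\mathbb{E}[\varphi(x(t))]$ for every such $\varphi$, which forces the measure $f_1(x,t)\,\dif x$ to coincide with the law of $x(t,\cdot)$; in particular $f_1(\cdot,t)$ is a genuine probability density of $x(t,\cdot)$, completing the proof.
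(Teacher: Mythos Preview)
Your argument is correct and follows the same global strategy as the paper: rewrite $f_1^N(x,t)$ as an expectation, use the almost-sure convergence established in (\ref{lima})--(\ref{limb}) to get a.e.\ convergence of the integrand, pass to the limit inside the expectation, and then identify the limit as the density of $x(t,\cdot)$. The differences are in the packaging of the last two steps. For the passage to the limit, the paper splits $\mathbb{E}[f_0(xY_N-Z_N)Y_N]-\mathbb{E}[f_0(xY-Z)Y]$ into two pieces, applies Cauchy--Schwarz and dominated convergence to the first, and proves $\mathbb{E}[|Y_N-Y|]\to0$ separately (via the Mean Value Theorem and H4) for the second; you instead observe that the sequence is bounded in $\leb^2(\Omega)$ and invoke Vitali's theorem in one stroke, which is slicker and avoids the extra computation. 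For the identification of $f_1(\cdot,t)$ as the density of $x(t,\cdot)$, the paper checks $\int_{\mathbb{R}}g(x,t)\,\dif x=1$ by Fubini and a change of variables and then appeals to Scheff\'e's Lemma, whereas you test against $C_c$ functions and conclude by uniqueness of Radon measures; both routes are standard and equally valid.
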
 
\begin{proof}
Fix $x\in\mathbb{R}$ and $t\in [t_0,T]$. If we define the random variables
\[ Y_N(\omega):=\e^{-K_a(t,\pmb{\xi}_N(\omega))},\quad Z_N(\omega):=\int_{t_0}^t S_b(s,\pmb{\eta}_N(\omega))\e^{-K_a(s,\pmb{\xi}_N(\omega))}\,\dif s, \]
then notice that the density function given by (\ref{f1n}) becomes
\begin{equation}
 f_1^N(x,t)=\mathbb{E}\left[f_0(xY_N-Z_N)Y_N\right]. 
 \label{f1nexp}
\end{equation}
By (\ref{lima}) and (\ref{limb}), we have
\[ \lim_{N\rightarrow\infty} \e^{-K_a(t,\pmb{\xi}_N(\omega))}=\e^{-\int_{t_0}^t a(s,\omega)\,\dif s}=:Y(\omega) \]
and
\[ \lim_{N\rightarrow\infty} \int_{t_0}^t S_b(s,\pmb{\eta}_N(\omega))\e^{-K_a(s,\pmb{\xi}_N(\omega))}\,\dif s=\int_{t_0}^t b(s,\omega)\e^{-\int_{t_0}^s a(r,\omega)\,\dif r}\,\dif s=:Z(\omega), \]
for a.e. $\omega\in\Omega$. Then $(Y_N(\omega),Z_N(\omega))\rightarrow (Y(\omega),Z(\omega))$ as $N\rightarrow\infty$, for a.e. $\omega\in\Omega$. 

Let $g(x,t)=\mathbb{E}[f_0(xY-Z)Y]$. By the triangular and Cauchy-Schwarz inequalities, we can estimate the difference between (\ref{f1nexp}) and $g(x,t)$:
\small
\begin{align*}
 |f_1^N(x,t)- {} & g(x,t)|\leq \mathbb{E}[|f_0(xY_N-Z_N)-f_0(xY-Z)||Y_N|]+\mathbb{E}[f_0(xY-Z)|Y_N-Y|] \\
\leq {} & \mathbb{E}[|f_0(xY_N- Z_N)-f_0(xY-Z)|^2]^{\frac12}\mathbb{E}[Y_N^2]^{\frac12}+\mathbb{E}[f_0(xY-Z)|Y_N-Y|].
\end{align*}
\normalsize
By hypotheses H3 and H4, 
\[ |f_1^N(x,t)-g(x,t)|\leq C\,\mathbb{E}[|f_0(xY_N- Z_N)-f_0(xY-Z)|^2]^{\frac12}+\|f_0\|_{\leb^\infty(\mathbb{R})} \mathbb{E}[|Y_N-Y|]. \]

As $f_0$ is continuous on $\mathbb{R}$, $|f_0(xY_N(\omega)- Z_N(\omega))-f_0(xY(\omega)-Z(\omega))|^2\rightarrow0$ as $N\rightarrow\infty$, for a.e. $\omega\in\Omega$. Since $f_0$ is bounded, by the Dominated Convergence Theorem \cite[result 11.32, p.321]{rudin},
\[ \lim_{N\rightarrow\infty} \mathbb{E}[|f_0(xY_N- Z_N)-f_0(xY-Z)|^2]^{\frac12}=0. \]

On the other hand, as a consequence of the Mean Value Theorem used in (\ref{tvm}),
\[ |Y_N(\omega)-Y_M(\omega)|\leq \left(\e^{-K_a(t,\pmb{\xi}_N)}+\e^{-K_a(t,\pmb{\xi}_M)}\right)|K_a(t,\pmb{\xi}_N)-K_a(t,\pmb{\xi}_M)|. \]
By Cauchy-Schwarz inequality and hypothesis H4,
\begin{align*}
 \mathbb{E}[|Y_N-Y_M|]\leq {} & \left(\|\e^{-K_a(t,\pmb{\xi}_N)}\|_{\leb^2(\Omega)}+\|\e^{-K_a(t,\pmb{\xi}_M)}\|_{\leb^2(\Omega)}\right)\|K_a(t,\pmb{\xi}_N)-K_a(t,\pmb{\xi}_M)\|_{\leb^2(\Omega)} \\
\leq {} & 2C \|K_a(t,\pmb{\xi}_N)-K_a(t,\pmb{\xi}_M)\|_{\leb^2(\Omega)}.
\end{align*}
By (\ref{ka}),
\[ \mathbb{E}[|Y_N-Y_M|]\leq 2C\sqrt{T-t_0}\|a_N-a_M\|_{\leb^2([t_0,T]\times\Omega)}. \]
As $a_N\rightarrow a$ in $\leb^2([t_0,T]\times\Omega)$, we conclude that $\mathbb{E}[|Y_N-Y|]\rightarrow0$ as $N\rightarrow\infty$. 

Thus, $\lim_{N\rightarrow\infty}f_1^N(x,t)=g(x,t)$, as wanted. We need to ensure that $g(x,t)$ is a density of $x(t,\omega)$. We know, by the proof of Theorem \ref{teor1}, that truncation (\ref{xNM}), $x_{N,N}(t,\omega)$, tends a.s., hence in law, to $x(t,\omega)$, for all $t\in [t_0,T]$. However, we cannot conclude as in the end of the proof of Theorem \ref{teor1}, because we do not have uniform convergence in order to justify the step from (\ref{FTCD}) to (\ref{FTCD2}). We need an alternative. First, notice that $g(\cdot,t)$ is a density function for each $t\in[t_0,T]$, because
\small
\[ \int_\mathbb{R} g(x,t)\,\dif x=\int_\mathbb{R} \mathbb{E}[f_0(xY-Z)Y]\,\dif x=\mathbb{E}\left[\int_\mathbb{R} f_0(xY-Z)Y\,\dif x\right]=\mathbb{E}\left[\int_\mathbb{R} f_0(x)\,\dif x\right]=1. \]
\normalsize
Now, let $y(t,\omega)$ be a random variable with law given by the density $g(x,t)$. By Scheffé's Lemma \cite[p.55]{scheffe}, $x_{N,N}(t,\omega)$ tends in law to $y(t,\omega)$. Therefore, $y(t,\omega)$ and $x(t,\omega)$ are equal in distribution (the limit in law is unique), so $x(t,\omega)$ is absolutely continuous with density function $f_1(x,t)=g(x,t)$, as wanted.
\end{proof}

As a consequence, for the homogeneous problem (\ref{edo_det}) with $b=0$ we have (recall the definition of $D(x_0)$ in (\ref{Dx0})):
\begin{theorem} \label{teornou2}
Assume the following four hypotheses:
\begin{align*}
 \text{H1}: {} & \;a\in \leb^2([t_0,T]\times\Omega); \\
 \text{H2}: {} & \;x_0 \text{ and }(\xi_1,\ldots,\xi_N) \text{ are absolutely continuous and independent, }N\geq1; \\
 \text{H3}: {} & \;\text{the density function of }x_0\text{, }f_0 \text{, is continuous and bounded on }D(x_0); \\ 
\text{H4}: {} & \;\|\e^{-K_a(t,\pmb{\xi}_N)}\|_{\leb^2(\Omega)}\leq C,\text{ for all }N\geq1\text{ and }t\in [t_0,T].
\end{align*}
Then, for all $x\in\mathbb{R}$ and $t\in [t_0,T]$, the sequence $\{f_1^N(x,t)\}_{N=1}^\infty$ given in (\ref{f1Nhomo}) converges to a density $f_1(x,t)$ of the solution (\ref{xhomo}).
\end{theorem}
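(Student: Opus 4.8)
The plan is to specialize the argument of Theorem \ref{teornou1} to the homogeneous case $b=0$, in which the source contribution $Z_N$ vanishes identically and the sign-preservation of the argument of $f_0$ allows continuity to be demanded only on $D(x_0)$ rather than on all of $\mathbb{R}$. Fix $x\in\mathbb{R}$ and $t\in[t_0,T]$, and set
\[ Y_N(\omega):=\e^{-K_a(t,\pmb{\xi}_N(\omega))},\qquad Y(\omega):=\e^{-\int_{t_0}^t a(s,\omega)\,\dif s}. \]
Then (\ref{f1Nhomo}) reads $f_1^N(x,t)=\mathbb{E}[f_0(xY_N)Y_N]$, and by (\ref{lima}) we have $Y_N\to Y$ a.e.\ as $N\to\infty$. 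I would introduce the candidate limit $g(x,t):=\mathbb{E}[f_0(xY)Y]$ and prove $f_1^N(x,t)\to g(x,t)$ pointwise.

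Second, exactly as in Theorem \ref{teornou1} but now with $Z_N\equiv0$, I would split
\[ |f_1^N(x,t)-g(x,t)|\leq \mathbb{E}[|f_0(xY_N)-f_0(xY)|\,Y_N]+\mathbb{E}[f_0(xY)\,|Y_N-Y|]. \]
Cauchy--Schwarz together with hypothesis H4 bounds the first summand by $C\,\mathbb{E}[|f_0(xY_N)-f_0(xY)|^2]^{1/2}$, while the boundedness of $f_0$ coming from H3 bounds the second by $\|f_0\|_{\leb^\infty(D(x_0))}\,\mathbb{E}[|Y_N-Y|]$.

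Third, I would drive each summand to zero. This is the one place where the weakened continuity hypothesis genuinely enters: since $Y_N,Y>0$, every argument $xY_N$, $xY$ carries the sign of $x$, so it either lies in $D(x_0)$ (where $f_0$ is continuous, whence $f_0(xY_N)\to f_0(xY)$ pointwise) or lies off $D(x_0)$, where $f_0=0$ and both $f_1^N(x,t)$ and $g(x,t)$ vanish; combined with the boundedness of $f_0$, the Dominated Convergence Theorem forces the first summand to $0$. For the second summand I would reproduce the estimate of Theorem \ref{teornou1}: the Mean Value Theorem bound (\ref{tvm}), Cauchy--Schwarz, H4 and (\ref{ka}) give $\mathbb{E}[|Y_N-Y_M|]\leq C\,\|a_N-a_M\|_{\leb^2([t_0,T]\times\Omega)}$, and since $a_N\to a$ in $\leb^2([t_0,T]\times\Omega)$ while $Y_N\to Y$ a.e., it follows that $\mathbb{E}[|Y_N-Y|]\to0$.

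Finally, I would confirm that $g(\cdot,t)$ is the density of the solution (\ref{xhomo}). The substitution $y=xY$ inside the expectation gives $\int_\mathbb{R} g(x,t)\,\dif x=\mathbb{E}[\int_\mathbb{R} f_0(y)\,\dif y]=1$, so $g(\cdot,t)$ is a density. As established at the end of the proof of Theorem \ref{teor1}, the truncation (\ref{truncamm}) converges a.s., hence in law, to $x(t,\omega)$; since $f_1^N$ converges pointwise to the density $g$, Scheff\'e's Lemma yields convergence in law to the law carrying density $g$, and uniqueness of the limit in law identifies $f_1(x,t)=g(x,t)$ as the density of $x(t,\omega)$. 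The only substantive departure from Theorem \ref{teornou1} is the sign-preservation observation confining the arguments of $f_0$ to $D(x_0)$; everything else transcribes directly with $Z_N\equiv0$, so I expect that step to be the sole genuine obstacle.
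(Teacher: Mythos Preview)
Your proposal is correct and follows precisely the route the paper intends: the paper presents Theorem \ref{teornou2} as an immediate consequence of Theorem \ref{teornou1} for $b=0$, and your write-up supplies exactly the missing detail, namely that $Y_N,Y>0$ forces $xY_N,xY$ to share the sign of $x$, so the argument of $f_0$ either lies in $D(x_0)$ (where H3 furnishes continuity and boundedness) or lies in the complement where $f_0$ vanishes identically. Everything else---the split into two summands, the use of H4 with Cauchy--Schwarz, the Dominated Convergence Theorem, the $\leb^1$ Cauchy estimate for $Y_N$ via (\ref{tvm}) and (\ref{ka}), and the Scheff\'e argument to identify the limit---is transcribed verbatim from the proof of Theorem \ref{teornou1}, as you correctly observe.
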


For the random homogeneous problem (\ref{edo_det}) with $b=0$, the following theorem imposes a growth condition on $f_0$ (which is usually accomplished), so that hypothesis H4 of Theorem \ref{teornou2} can be avoided. Probably, this is the most general theorem on the random non-autonomous homogeneous linear differential equation in this paper:
\begin{theorem} \label{avo}
Assume the following three hypotheses:
\begin{align*}
 \text{H1}: {} & \;a\in \leb^2([t_0,T]\times\Omega); \\
 \text{H2}: {} & \;x_0 \text{ and }(\xi_1,\ldots,\xi_N) \text{ are absolutely continuous and independent, }N\geq1; \\
 \text{H3}: {} & \;\text{the density function of }x_0\text{, }f_0 \text{, is continuous on }D(x_0) \\ 
{} & \,\text{and }f_0(x)\leq C/|x|,\text{ for all }0\neq x\in D(x_0).
\end{align*}
Then, for all $0\neq x\in\mathbb{R}$ and $t\in [t_0,T]$, the sequence $\{f_1^N(x,t)\}_{N=1}^\infty$ given in (\ref{f1Nhomo}) converges to a density $f_1(x,t)$ of the solution (\ref{xhomo}).
\end{theorem}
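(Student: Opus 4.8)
The plan is to follow the structure of the proof of Theorem \ref{teornou1}, but to exploit the growth condition in H3 to manufacture a \emph{constant} dominating function, thereby dispensing with the uniform $\leb^2$-bound of the type H4. Fix $0\neq x\in\mathbb{R}$ and $t\in[t_0,T]$, and set $Y_N(\omega)=\e^{-K_a(t,\pmb{\xi}_N(\omega))}>0$, so that expression (\ref{f1Nhomo}) reads $f_1^N(x,t)=\mathbb{E}[f_0(xY_N)Y_N]$. By (\ref{lima}) one has $Y_N(\omega)\to Y(\omega):=\e^{-\int_{t_0}^t a(s,\omega)\,\dif s}$ for a.e.\ $\omega\in\Omega$, and since $a(\cdot,\omega)\in\leb^1([t_0,T])$ a.s., the limit satisfies $0<Y(\omega)<\infty$ a.s. The natural candidate for the limit is $g(x,t):=\mathbb{E}[f_0(xY)Y]$.

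The decisive step is a single application of the Dominated Convergence Theorem. Because $Y_N>0$, the argument $xY_N$ has the sign of $x$; where $xY_N$ lies in $D(x_0)$ the growth bound in H3 gives $f_0(xY_N)\leq C/|xY_N|$, while elsewhere $f_0(xY_N)=0$. In either case, for every $N$ and a.e.\ $\omega$,
\[ 0\leq f_0(xY_N)\,Y_N\leq \frac{C}{|xY_N|}\,Y_N=\frac{C}{|x|}, \]
which is a constant, hence integrable on $(\Omega,\mathcal{F},\mathbb{P})$ and independent of $N$. This uniform domination is precisely what replaces H4. For the pointwise convergence of the integrand I would use that $xY(\omega)\neq0$ lies on the correct sign-branch of $D(x_0)$ together with the continuity of $f_0$ there (and the trivial convergence $0\to0$ on the complementary branch), so that $f_0(xY_N(\omega))Y_N(\omega)\to f_0(xY(\omega))Y(\omega)$ for a.e.\ $\omega$. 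The Dominated Convergence Theorem then yields $\lim_{N\to\infty}f_1^N(x,t)=g(x,t)$ for each fixed $0\neq x\in\mathbb{R}$ and $t\in[t_0,T]$.

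It remains to identify $g(\cdot,t)$ as a density of $x(t,\omega)$. First, $g(\cdot,t)$ is itself a density: by Tonelli's theorem and the change of variables $u=xY$ (for fixed $\omega$, with $Y>0$, so $\dif u=Y\,\dif x$),
\[ \int_\mathbb{R} g(x,t)\,\dif x=\mathbb{E}\left[\int_\mathbb{R} f_0(xY)\,Y\,\dif x\right]=\mathbb{E}\left[\int_\mathbb{R} f_0(u)\,\dif u\right]=1. \]
As in the proof of Theorem \ref{teor1}, the truncation $x_N(t,\omega)$ in (\ref{truncamm}) converges to $x(t,\omega)$ in (\ref{xhomo}) a.s., hence in law. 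Since $\{f_1^N(\cdot,t)\}_{N=1}^\infty$ are densities converging a.e.\ (that is, for all $x\neq0$) to the density $g(\cdot,t)$, Scheff\'e's Lemma, exactly as invoked at the end of the proof of Theorem \ref{teornou1}, yields convergence in law of $x_N(t,\omega)$ to a variable with density $g(\cdot,t)$; by uniqueness of the limit in law, $x(t,\omega)$ is absolutely continuous with density $f_1(x,t)=g(x,t)$.

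The only delicate point, and the one I would treat most carefully, is the domination step: it hinges on the observation that $f_0(xY_N)Y_N\leq C/|x|$, valid precisely because $Y_N>0$ confines $xY_N$ to the sign-branch of $D(x_0)$ on which the hypothesis $f_0(u)\leq C/|u|$ applies. One should also verify that the exceptional null set (where $Y_N\to Y$ fails, or where $Y\in\{0,\infty\}$) is harmless, which is immediate from $0<Y<\infty$ a.s.
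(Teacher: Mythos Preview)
Your proposal is correct and follows essentially the same approach as the paper: rewrite $f_1^N(x,t)=\mathbb{E}[f_0(xY_N)Y_N]$, use the growth condition $f_0(u)\leq C/|u|$ together with $Y_N>0$ to obtain the constant dominating function $C/|x|$, apply the Dominated Convergence Theorem, and then identify the limit as a density via Scheff\'e's Lemma exactly as at the end of Theorem~\ref{teornou1}. Your treatment of the sign-branch cases for $D(x_0)$ and your explicit verification that $\int_{\mathbb{R}} g(x,t)\,\dif x=1$ are slightly more detailed than the paper's version, but the argument is the same.
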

\begin{proof}
Fix $0\neq x\in\mathbb{R}$ and $t\in [t_0,T]$. By (\ref{f1Nhomo}), we may write $f_1^N(x,t)=\mathbb{E}[f_0(x Y_N)Y_N]$, where $Y_N(\omega)$ and $Y(\omega)$ are the same as in the proof of Theorem \ref{teornou1}. Recall that $Y_N(\omega)\rightarrow Y(\omega)$ as $N\rightarrow\infty$, for a.e. $\omega\in\Omega$. By the continuity of $f_0$ on $D(x_0)$, $f_0(x Y_N(\omega))Y_N(\omega)\rightarrow f_0(x Y(\omega))Y(\omega)$ as $N\rightarrow\infty$, for a.e. $\omega\in\Omega$. Since
\[ f_0(x Y_N)Y_N=\frac{1}{|x|} f_0(x Y_N)|x|Y_N\leq \frac{C}{|x|}, \]
the Dominated Convergence Theorem does the rest: 
\[ \lim_{N\rightarrow\infty}f_1^N(x,t)=\mathbb{E}[f_0(xY)Y]=:g(x,t). \]
As in the end of the proof of Theorem \ref{teornou1}, one shows that $g(x,t)$ is a density function of the solution stochastic process $x(t,\omega)$.
\end{proof}

A second result for the random complete linear differential equation is the following reformulation of Theorem \ref{teor3}. The Lipschitz hypothesis is substituted by continuity and boundedness, although the uniform convergence is replaced by pointwise convergence.

\begin{theorem} \label{teornou3}
Assume that
\begin{align*}
 \text{H1}: {} & \;a,b\in \leb^2([t_0,T]\times\Omega); \\
 \text{H2}: {} & \;x_0,\,\eta_1,\,(\xi_1,\ldots,\xi_N,\eta_2,\ldots,\eta_N) \text{ are absolutely continuous and independent, }N\geq1; \\
 \text{H3}: {} & \;\text{the density function of }\eta_1\text{, }f_{\eta_1} \text{, is continuous and bounded on }\mathbb{R}; \\
 \text{H4}: {} & \;\xi_1,\xi_2,\ldots \text{ have compact support in }[-A,A]\, (A>0) \text{ and }\psi_1>0\text{ on }(t_0,T). 
\end{align*}
Then, for each fixed $t\in (t_0,T]$ and $x\in\mathbb{R}$, the sequence $\{f_1^N(x,t)\}_{N=1}^\infty$ given in (\ref{f1nou}) (which is the same as (\ref{f1n})) converges to a density $f_1(x,t)$ of the solution (\ref{sol_ale}).
\end{theorem}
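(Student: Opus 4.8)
The plan is to follow the proof of Theorem~\ref{teornou1} almost verbatim, replacing the Lipschitz-plus-Cauchy estimate by a pointwise application of the Dominated Convergence Theorem, and using the compact-support hypothesis H4 to turn the random bounds (\ref{eka}) and (\ref{infe}) into \emph{deterministic} ones. First I would rewrite the density (\ref{f1nou}) as an expectation. By H2 the factor $f_0(x_0)\,f_{(\xi_1,\ldots,\xi_N,\eta_2,\ldots,\eta_N)}$ is the joint density of $(x_0,\xi_1,\ldots,\xi_N,\eta_2,\ldots,\eta_N)$, so
\[
 f_1^N(x,t)=\mathbb{E}\!\left[f_{\eta_1}(U_N)\,V_N\right],
\]
where, writing $\hat{b}_N(s,\omega)=\mu_b(s)+\sum_{i=2}^N\sqrt{\gamma_i}\,\psi_i(s)\eta_i(\omega)$ and $W_N(\omega)=\int_{t_0}^t\psi_1(s)\e^{-K_a(s,\pmb{\xi}_N)}\,\dif s$,
\[
 U_N=\frac{x\,\e^{-K_a(t,\pmb{\xi}_N)}-x_0-\int_{t_0}^t\hat{b}_N(s,\omega)\e^{-K_a(s,\pmb{\xi}_N)}\,\dif s}{\sqrt{\gamma_1}\,W_N},\qquad V_N=\frac{\e^{-K_a(t,\pmb{\xi}_N)}}{\sqrt{\gamma_1}\,W_N}.
\]
Note that neither $U_N$ nor $V_N$ involves $\eta_1$, so $f_{\eta_1}(\cdot)$ enters only as the (continuous and bounded, by H3) density of $\eta_1$, and the expectation is over the remaining variables.

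Next I would identify the almost-sure limits. Reusing the convergences already obtained in the proofs of Theorem~\ref{teor1} and Theorem~\ref{teor3}, for a.e.\ $\omega$ one has $\e^{-K_a(s,\pmb{\xi}_N)}\to\e^{-\int_{t_0}^s a(r,\omega)\,\dif r}$ for every $s$; the uniform bound (\ref{eka}) (available precisely because the $\xi_j$ have compact support) together with Dominated Convergence in $s$ then gives $W_N\to W:=\int_{t_0}^t\psi_1(s)\e^{-\int_{t_0}^s a(r,\omega)\,\dif r}\,\dif s$, while the lower bound (\ref{assum})--(\ref{infe}), being uniform in $N$, passes to the limit so that $W\ge C(t)>0$ a.s. Setting $\hat{b}=\mu_b+\sum_{i=2}^\infty\sqrt{\gamma_i}\,\psi_i\eta_i$, one has $\hat{b}_N-\hat{b}=b_N-b\to0$ in $\leb^2([t_0,T])$ for a.e.\ $\omega$ (the $\eta_1$ terms cancel), so an argument analogous to (\ref{limb}) shows that $\int_{t_0}^t\hat{b}_N\e^{-K_a(s,\pmb{\xi}_N)}\,\dif s$ converges a.s.\ to the corresponding integral with $\hat{b}$ and $\e^{-\int a}$. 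Hence $U_N\to U$ and $V_N\to V$ a.s., where $U,V$ are the evident limits with $W_N$ replaced by $W$.

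Then I would invoke the Dominated Convergence Theorem. By H4 and the bounds (\ref{eka})--(\ref{infe}), exactly as in (\ref{terme2}), $V_N\le C$ for a constant independent of $N$; combined with the boundedness of $f_{\eta_1}$ this yields the integrable (constant) dominating function $f_{\eta_1}(U_N)V_N\le C\,\|f_{\eta_1}\|_{\leb^\infty(\mathbb{R})}$. Continuity of $f_{\eta_1}$ gives $f_{\eta_1}(U_N)V_N\to f_{\eta_1}(U)V$ a.s., so $\lim_{N\to\infty}f_1^N(x,t)=\mathbb{E}[f_{\eta_1}(U)\,V]=:g(x,t)$. To identify $g$ as the density of the solution I would argue as at the end of the proof of Theorem~\ref{teornou1}. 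For fixed $\omega$, $U$ is affine in $x$ with $\partial U/\partial x=V>0$, so the substitution $u=U$ gives $\int_{\mathbb{R}}f_{\eta_1}(U)V\,\dif x=\int_{\mathbb{R}}f_{\eta_1}(u)\,\dif u=1$; Tonelli then yields $\int_{\mathbb{R}}g(x,t)\,\dif x=1$, so $g(\cdot,t)$ is a genuine density. Since $x_{N,N}(t,\omega)\to x(t,\omega)$ a.s.\ (hence in law) by the proof of Theorem~\ref{teor1}, and $f_1^N(\cdot,t)\to g(\cdot,t)$ pointwise with $g(\cdot,t)$ a density, Scheff\'e's Lemma shows that $x_{N,N}(t,\cdot)$ converges in law to the distribution with density $g(\cdot,t)$; uniqueness of the limit in law identifies $f_1(x,t)=g(x,t)$ as the density of $x(t,\omega)$.

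The \emph{main obstacle} I anticipate is the a.s.\ convergence $U_N\to U$, and in particular keeping the denominator $W_N$ bounded away from zero uniformly in $N$ while passing to the limit. This is exactly where H4 is essential: the compact support of the $\xi_j$ together with $\psi_1>0$ converts the otherwise $\omega$-dependent bounds (\ref{eka}) and (\ref{infe}) into deterministic ones, which simultaneously supplies the domination for the Dominated Convergence Theorem and the strict positivity of the limiting denominator $W$.
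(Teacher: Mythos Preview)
Your proposal is correct and follows essentially the same approach as the paper: you rewrite (\ref{f1nou}) as an expectation $\mathbb{E}[f_{\eta_1}(U_N)V_N]$, use H4 via (\ref{eka}) and (\ref{infe}) to get a deterministic bound on $V_N$, apply the Dominated Convergence Theorem (using continuity and boundedness of $f_{\eta_1}$), and then identify the limit as the density of $x(t,\omega)$ via Scheff\'e's Lemma exactly as in Theorem~\ref{teornou1}. The paper's proof uses the same random variables (denoted $Y_N,Z_N$ instead of your $V_N,U_N$) and the same chain of reasoning; your write-up is slightly more explicit about why $\int_{\mathbb{R}} g(x,t)\,\dif x=1$ and about the a.s.\ convergence of the integral term, but there is no substantive difference.
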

\begin{proof}
Fix $x\in\mathbb{R}$ and $t\in (t_0,T]$. From the expression (\ref{f1nou}), define
\[ Y_N(\omega):=\frac{\e^{-K_a(t,\pmb{\xi}_N(\omega))}}{\sqrt{\gamma_1}\int_{t_0}^t \psi_1(s)\e^{-K_a(s,\pmb{\xi}_N(\omega))}\,\dif s} \]
and 
\[ Z_N(\omega):=\frac{x\e^{-K_a(t,\pmb{\xi}_N(\omega))}-x_0(\omega)-\int_{t_0}^t\left(\mu_b(s)+\sum_{i=2}^N\sqrt{\gamma_i}\,\psi_i(s)\eta_i(\omega)\right)\e^{-K_a(s,\pmb{\xi}_N(\omega))}\,\dif s}{\sqrt{\gamma_1}\int_{t_0}^t \psi_1(s)\e^{-K_a(s,\pmb{\xi}_N(\omega))}\,\dif s}. \]
As in (\ref{lima}) and (\ref{limb}) (details are left),
\[ \lim_{N\rightarrow\infty} Y_N(\omega)= \frac{\e^{-\int_{t_0}^t a(s,\omega)\,\dif s}}{\sqrt{\gamma_1}\int_{t_0}^t \psi_1(s)\e^{-\int_{t_0}^s a(r,\omega)\,\dif r}\,\dif s}=:Y(\omega) \]
and
\small
\begin{align*}
 \lim_{N\rightarrow\infty} Z_N(\omega)= {} & \frac{x\e^{-\int_{t_0}^t a(s,\omega)\,\dif s}-x_0(\omega)-\int_{t_0}^t\left(\mu_b(s)+b(s,\omega)-b_1(s,\omega)\right)\e^{-\int_{t_0}^s a(r,\omega)\,\dif r}\,\dif s}{\sqrt{\gamma_1}\int_{t_0}^t \psi_1(s)\e^{-\int_{t_0}^s a(r,\omega)\,\dif r}\,\dif s} \\
=:{}&Z(\omega), 
\end{align*}
\normalsize
for a.e. $\omega\in\Omega$. By (\ref{f1nou}), $f_1^N(x,t)=\mathbb{E}[f_{\eta_1}(Z_N)Y_N]$. As $\xi_1,\xi_2,\ldots$ live in $[-A,A]$, we can use bound (\ref{eka}) again to conclude that $|Y_N(\omega)|\leq B$ for certain $B>0$, for all $N$. By the continuity of $f_{\eta_1}$, $f_{\eta_1}(Z_N(\omega))Y_N(\omega)\rightarrow f_{\eta_1}(Z(\omega))Y(\omega)$ as $N\rightarrow\infty$, for a.e. $\omega\in\Omega$. As $|f_{\eta_1}(Z_N(\omega))Y_N(\omega)|\leq B\|f_{\eta_1}\|_{\leb^\infty(\mathbb{R})}$, by the Dominated Convergence Theorem, $\lim_{N\rightarrow\infty} f_1^N(x,t)=\mathbb{E}[f_{\eta_1}(Z)Y]=:g(x,t)$. To check that $g(x,t)$ is indeed a density of $x(t,\omega)$, one concludes as in the proof of Theorem \ref{teornou1}.
\end{proof}

A third and final result for the homogeneous problem is the following. It substitutes the Lipschitz hypothesis of Theorem \ref{teorb0L} by the weaker assumptions of continuity and boundedness, but we loose the uniform convergence on compact sets not containing $0$. The proof is very similar to that of Theorem \ref{teornou1}.
\begin{theorem} \label{teornoub0L}
Assume that:
\begin{align*}
 \text{H1}: {} & \;a\in \leb^2([t_0,T]\times\Omega); \\
 \text{H2}: {} & \;x_0,\;\xi_1\text{ and }(\xi_2,\ldots,\xi_N) \text{ are absolutely continuous and independent, }N\geq2; \\
 \text{H3}: {} & \;\text{the density function of }\xi_1\text{, }f_{\xi_1} \text{, is continuous and bounded on }\mathbb{R}; \\
 \text{H4}: {} & \; \int_{t_0}^t\phi_1(s)\,\dif s\neq0\text{ for all }t\in (t_0,T]; \\
 \text{H5}: {} & \; \text{either $x_0(\omega)>0$ for a.e. $\omega\in\Omega$ or $x_0(\omega)<0$ for a.e. $\omega\in\Omega$}.
\end{align*}
Then, for each fixed $t\in (t_0,T]$ and $x\neq0$, the sequence $\{f_1^N(x,t)\}_{N=1}^\infty$ given in (\ref{f1ncall}) (which is the same as (\ref{f1Nhomo})) converges to a density $f_1(x,t)$ of the solution stochastic process (\ref{xhomo}).
\end{theorem}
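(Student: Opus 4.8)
The plan is to mimic the proof of Theorem \ref{teornou1}, but now \emph{isolating} $\xi_1$ and using the representation (\ref{f1ncall}) of $f_1^N(x,t)$ in place of (\ref{f1n}). First I would fix $t\in(t_0,T]$ and $x\neq0$. The key observation is that the prefactor $\frac{1}{|x|\sqrt{\nu_1}\,|\int_{t_0}^t\phi_1(s)\,\dif s|}$ appearing in (\ref{f1ncall}) is a \emph{constant}, independent of $N$ and of the integration variables, and finite for every $t\in(t_0,T]$ by H4. Hypothesis H5 makes the sign of $x_0$ deterministic, so for $x$ sharing that sign the domain $I_{\mathrm{sign}(x)}$ in (\ref{f1ncall}) covers the whole support of $f_0$, whereas for $x$ of the opposite sign $f_1^N(x,t)=0$ for every $N$ (consistently with the one-signed support of $x(t,\omega)=x_0(\omega)\,\e^{\int_{t_0}^t a(s,\omega)\,\dif s}$). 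Setting
\[ W_N(\omega)=\frac{1}{\sqrt{\nu_1}\int_{t_0}^t\phi_1(s)\,\dif s}\left\{\log\frac{x}{x_0(\omega)}-\int_{t_0}^t\mu_a(s)\,\dif s-\sum_{j=2}^N\sqrt{\nu_j}\left(\int_{t_0}^t\phi_j(s)\,\dif s\right)\xi_j(\omega)\right\}, \]
expression (\ref{f1ncall}) reduces to the plain expectation $f_1^N(x,t)=\frac{1}{|x|\sqrt{\nu_1}\,|\int_{t_0}^t\phi_1(s)\,\dif s|}\,\mathbb{E}[f_{\xi_1}(W_N)]$ on the admissible side, with no surviving indicator thanks to H5.

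To pass to the limit I would invoke Dominated Convergence on $(\Omega,\mathcal{F},\mathbb{P})$. By (\ref{lima}) one has $K_a(t,\pmb{\xi}_N(\omega))\to\int_{t_0}^t a(s,\omega)\,\dif s$ for a.e.\ $\omega$, so the truncated series inside $W_N$ converges a.e.\ and $W_N(\omega)\to W(\omega)$ a.e., where $W$ is obtained by letting the finite sum run to $\infty$. Continuity of $f_{\xi_1}$ (H3) gives $f_{\xi_1}(W_N)\to f_{\xi_1}(W)$ a.e.; since $f_{\xi_1}$ is bounded and the underlying measure is a probability measure, the constant $\|f_{\xi_1}\|_{\leb^\infty(\mathbb{R})}$ is an integrable dominating function, hence
\[ f_1^N(x,t)\longrightarrow g(x,t):=\frac{1}{|x|\sqrt{\nu_1}\,|\int_{t_0}^t\phi_1(s)\,\dif s|}\,\mathbb{E}[f_{\xi_1}(W)] \]
on the admissible side, and $g(x,t)=0$ otherwise. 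This settles the asserted pointwise convergence for $x\neq0$.

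It remains to identify $g(\cdot,t)$ as a density of the solution (\ref{xhomo}), which I would do exactly as at the end of Theorem \ref{teornou1}. First I would check $\int_\mathbb{R} g(x,t)\,\dif x=1$: by Fubini and, for each fixed $\omega$, the change of variables $w=W$ (whose $x$-derivative equals $1/(x\sqrt{\nu_1}\int_{t_0}^t\phi_1)$), the inner $x$-integral over the admissible half-line sweeps all of $\mathbb{R}$ in the variable $w$ and collapses to $\int_\mathbb{R} f_{\xi_1}(w)\,\dif w=1$, whence $\int_\mathbb{R} g(x,t)\,\dif x=\mathbb{E}[1]=1$. Knowing that $g(\cdot,t)$ is a genuine density, and recalling from the proof of Theorem \ref{teor1} (reused in Theorem \ref{teorb0L}) that the truncation $x_N(t,\cdot)$ of (\ref{truncamm}) converges a.s., hence in law, to $x(t,\cdot)$, I would apply Scheff\'e's Lemma: the pointwise convergence $f_1^N\to g$ of densities of total mass $1$ upgrades to $\leb^1(\mathbb{R})$ convergence, so $x_N(t,\cdot)$ converges in law to a variable with density $g(\cdot,t)$. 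Uniqueness of the limit in law then forces $x(t,\cdot)$ to be absolutely continuous with $f_1(x,t)=g(x,t)$.

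The part requiring the most care is the sign/domain bookkeeping together with the change of variables proving $\int_\mathbb{R} g=1$: one must use H5 to guarantee that $x_0$ is a.s.\ of a fixed sign, so that for $x$ of that sign $\log(x/x_0)$ is a.s.\ well defined and the map $x\mapsto w$ is a bijection of the admissible half-line onto $\mathbb{R}$, leaving no boundary contribution and no residual indicator. By contrast, the Dominated Convergence step is routine here, since the Jacobian prefactor is constant in $\omega$ and $f_{\xi_1}$ is bounded on a probability space; the only genuine analytic input is the a.e.\ convergence $K_a(t,\pmb{\xi}_N)\to\int_{t_0}^t a$ already recorded in (\ref{lima}).
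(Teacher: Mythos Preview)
Your proof is correct and follows essentially the same approach as the paper: rewrite (\ref{f1ncall}) as a constant times $\mathbb{E}[f_{\xi_1}(W_N)]$, pass to the limit, and identify the limit as a density via Scheff\'e's Lemma as in Theorem \ref{teornou1}. The only difference is that the paper, after noting that Dominated Convergence works exactly as you describe, additionally presents an alternative argument for the limit step based on the characterization of convergence in law via continuous bounded test functions (taking $H=\text{const}\cdot f_{\xi_1}$), remarking that this alternative would remain valid even if only convergence in law of $W_N$ were available.
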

\begin{proof}
We assume $x_0(\omega)>0$ for a.e. $\omega\in\Omega$ (the other case is analogous). From expression (\ref{f1ncall}), we deduce that $f_1^N(x,t)=0$ if $x<0$. Thus, it suffices to consider the case $x>0$, so that (\ref{f1ncall}) becomes
\footnotesize
\begin{align}
 {} & f_1^N(x,t) \nonumber \\
= {} & \int_{\mathbb{R}^{N-1}\times (0,\infty)} f_{\xi_1}\left(\frac{1}{\sqrt{\nu_1}\,\int_{t_0}^t\phi_1(s)\,\dif s} \left\{\log\left(\frac{x}{x_0}\right)-\int_{t_0}^t\mu_a(s)\,\dif s-\sum_{j=2}^N\sqrt{\nu_j}\,\left(\int_{t_0}^t\phi_j(s)\,\dif s\right)\xi_j\right\}\right) \nonumber \\
\cdot {} & f_{(\xi_2,\ldots,\xi_N)}(\xi_2,\ldots,\xi_N)f_0(x_0)\frac{1}{|x|\sqrt{\nu_1}\,\left|\int_{t_0}^t \phi_1(s)\,\dif s\right|}\,\dif \xi_2\cdots\dif \xi_N\,\dif x_0, \label{f1ncall2}
\end{align}
\normalsize
Let
\small
\[ Y_N(\omega):=\frac{1}{\sqrt{\nu_1}\,\int_{t_0}^t\phi_1(s)\,\dif s} \left\{\log\left(\frac{x}{x_0(\omega)}\right)-\int_{t_0}^t\mu_a(s)\,\dif s-\sum_{j=2}^N\sqrt{\nu_j}\,\left(\int_{t_0}^t\phi_j(s)\,\dif s\right)\xi_j(\omega)\right\}. \]
\normalsize
Then, by (\ref{f1ncall2}),
\[ f_1^N(x,t)=\frac{1}{|x|\sqrt{\nu_1}\,\left|\int_{t_0}^t \phi_1(s)\,\dif s\right|}\mathbb{E}[f_{\xi_1}(Y_N)]. \]
We have that
\small
\begin{align*}
 \lim_{N\rightarrow \infty} Y_N(\omega)= {} & \frac{1}{\sqrt{\nu_1}\,\int_{t_0}^t\phi_1(s)\,\dif s} \left\{\log\left(\frac{x}{x_0(\omega)}\right)-\int_{t_0}^t\mu_a(s)\,\dif s-\int_{t_0}^t (a(s,\omega)-a_1(s,\omega))\,\dif s\right\} \\
=:{}&Y(\omega), 
\end{align*}
\normalsize
for a.e. $\omega\in\Omega$. 

One could use again the Dominated Convergence Theorem, as in Theorem \ref{teornou1}, to conclude. We present another reasoning which we believe is interesting. We use the following facts: the a.s. limit implies limit in law, and a sequence $\{X_n\}_{n=1}^\infty$ of random vectors with $p$ components tends in law to $X$ if and only if $\lim_{n\rightarrow\infty}\mathbb{E}[H(X_n)] =\mathbb{E}[H(X)]$ for every continuous and bounded map $H$ on $\mathbb{R}^p$ \cite[p.144 (iii)]{counter}. Taking 
\[ H(y):=\frac{1}{|x|\sqrt{\nu_1}\,\left|\int_{t_0}^t \phi_1(s)\,\dif s\right|}f_{\xi_1}(y), \]
which is continuous and bounded on $\mathbb{R}$, we deduce that 
\[ \lim_{N\rightarrow\infty} f_1^N(x,t)=\mathbb{E}[H(Y)]=:g(x,t). \]
This reasoning uses strongly the convergence in law, rather than the a.s. convergence necessary to apply the Dominated Convergence Theorem. These ideas are useful for random differential equations in which the truncation converges in law, but not a.s. (for the random linear differential equation (\ref{edo_det}) this is not the case, so the Dominated Convergence Theorem is applicable, as we did).
\end{proof}

\subsection{Comments on the hypotheses of the theorems} \label{comments} \ \\

The hypotheses of the theorems in Subsection \ref{su5} are weaker than the hypotheses of the theorems from the previous subsections. In the theorems from Subsection \ref{su5}, one has to put conditions so that the Dominated Convergence Theorem can be applied in an expectation, essentially. This yields a pointwise convergence of the approximating sequence of density functions. However, when dealing with uniform convergence in Subsection \ref{su1}, \ref{su2}, \ref{su3} and \ref{su4}, the Lipschitz condition plus other assumptions are necessary. 

In terms of numerical experiments, every time Theorems \ref{teor1}, \ref{teor2}, \ref{teor3} and \ref{teorb0L} are applicable, Theorems \ref{teornou1}, \ref{teornou2}, \ref{teornou3} and \ref{teornoub0L} are applicable too, respectively.

The Lipschitz (or continuity) condition on $\mathbb{R}$ is satisfied by the probability density function of some named distributions:
\begin{itemize}
\item $\text{Normal}(\mu,\sigma^2)$, $\mu\in\mathbb{R}$ and $\sigma^2>0$.
\item $\text{Beta}(\alpha,\beta)$, $\alpha,\beta\geq2$.
\item $\text{Gamma}(\alpha,\beta)$, $\alpha\geq2$ and $\beta>0$.
\end{itemize}
In general, any density with bounded derivative on $\mathbb{R}$ satisfies the Lipschitz condition on $\mathbb{R}$, by the Mean Value Theorem. 

Some non-Lipschitz (and non-continuous) density functions on $\mathbb{R}$ are the uniform distribution, the exponential distribution... or any other density with a jump discontinuity at some point of $\mathbb{R}$. Notice however that, if $x_0$ is an exponentially distributed initial condition, then $f_0$ is Lipschitz on $D(x_0)=(0,\infty)$. Nevertheless, if $x_0$ is a uniform random variable, then $f_0$ remains being non-continuous on $D(x_0)$.

In Theorem \ref{teor3} and Theorem \ref{teorb0L}, for instance, we included the hypotheses $f_{\eta_1}$ and $f_{\xi_1}$ Lipschitz, respectively. It must be clear that $\xi_1$ and $\eta_1$ are not important, in the sense that, if some $\xi_k$ or $\eta_l$ satisfies the hypotheses for certain $k$ and $l$ not equal to $1$, then we may reorder the pairs of eigenvalues and eigenfunctions of the Karhunen-Loève expansions of $a$ and $b$ so that the necessary hypotheses hold. 

Let us see examples of processes $a$ for which, given any $c\in\mathbb{R}$, there is a constant $C>0$ such that the inequality $\mathbb{E}[\e^{c\,K_a(t,\pmb{\xi}_N)}]\leq C$ holds for all $N\geq1$ and $t\in [t_0,T]$:
\begin{itemize}
\item Case $\xi_1,\xi_2,\ldots$ are independent and $\text{Normal}(0,1)$ distributed, that is, when $a$ is a Gaussian process. Indeed, taking into account that the moment generating function of a Gaussian random variable $X$ with zero expectation and unit variance is given by $\mathbb{E}[\e^{\lambda X}]=\e^{\lambda^2/2}$, and using Cauchy-Schwarz inequality, one deduces
\begin{align*}
\mathbb{E}[\e^{c\,K_a(t,\pmb{\xi}_N)}]= {} & \e^{c\int_{t_0}^t \mu_a(s)\,\dif s}\mathbb{E}\left[\prod_{j=1}^N \e^{c\,\xi_j\sqrt{\nu_j}\, \int_{t_0}^t\phi_j(s)\,\dif s}\right]\\
= {} & \e^{c\int_{t_0}^t \mu_a(s)\,\dif s}\prod_{j=1}^N \mathbb{E}\left[\e^{c\,\xi_j\sqrt{\nu_j}\, \int_{t_0}^t\phi_j(s)\,\dif s}\right]\\
= {} & \e^{c\int_{t_0}^t \mu_a(s)\,\dif s}\prod_{j=1}^N \e^{\frac12 c^2\nu_j\left(\int_{t_0}^t\phi_j(s)\,\dif s\right)^2}\\
\leq {} & \e^{c\int_{t_0}^t \mu_a(s)\,\dif s} \e^{\frac12 c^2(T-t_0)\sum_{j=1}^N\nu_j\int_{t_0}^t\phi_j(s)^2\,\dif s}\\
\leq {} & \e^{c\int_{t_0}^t \mu_a(s)\,\dif s} \e^{\frac12 c^2(T-t_0)\int_{t_0}^T\left(\sum_{j=1}^N\nu_j\phi_j(s)^2\right)\,\dif s}.
\end{align*}
Now we bound each of the terms. First observe that, by Cauchy-Schwarz inequality,
\[\mu_a(s)^2=\mathbb{E}[a(s)]^2\leq \mathbb{E}[a(s)^2], \]
therefore, by Cauchy-Schwarz inequality again, one gets
\small
\begin{equation}
\int_{t_0}^T|\mu_a(s)|\,\dif s\leq \sqrt{T-t_0}\,\left(\int_{t_0}^T\mu_a(s)^2\,\dif s\right)^{\frac12}\leq \sqrt{T-t_0}\,\|a\|_{\leb^2([t_0,T]\times \Omega)}<\infty. 
 \label{intmua}
\end{equation}
\normalsize
For the other term, we have
\small
\begin{align*} 
\int_{t_0}^T\left(\sum_{j=1}^N\nu_j\phi_j(s)^2\right)\,\dif s\leq {} & \int_{t_0}^T\left(\sum_{j=1}^\infty\nu_j\phi_j(s)^2\right)\,\dif s \\
= {} & \int_{t_0}^T\left(\mathbb{E}[a(s)^2]-\mu_a(s)^2\right)\,\dif s \\
= {} & \|a\|_{\leb^2([t_0,T]\times \Omega)}^2-\|\mu_a\|_{\leb^2([t_0,T])}^2<\infty. 
\end{align*}
\normalsize

\item Case $\xi_1,\xi_2,\ldots$ are compactly supported in $[\alpha,\beta]$. In this case,
\begin{align*}
\quad \e^{c\,K_a(t,\pmb{\xi}_N(\omega))}= {} & \e^{c\int_{t_0}^t\left(\mu_a(s)+\sum_{j=1}^N \sqrt{\nu_j}\phi_j(s)\xi_j(\omega)\right)\,\dif s}\leq \e^{c\int_{t_0}^T |\mu_a(s)|\,\dif s} \e^{c\sum_{j=1}^N\sqrt{\nu_j}\left|\int_{t_0}^t \phi_j(s)\,\dif s\right||\xi_j(\omega)|} \\
\leq {} & \e^{c\int_{t_0}^T |\mu_a(s)|\,\dif s}\e^{c\max\{|\alpha|,|\beta|\}\sum_{j=1}^N \sqrt{\nu_j}\left|\int_{t_0}^t \phi_j(s)\,\dif s\right|} \\
\leq {} & \e^{c\int_{t_0}^T |\mu_a(s)|\,\dif s}\e^{c\max\{|\alpha|,|\beta|\}\sum_{j=1}^\infty \sqrt{\nu_j}\left|\int_{t_0}^t \phi_j(s)\,\dif s\right|}. 
\end{align*}
By (\ref{intmua}), $\int_{t_0}^T |\mu_a(s)|\,\dif s<\infty$. By Remark \ref{rmk}, 
\[ \sup_{t\in[t_0,T]}\sum_{j=1}^\infty \sqrt{\nu_j}\left|\int_{t_0}^t \phi_j(s)\,\dif s\right|<\infty. \] 
Thereby, there is a constant $C>0$ such that $\e^{c\,K_a(t,\pmb{\xi}_N(\omega))}\leq C$, for a.e. $\omega\in\Omega$, $N\geq1$ and $t\in [t_0,T]$. Applying expectations, $\mathbb{E}[\e^{c\,K_a(t,\pmb{\xi}_N(\omega))}]\leq C$, for every $N\geq1$ and $t\in [t_0,T]$.

If we add the hypothesis of independence to compactly supported in $[\alpha,\beta]$, the proof is analogous to the normal case. But instead of using the generating moment function of a normal distribution, one has to use Hoeffding's Lemma: ``Let $X$ be a random variable with $\mathbb{E}[X]=0$ and support in $[\alpha,\beta]$. Then, for all $\lambda \in \mathbb{R}$, $\mathbb{E}[\e^{\lambda X}]\leq \e^{\frac{\lambda^2(\beta-\alpha)^2}{8}}$''. For a proof, see \cite[p.21]{hoeffding}.
\end{itemize}

Let us see examples of processes $b$ for which hypothesis H4 from Theorem \ref{teor1} holds:
\[\|\mu_b\|_{\leb^p(t_0,T)}+\sum_{j=1}^\infty\sqrt{\gamma_j}\,\|\psi_j\|_{\leb^p(t_0,T)}\|\eta_j\|_{\leb^p(\Omega)}<\infty\]
for some $2< p\leq\infty$.
\begin{itemize}
\item Consider $\{b(t,\omega):\, t_0=0\leq t\leq T=1\}$ a standard Brownian motion on $[0,1]$. By Exercise 5.12 in \cite[p.~206]{llibre_powell}, we know that 
\begin{equation}
\gamma_{j}=\frac{1}{\left(j-\frac12\right)^2\pi^2}, \quad \psi_{j}(t)=\sqrt{2}\,\sin\left(t\left(j-\frac12\right)\pi\right), \quad j\geq1, 
 \label{BM}
\end{equation}
and $\eta_1,\eta_2,\ldots$ are $\text{Normal}(0,1)$ and independent random variables. We check that we can choose $p=3$ (so $q=12$). We need to check that $\sum_{j=1}^{\infty} \sqrt{\gamma_j}\,\|\psi_j\|_{\leb^3(0,1)}<\infty$.
We have that
\small
\[\;\;\;\;\;\;\;\|\psi_j\|_{\leb^3(0,1)}=\sqrt{2}\,\left(\int_0^1 \left|\sin\left(t\left(j-\frac12\right)\pi\right)\right|^3\,\dif t\right)^{\frac13}=2\,\left(\frac{\sqrt{2}}{3\pi(2j-1)}\right)^{\frac13}\sim \frac{1}{j^{\frac13}},  \]
\normalsize
therefore $\sqrt{\gamma_j}\,\|\psi_j\|_{\leb^3(0,1)}\sim 1/j^{1+\frac{1}{3}}$, and since $\sum_{j=1}^\infty 1/j^{1+\frac{1}{3}}<\infty$, by comparison we obtain $\sum_{j=1}^{\infty} \sqrt{\gamma_j}\,\|\psi_j\|_{\leb^3(0,1)}<\infty$, as wanted.

\item Consider $\{b(t,\omega):\, t_0=0\leq t\leq T=1\}$ a standard Brownian bridge on $[0,1]$. By Example 5.30 in \cite[p.~204]{llibre_powell}, we know that 
\begin{equation}
 \gamma_j=\frac{1}{j^2\pi^2}, \quad \psi_j(t)=\sqrt{2}\,\sin(t j\pi), \quad j\geq1, 
 \label{BB}
\end{equation}
and $\eta_1,\eta_2,\ldots$ are $\text{Normal}(0,1)$ and independent random variables. As before, one can take $p=3$.
\end{itemize}

Finally, we want to make some comments on the hypotheses $\psi_1>0$ on $(t_0,T)$ from Theorem \ref{teor3} and Theorem \ref{teornou3}, and $\int_{t_0}^t \phi_1(s)\,\dif s\neq 0$ for all $t\in (t_0,T]$ from Theorem \ref{teorb0L} and Theorem \ref{teornoub0L}. There are some eigenvalue problems in the theory of deterministic differential equations in which one can ensure the existence of a positive eigenfunction for the greatest eigenvalue (for instance, Sturm-Liouville problems, see \cite{SL1}; the Poisson equation with Dirichlet boundary conditions, see \cite[p.452]{salsa}; etc.). We ask ourselves if the same holds with the operator (\ref{karhC}) of the Karhunen-Loève expansion.

Consider a generic Karhunen-Loève expansion (\ref{karh}), $\mu(t)+\sum_{j=1}^\infty \sqrt{\nu_j}\,\phi_j(t)\xi_j(\omega)$, of a stochastic process $X\in\leb^2(\mathcal{T}\times\Omega)$, $\mathcal{T}\subseteq\mathbb{R}$. Suppose in this particular discussion that $\nu_1$ is the greatest eigenvalue (in general it may not, because one can order the pairs of eigenvalues and eigenfunctions in the Karhunen-Loève expansion as desired). In general, one cannot ensure whether $\nu_1$ has a positive eigenfunction $\phi_1$. For example, if $\mathcal{T}=[-\pi,\pi]$, we know that $\{1\}\cup \{\cos(tj),\sin(tj):\,j\geq1\}$ is an orthonormal basis of $\leb^2([-\pi,\pi])$, so we can consider the stochastic process $X(t,\omega)=\cos(t)\xi(\omega)$, where $\xi\sim \text{Normal}(0,1)$. The eigenvalues of the associated integral operator $\mathcal{C}$ given by (\ref{karhC}) are $0$ and $1$. The eigenspace associated to $0$ is infinite dimensional, whereas the eigenspace associated to $1$ is spanned by $\phi_1(t)=\cos(t)$. This eigenfunction takes positive and negative values on $(-\pi,\pi)$.

To gain some intuition on why this phenomenon occurs, recall that in a finite dimensional setting, the greatest eigenvalue of a matrix has an eigenvector with nonnegative entries if all entries of the matrix are nonnegative, by Perron-Frobenius Theorem, see  \cite[Th.~8.2.8 in p.~526; Th.~8.3.1, p.~529]{perron}. In our setting, there are some results in the literature that give conditions under which the eigenfunction $\phi_1$ can be taken nonnegative or positive. Let $K(t,s)=\Cov[X(t),X(s)]$ be the so-called kernel of the operator $\mathcal{C}$ given by (\ref{karhC}). In \cite[p.~6]{karlin}, Theorem 1 says that, if $K(t,s)\geq0$ for all $t,s\in\mathcal{T}$, then one can choose a nonnegative eigenfunction $\phi_1$ for the greatest eigenvalue $\nu_1$. In \cite[p.~7]{karlin}, Theorem 2 gives as a consequence that, if $\mathcal{T}$ is open and $K(t,s)>0$ for all $t,s\in\mathcal{T}$, then $\nu_1$ is simple and $\phi_1$ can be chosen positive on $\mathcal{T}$. This is what happens for example with the standard Brownian motion and the standard Brownian bridge on $[0,1]$. They have covariances $K(t,s)=\min\{t,s\}$ and $K(t,s)=\min\{t,s\}-ts$, respectively, which are positive on $(t,s)\in(0,1)\times(0,1)$, therefore the eigenfunction associated to the greatest eigenvalue can be picked positive on $(0,1)$. Indeed, take $\nu_1=4/\pi^2$ and $\phi_1(t)=\sqrt{2}\sin(\pi t/2)$ for the Brownian motion (see (\ref{BM})), and $\nu_1=1/\pi^2$ and $\phi_1(t)=\sqrt{2}\sin(\pi t)$ for the Brownian bridge (see (\ref{BB})). These eigenfunctions $\phi_1$ are positive on $(0,1)$.

\section{Numerical examples}

Under the hypotheses of Theorem \ref{teor1}, Theorem \ref{teor3}, Theorem \ref{teornou1} or Theorem \ref{teornou3}, the density function (\ref{f1n}),
\footnotesize
\[ 
 f_1^N(x,t)=\int_{\mathbb{R}^{2N}} f_0\left(x\,\e^{-K_a(t,\pmb{\xi}_N)}-\int_{t_0}^t S_b(s,\pmb{\eta}_N)\e^{-K_a(s,\pmb{\xi}_N)}\,\dif s\right)f_{\pmb{\xi}_N,\pmb{\eta}_N}(\pmb{\xi}_N,\pmb{\eta}_N)\e^{-K_a(t,\pmb{\xi}_N)}\,\dif \pmb{\xi}_N\,\dif \pmb{\eta}_N,
\]
\normalsize
gives an approximation of the density function of the solution process to the non-autonomous complete linear differential equation, $x(t,\omega)$, given in (\ref{sol_ale}). Under the hypotheses of Theorem \ref{teor3} or Theorem \ref{teornou3} the approximating density (\ref{f1nou}) is obviously valid as well (it is equal to (\ref{f1n})), but we believe that (\ref{f1n}) has a simpler expression.

On the other hand, under the hypotheses of Theorem \ref{teor2}, Theorem \ref{teorb0L}, Theorem \ref{teornou2}, Theorem \ref{avo} or Theorem \ref{teornoub0L}, the density function (\ref{f1Nhomo}),
\[ f_1^N(x,t)=\int_{\mathbb{R}^{N}} f_0\left(x\,\e^{-K_a(t,\pmb{\xi}_N)}\right)f_{\pmb{\xi}_N}(\pmb{\xi}_N)\e^{-K_a(t,\pmb{\xi}_N)}\,\dif \pmb{\xi}_N, \]
approximates the density of the solution process to the non-autonomous homogeneous linear differential equation, $x(t,\omega)$, given in (\ref{xhomo}). Under the assumptions of Theorem \ref{teorb0L} and Theorem \ref{teornoub0L}, the approximation via the density (\ref{f1ncall}) is valid too (it is equal to (\ref{f1Nhomo})), but we think that (\ref{f1Nhomo}) has a simpler expression.

That is why we will make use of (\ref{f1n}) (under the hypotheses of Theorem \ref{teor1}, Theorem \ref{teor3}, Theorem \ref{teornou1} or Theorem \ref{teornou3}), and of (\ref{f1Nhomo}) (under the hypotheses of Theorem \ref{teor2}, Theorem \ref{teorb0L}, Theorem \ref{teornou2}, Theorem \ref{avo} or Theorem \ref{teornoub0L}) to perform numerical approximations of the densities of the processes (\ref{sol_ale}) and (\ref{xhomo}), respectively. 

We will use the software Mathematica\textsuperscript{\tiny\textregistered} to perform the computations. The density function (\ref{f1Nhomo}) can be numerically computed in an exact manner with the built-in function \verb|NIntegrate|. Using this function, we have drawn the shape of (\ref{f1Nhomo}) in Example \ref{Brownia}, Example \ref{nogaussia} and Example \ref{x0lip}, which deal with the case $b=0$.

The computational time spent for (\ref{f1n}) is much larger than for (\ref{f1Nhomo}), due to the additional terms that $b$ involves: there is a parametric numerical integration in the evaluation of $f_0$ in (\ref{f1n}). For $N=1$, the function \verb|NIntegrate| has been successfully used in Example \ref{complete} and Example \ref{complete2} (case $b\neq0$). However, for $N\geq2$, the function \verb|NIntegrate| has been tried, with the consequence that numerical integration becomes unfeasible. Thereby, we use the following trick: (\ref{f1n}) is an integral with respect to a density function, so it may be seen as an expectation, as we did in (\ref{f1nexp}). The Law of Large Numbers allows approximating the expectation via sampling. This is the theoretical basis of Monte Carlo simulations \cite[p.53]{xiu}. Hence, there are two possibilities to compute (\ref{f1n}) for $N\geq2$: the built-in function \verb|NExpectation| with the option \verb|Method -> "MonteCarlo"| (we used this function for $N=2$ and $N=3$ in Example \ref{complete}), or programing oneself a Monte Carlo procedure (we did so for $N=2$ in Example \ref{complete2}, with $40,000$ realizations of each one of the random variables $\xi_i$ and $\eta_i$ taking part). 

We would like to remark that there are different ways one may carry out the computations in Mathematica\textsuperscript{\tiny\textregistered} or any other software. The best built-in function or programming procedure depends on each case, and we have just presented different settings showing how one may act.

\begin{example} \label{Brownia} \normalfont
Consider the homogeneous linear differential equation ($b=0$) with $x_0\sim\text{Uniform}(1,2)$ and $a(t,\omega)=W(t,\omega)$, where $W$ is a standard Brownian motion on $[t_0,T]=[0,1]$. The Karhunen-Loève expression of $a(t,\omega)$ is
\[ a(t,\omega)=\sum_{j=1}^{\infty} \frac{\sqrt{2}}{\left(j-\frac12\right)\pi}\sin\left(t\left(j-\frac12\right)\pi\right)\xi_j(\omega), \]
where $\xi_1,\xi_2,\ldots$ are independent and $\text{Normal}(0,1)$ random variables (see Exercise 5.12 in \cite{llibre_powell}).

We want to approximate the probability density function of the solution process $x(t,\omega)$ given in (\ref{xhomo}). The assumptions of Theorem \ref{teorb0L} hold, therefore (\ref{f1Nhomo}) is a suitable approximation of the density of $x(t,\omega)$. In Figure \ref{brownia}, we can see $f_1^N(x,t)$ for $N=1$ (up left), $N=2$ (up right) and $N=3$ (down) at $t=0.5$. In Figure \ref{brownia3D}, a three dimensional plot gives $f_1^N(x,t)$ for $N=2$, $x\in\mathbb{R}$ and $0\leq t\leq 1$. 

In fact, in this case it is possible to compute the exact density function of $x(t,\omega)$. Since $a(t,\omega)=W(t,\omega)$ is a continuous process, the Lebesgue integral $\int_0^t a(s,\omega)\,\dif s$ turns out to be a Riemann integral, for each fixed $\omega\in\Omega$. Therefore it is an a.s. limit of Riemann sums. Each Riemann sum is a normal distributed random variable (because the process $a$ is Gaussian), and since the a.s. limit of normal random variables is normal again, we obtain that $Z_t:=\int_0^t a(s,\omega)\,\dif s$ is normally distributed. Its expectation is $\mathbb{E}[\int_0^t a(s,\omega)\,\dif s]=\int_0^t \mathbb{E}[a(s,\omega)]\,\dif s=0$ and its variance is $\mathbb{V}[\int_0^t a(s,\omega)\,\dif s]=\mathbb{E}[(\int_0^t a(s,\omega)\,\dif s)^2]=\mathbb{E}[\int_0^t\int_0^t a(s,\omega)a(r,\omega)\,\dif s\,\dif r]=\int_0^t\int_0^t \mathbb{E}[a(s,\omega)a(r,\omega)]\,\dif s\,\dif r=\int_0^t \int_0^t \min\{s,r\}\,\dif s\,\dif r=t^3/3$.
Using Lemma \ref{lema_abscont}, we can compute the density of $x(t,\omega)$. Indeed, in the notation of Lemma \ref{lema_abscont}, let $g(x_0,Z_t)=(x_0\e^{Z_t},Z_t)$, $D=\mathbb{R}^2$, $g(D)=\mathbb{R}^2$, $h(u,v)=(u\e^{-v},v)$ and $Jh(u,v)=\e^{-v}\neq0$. Then we compute the density of $(x_0\e^{Z_t},Z_t)$, and taking the marginal distribution,
\begin{equation}
 f_1(x,t)=\int_{\mathbb{R}} f_0(x\e^{-y})f_{\text{Normal}(0,t^3/3)}(y)\e^{-y}\,\dif y. 
 \label{f1e1}
\end{equation}
In Figure \ref{real} we present a plot of this density for $t=0.5$ (left) and a three dimensional plot for $x\in\mathbb{R}$ and $0\leq t\leq 1$ (right). In this way we can compare visually the approximation performed via $f_1^N(x,t)$ in Figure \ref{brownia} and Figure \ref{brownia3D}.

To assess analytically the accuracy of the approximations $f_1^N(x,t)$ with respect to the exact probability density function $f_1(x,t)$ given in (\ref{f1e1}), in Table \ref{table1} we collect the values of the errors for different orders of truncation $N$ at $t=0.5$. We observe that the error decreases as $N$ increases, so this agrees with our theoretical findings.

\begin{figure}[H]
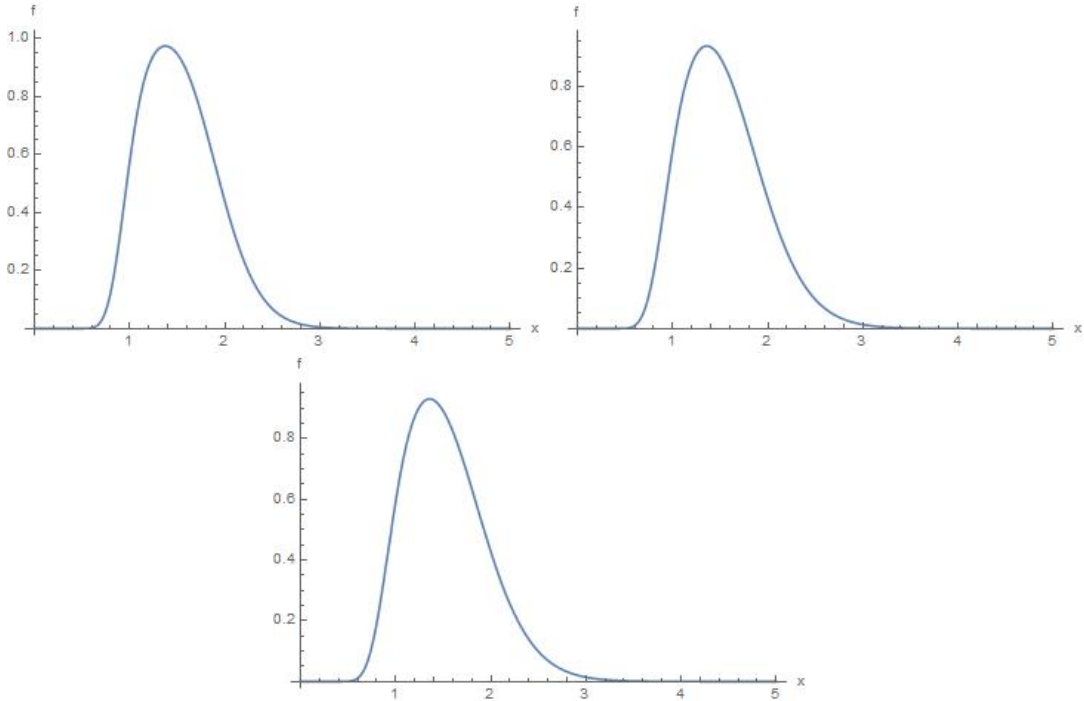

  \begin{center}
    \includegraphics[width=7cm]{browniaN1.jpg}
		\includegraphics[width=7cm]{browniaN2.jpg}
		\includegraphics[width=7cm]{browniaN3.jpg}
    \caption{Density $f_1^N(x,t)$ for $N=1$ (up left), $N=2$ (up right) and $N=3$ (down) at the point $t=0.5$. Example \ref{Brownia}.}
		\label{brownia}
    \end{center}
  \end{figure}
	
\begin{figure}[H]
  \begin{center}
    \includegraphics[width=6cm]{brownia3D.jpg}
    \caption{Density $f_1^N(x,t)$ for $N=2$, $x\in\mathbb{R}$ and $0\leq t\leq 1$. Example \ref{Brownia}.}
		\label{brownia3D}
    \end{center}
  \end{figure}

\begin{figure}[H]
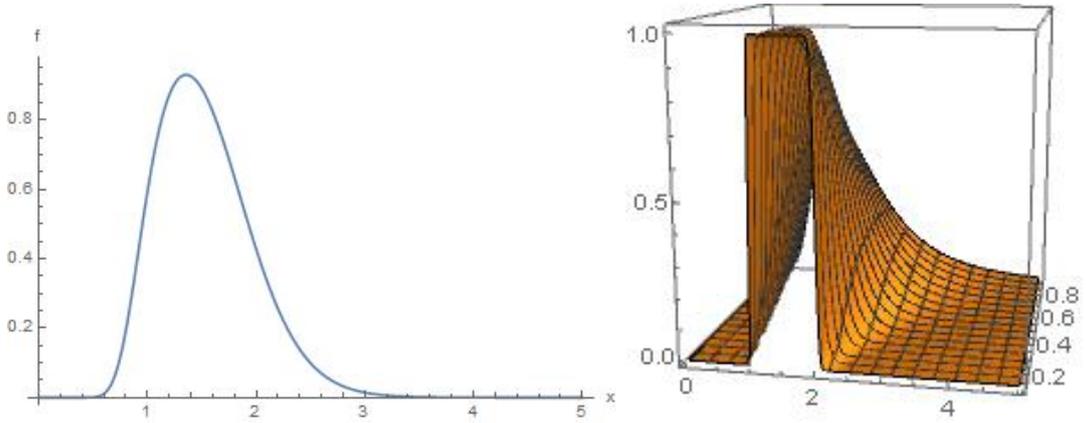

  \begin{center}
	\includegraphics[width=8cm]{real05.jpg}
    \includegraphics[width=6cm]{real3D.jpg}
    \caption{Density $f_1(x,t)$ for $t=0.5$ (left) and three dimensional plot for $x\in\mathbb{R}$ and $0\leq t\leq 1$ (right). Example \ref{Brownia}.}
		\label{real}
    \end{center}
  \end{figure}
	
\begin{table}[H]
\begin{center}
\begin{tabular}{|c|c|} \hline
$N$ & $\|f_1^N(x,0.5)-f_1(x,0.5)\|_{\leb^\infty(\mathbb{R})}$ \\ \hline
$1$ & $0.0687343$ \\ \hline
$2$ & $0.00743475$ \\ \hline
$3$ & $0.00332728$ \\ \hline
\end{tabular}
\caption{Error with respect to the exact density function $f_1(x,t)$ given by \eqref{f1e1} for $N=1$, $N=2$ and $N=3$, at $t=0.5$. Example \ref{Brownia}.}
\label{table1}
\end{center}
\end{table}
	
\end{example}

\begin{example} \label{nogaussia} \normalfont
Again, consider the homogeneous linear differential equation ($b=0$) with $x_0\sim\text{Uniform}(1,2)$, but now $a$ will not be a Gaussian process. We consider 
\[ a(t,\omega)=\sum_{j=1}^\infty \frac{\sqrt{2}}{j}\sin(j\pi t)\xi_j(\omega), \]
where $[t_0,T]=[0,1]$ and $\xi_1,\xi_2,\ldots$ are independent random variables with common density function $f_{\xi_1}(\xi_1)=\sqrt{2}/(\pi(1+\xi_1^4))$.
These random variables have zero expectation and unit variance, so we have a proper expansion for $a$ (its Karhunen-Loève expansion).

We want to approximate the probability density function of the solution process $x(t,\omega)$ given in (\ref{xhomo}). The assumptions of Theorem \ref{teorb0L} hold, because $f_{\xi_1}$ is a Lipschitz function on $\mathbb{R}$ (hypothesis H3) and $\phi_1(t)=\sqrt{2}\,\sin(t\pi)>0$ on $(0,1)$ (hypothesis H4). Hence, (\ref{f1Nhomo}) is a suitable approximation of the density of the process $x(t,\omega)$ given in (\ref{xhomo}). In Figure \ref{nog}, we can see $f_1^N(x,t)$ for $N=1$ (up left), $N=2$ (up right) and $N=3$ (down) at $t=0.7$. In Figure \ref{nog3D}, a three dimensional plot gives $f_1^N(x,t)$ for $N=2$, $x\in\mathbb{R}$ and $0\leq t\leq 1$. In Table \ref{table2} we have written the difference between two consecutive orders of truncation $N$ at $t=0.7$. Observe that these differences decrease to $0$, which agrees with the theoretical results.

\begin{figure}[H]
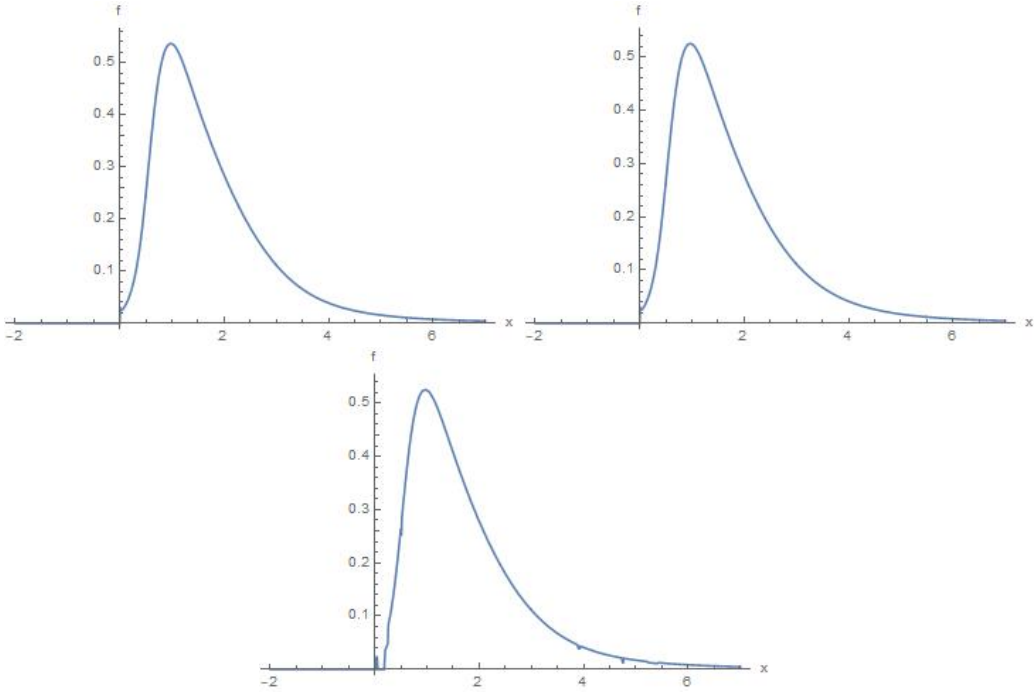

  \begin{center}
    \includegraphics[width=6.7cm]{nogN1.jpg}
		\includegraphics[width=6.7cm]{nogN2.jpg}
		\includegraphics[width=6.7cm]{nogN3.jpg}
    \caption{Density $f_1^N(x,t)$ for $N=1$ (up left), $N=2$ (up right) and $N=3$ (down) at the point $t=0.7$. Example \ref{nogaussia}.}
		\label{nog}
    \end{center}
  \end{figure}
	
\begin{table}[H]
\begin{center}
\begin{tabular}{|c|c|} \hline
$N$ & $\|f_1^N(x,0.7)-f_1^{N+1}(x,0.7)\|_{\leb^\infty(\mathbb{R})}$ \\ \hline
$1$ & $0.010764$ \\ \hline
$2$ & $0.000177$ \\ \hline
\end{tabular}
\caption{Error between two consecutive orders of truncation $N$ and $N+1$, for $N=1$ and $N=2$, at $t=0.7$. Example \ref{nogaussia}.}
\label{table2}
\end{center}
\end{table}
	
\begin{figure}[H]
  \begin{center}
    \includegraphics[width=6cm]{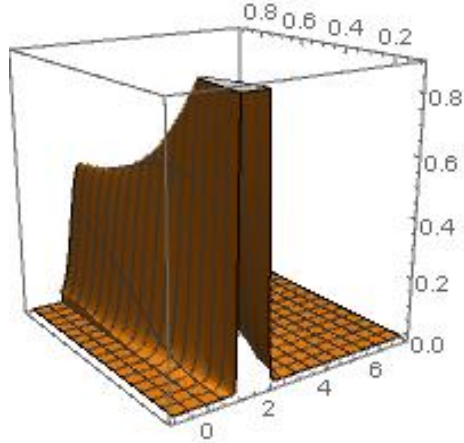}
    \caption{Density $f_1^N(x,t)$ for $N=2$, $x\in\mathbb{R}$ and $0\leq t\leq 1$. Example \ref{nogaussia}.}
		\label{nog3D}
    \end{center}
  \end{figure}
	 
\end{example}

\begin{example} \label{x0lip} \normalfont

Consider the homogeneous linear differential equation ($b=0$) with $x_0\sim\text{Beta}(5,6)$, $[t_0,T]=[0,1]$ and
\[ a(t,\omega)=-1+\sum_{j=1}^\infty \frac{\sqrt{2}}{j}\sin(j\pi t)\xi_j(\omega), \]
where $\xi_1,\xi_2,\ldots$ are independent random variables with uniform distribution on $(-\sqrt{3},\sqrt{3})$ (they have zero expectation and unit variance). Notice that $a$ is not a Gaussian process.

The hypotheses of Theorem \ref{teor2} hold, because the density function of a $\text{Beta}(5,6)$ random variable is Lipschitz on $\mathbb{R}$ and $\mathbb{E}[\e^{c\,K_a(t,\pmb{\xi}_N)}]\leq C$ for all $n\geq1$, $t\in [t_0,T]$ and $c\in \mathbb{R}$ (see Subsection \ref{comments}). Then (\ref{f1Nhomo}) approximates the density of the process $x(t,\omega)$ given in (\ref{xhomo}). 

In Figure \ref{lip}, we can see $f_1^N(x,t)$ for $N=1$ (up left), $N=2$ (up right), $N=3$ (down left) and $N=4$ (down right) at $t=0.3$. In Figure \ref{lip3D}, a three dimensional plot gives $f_1^N(x,t)$ for $N=2$, $x\in\mathbb{R}$ and $0\leq t\leq 1$. To assess analytically the convergence, in Table \ref{table3} we show the difference between two consecutive orders of truncation $N$ at $t=0.3$. We observe that these differences decrease to $0$, which goes in the direction of our theoretical results.

\begin{figure}[H]
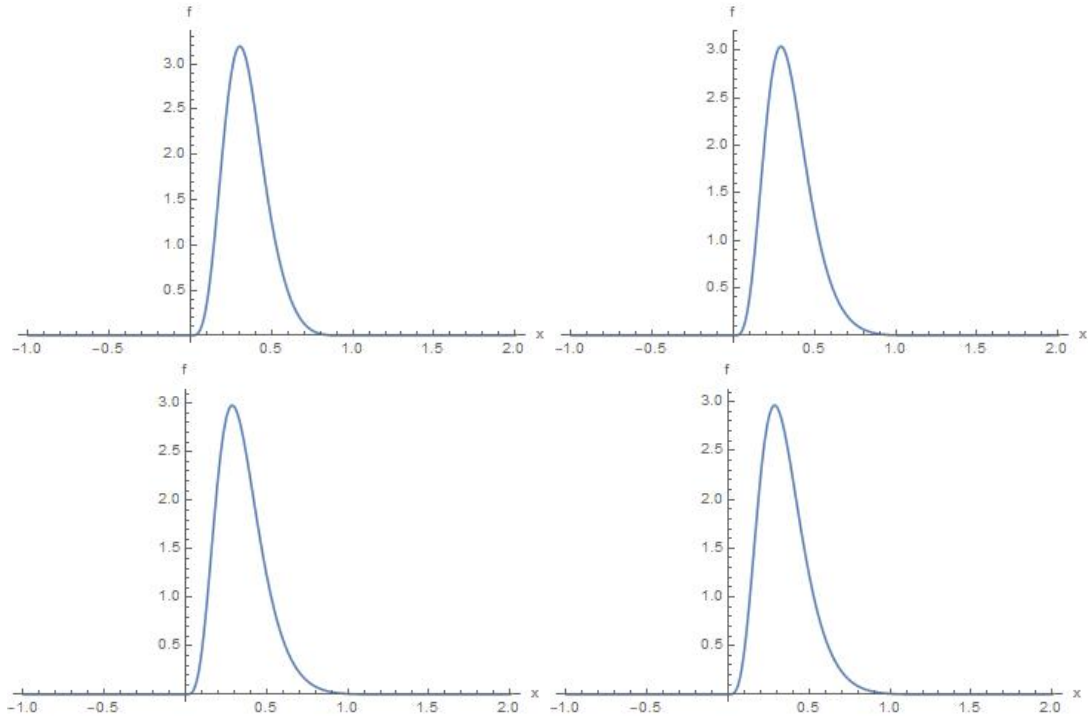

  \begin{center}
    \includegraphics[width=7cm]{lipN1.jpg}
		\includegraphics[width=7cm]{lipN2.jpg}
		\includegraphics[width=7cm]{lipN3.jpg}
		\includegraphics[width=7cm]{lipN4.jpg}
    \caption{Density $f_1^N(x,t)$ for $N=1$ (up left), $N=2$ (up right), $N=3$ (down left) and $N=4$ (down right) at the point $t=0.3$. Example \ref{x0lip}.}
		\label{lip}
    \end{center}
  \end{figure}
	
\begin{table}[H]
\begin{center}
\begin{tabular}{|c|c|} \hline
$N$ & $\|f_1^N(x,0.3)-f_1^{N+1}(x,0.3)\|_{\leb^\infty(\mathbb{R})}$ \\ \hline
$1$ & $0.225333$ \\ \hline
$2$ & $0.0799602$ \\ \hline
$3$ & $0.0203143$ \\ \hline
\end{tabular}
\caption{Error between two consecutive orders of truncation $N$ and $N+1$, for $N=1$, $N=2$ and $N=3$, at $t=0.3$. Example \ref{x0lip}.}
\label{table3}
\end{center}
\end{table}
	
\begin{figure}[H]
  \begin{center}
    \includegraphics[width=6cm]{lip3D.jpg}
    \caption{Density $f_1^N(x,t)$ for $N=2$, $x\in\mathbb{R}$ and $0\leq t\leq 1$. Example \ref{x0lip}.}
		\label{lip3D}
    \end{center}
  \end{figure}
	 
\end{example}

\begin{example} \label{complete} \normalfont
Consider a complete linear differential equation with $x_0\sim\text{Normal}(0,1)$, $a(t,\omega)=W(t,\omega)$ a standard Brownian motion on $[0,1]$ and $b(t,\omega)$ a standard Brownian bridge on $[0,1]$:
\[ a(t,\omega)=\sum_{j=1}^{\infty} \frac{\sqrt{2}}{\left(j-\frac12\right)\pi}\sin\left(t\left(j-\frac12\right)\pi\right)\xi_j(\omega), \quad  b(t,\omega)=\sum_{i=1}^{\infty} \frac{\sqrt{2}}{i\pi}\sin\left(ti\pi\right)\eta_i(\omega), \]
where $\xi_1,\xi_2,\ldots,\eta_1,\eta_2,\ldots$ are independent and $\text{Normal}(0,1)$ distributed random variables (see Example 5.30 and Exercise 5.12 in \cite[p.~204 and p.~216, resp.]{llibre_powell}).

The hypotheses of Theorem \ref{teor1} hold (see Subsection \ref{comments}). Therefore, the density $f_1^N(x,t)$ given in (\ref{f1n}) is an approximation of the density function of the process $x(t,\omega)$ given in (\ref{sol_ale}). 

In Figure \ref{comp} we have plotted $f_1^N(x,t)$ for $N=1$, $N=2$ and $N=3$ at $t=0.5$. 


\begin{figure}[H]
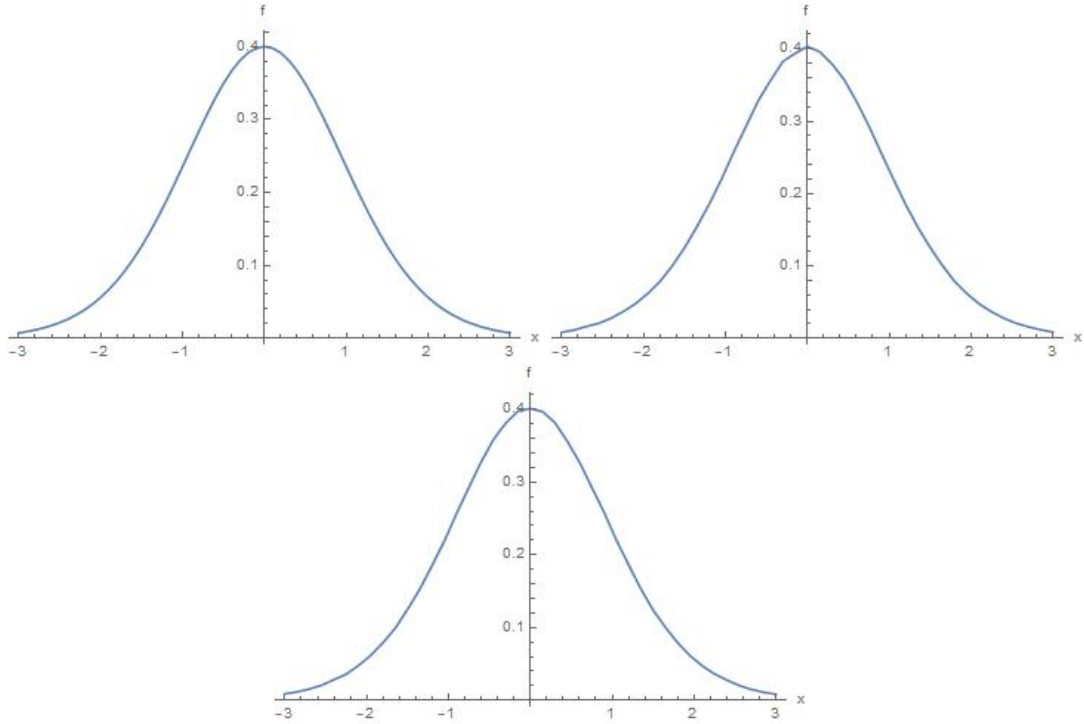

  \begin{center}
    \includegraphics[width=7cm]{compN1.jpg}
		\includegraphics[width=7cm]{compN2.jpg}
		\includegraphics[width=7cm]{compN3.jpg}
    \caption{Density $f_1^N(x,t)$ for $N=1$ (up left), $N=2$ (up right) and $N=3$ (down) at the point $t=0.5$. Example \ref{complete}.}
		\label{comp}
    \end{center}
  \end{figure}

\end{example}

\begin{example} \label{complete2} \normalfont
Consider a complete linear differential equation with initial condition $x_0\sim\text{Gamma}(4,9)$ ($4$ is the shape and $9$ is the rate), domain $[t_0,T]=[0,1]$,
\[ a(t,\omega)=\sum_{j=1}^{\infty} \frac{\sqrt{2}}{j^3}\sin\left(tj\pi\right)\xi_j(\omega), \quad  b(t,\omega)=\sum_{i=1}^{\infty} \frac{\sqrt{2}}{i^4+6}\sin\left(ti\pi\right)\eta_i(\omega), \]
where $\xi_1,\xi_2,\ldots,\eta_1,\eta_2,\ldots$ are independent with distribution $\xi_j\sim\text{Uniform}(-\sqrt{3},\sqrt{3})$ and $\eta_j\sim\text{Normal}(0,1)$, $j\geq1$.

The assumptions of Theorem \ref{teor1} hold (see Subsection \ref{comments}). Hence, the density $f_1^N(x,t)$ from (\ref{f1n}) approximates the density function of the stochastic process $x(t,\omega)$ given in (\ref{sol_ale}). 

In Figure \ref{comp2} we have represented $f_1^N(x,t)$ for $N=1$ and $N=2$ at $t=0.4$. 

\begin{figure}[H]
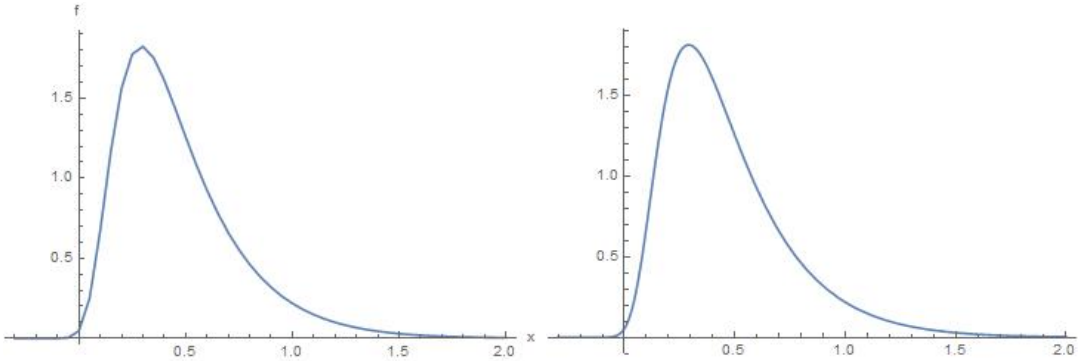

  \begin{center}
    \includegraphics[width=7cm]{comp2N1.jpg}
		\includegraphics[width=7cm]{comp2N2.jpg}
    \caption{Density $f_1^N(x,t)$ for $N=1$ (left) and $N=2$ (right) at the point $t=0.4$. Example \ref{complete2}.}
		\label{comp2}
    \end{center}
  \end{figure}

\end{example}

\section{Conclusions}

In this paper we have determined approximations for the probability density function of the solution to the randomized non-autonomous complete linear differential equation. This solution is a stochastic process expressed by means of Lebesgue integrals of the data stochastic processes, therefore its probability density function cannot be obtained in an exact manner and approximations for it are required.

The main ideas of this paper can be summarized as follows. Using Karhunen-Loève expansions of the data stochastic processes and truncating them, we obtained a truncation of the solution stochastic process. The Random Variable Transformation technique permitted us to obtain the exact probability density function of the truncation process, which, intuitively, approximates the density function of the solution stochastic process. Theorems \ref{teor1}, \ref{teor2}, \ref{teor3} and \ref{teorb0L} give the conditions under which the approximations constructed via these truncations converge uniformly, while Theorems \ref{teornou1}, \ref{teornou2}, \ref{avo}, \ref{teornou3} and \ref{teornoub0L} give conditions for pointwise convergence. Theorem \ref{teor2}, Theorem \ref{teorb0L}, Theorem \ref{teornou2}, Theorem \ref{avo} and Theorem \ref{teornoub0L} deal with the non-autonomous homogeneous linear differential equation, which give interesting and particular results when there is no source term in the random differential equation.

It is remarkable that, depending on the way the Random Variable Transformation technique is applied (which variable is essentially isolated when computing the inverse of the transformation mapping), we get different but equivalent expressions for the probability density function of the truncation process. This permitted us to have different hypotheses (theorems) under which the approximating density functions converge. Hence, the generality achieved in terms of applications is notable, as our numerical examples illustrated. 

In the numerical experiments we dealt with both Gaussian and non-Gaussian data stochastic processes for which their Karhunen-Loève expansion is available. It was evinced that the convergence to the probability density function of the solution process is achieved quickly.

\section*{Acknowledgements}
This work has been  supported by the Spanish Ministerio de Econom\'{i}a y Competitividad grant MTM2013-41765-P.

\section*{Conflicts of interest} None.

\end{document}